\documentclass[a4paper,reqno,10pt]{amsart}
\makeatletter
\newcommand*{\rom}[1]{\expandafter\@slowromancap\romannumeral #1@}
\makeatother
\usepackage{epsf,graphicx,epsfig}
\usepackage{amscd}
\usepackage{amsmath,latexsym,amssymb,amsthm}
\usepackage{fdsymbol}
\usepackage[nospace,noadjust]{cite}
\usepackage{textcomp}
\usepackage{setspace,cite}
\usepackage{mathrsfs,amsmath}
\usepackage{chemarrow}
\usepackage{lscape,fancyhdr,fancybox}
\usepackage{stmaryrd}
\usepackage[all,cmtip]{xy}
\usepackage{tikz-cd}
\usepackage{cancel}
\usepackage[usestackEOL]{stackengine}
\usetikzlibrary{shapes,arrows,decorations.markings}
\usepackage{graphicx}

\usepackage{color}
\usepackage{bbm, dsfont}
\usepackage{xcolor}
\usepackage{url}
\usepackage{enumerate}
\usepackage[mathscr]{euscript}

  \theoremstyle{plain}
\swapnumbers
    \newtheorem{thm}{Theorem}[section]
    \newtheorem{prop}[thm]{Proposition}

    \newtheorem{subsec}[thm]{}
\theoremstyle{definition}
    \newtheorem{defn}[thm]{Definition}
        \newtheorem{remark}[thm]{Remark}
    \newtheorem{exam}[thm]{Example}

\theoremstyle{remark}

\title{}
\author{}
\date{}
\usepackage{amssymb}

\usepackage{hyperref}
\hypersetup{
colorlinks,
citecolor=blue,
filecolor=black,
linkcolor=blue,
urlcolor=black
}

\usepackage[left=25mm,right=25mm,top=25mm,bottom=25mm,paper=a4paper]{geometry}

\begin{document}


\title[Factorizations, classifying complements problem and deformation maps]{Factorizations, classifying complements problem and deformation maps for Lie-Yamaguti algebras}



\author{Apurba Das}
\address{Department of Mathematics,
Indian Institute of Technology, Kharagpur 721302, West Bengal, India.}
\email{apurbadas348@gmail.com, apurbadas348@maths.iitkgp.ac.in}

\begin{abstract}
A Lie-Yamaguti algebra is a non-associative algebraic structure that generalizes both Lie algebras and Lie triple systems. We first consider the factorization problem for Lie-Yamaguti algebras that essentially related to the bicrossed product of Lie-Yamaguti algebras. Next, given an inclusion $\mathfrak{g} \subset E$ of Lie-Yamaguti algebras and a strong $\mathfrak{g}$-complement $\mathfrak{h}$, we describe and classify all $\mathfrak{g}$-complements in $E$. In particular, we show that any other $\mathfrak{g}$-complement in $E$ is isomorphic to $\mathfrak{h}$ by some deformation map $r: \mathfrak{h} \rightarrow \mathfrak{g}$. Despite this importance, it turns out that a deformation map generalizes homomorphisms, derivations, crossed homomorphisms and relative Rota-Baxter operators on Lie-Yamaguti algebras. We define the cohomology of a deformation map unifying the cohomologies of all the operators mentioned above. Finally, we provide a Maurer-Cartan characterization and construct the governing $L_\infty$-algebra of a deformation map $r$ that controls the linear deformations of $r$.
\end{abstract}

\maketitle

\medskip

\medskip

\begin{center}
{\sf 2020 MSC classification.} 17A30, 17A36, 17A40, 17B40.\\
     {\sf Keywords.} Lie-Yamaguti algebras, Factorization problem, Classifying complements problem, Deformation maps, Cohomology.
\end{center}

\thispagestyle{empty}

\tableofcontents

\vspace{0.2cm}

\section{Introduction}\label{section-1}
\subsection{Lie triple systems and Lie-Yamaguti algebras} The notion of {\em Lie triple systems} can be traced back to Cartan's work on symmetric spaces. Such triple systems were first formulated in their algebraic form and named as Lie triple systems by Jacobson \cite{jacobson}. Subsequently, Lister \cite{lister} established a structure theory of Lie triple systems. Later, various studies on Lie triple systems were developed in connection with other algebraic structures. In a few years, Nomizu \cite{nomizu} considered the tangent space of a reductive homogeneous space at a point together with the torsion and curvature tensors associated with a certain canonical affine connection. The underlying algebraic structure was considered by Yamaguti \cite{yamaguti0, yamaguti} by the name of {\em general Lie triple systems}. This new structure is given by a binary operation and a ternary operation that generalizes both Lie algebras and Lie triple systems. In his papers, Yamaguti also considered representations and cohomology of general Lie triple systems. Various other studies on this new structure were subsequently made by Sagle \cite{sagle,sagle2} and Kikkawa \cite{kikkawa}. In \cite{kinyon}, Kinyon and Weinstein observed that general Lie triple systems, which they called {\em Lie-Yamaguti algebras} in their paper, can be obtained from Leibniz algebras. They also examine the enveloping Lie algebras of Lie-Yamaguti algebras. Irreducible Lie-Yamaguti algebras and their relations with orthogonal Lie algebras are explored by Benito et al. \cite{benito0,benito1}. Recently, Takahashi \cite{takahashi} considered modules over quandles using representations of Lie-Yamaguti algebras. Various operators (e.g., homomorphisms, automorphisms,  derivations, crossed homomorphisms, and Rota-Baxter operators) on Lie-Yamaguti algebras and related Yamaguti-type algebraic structures have also been studied recently in connection with the cohomology theory \cite{goswami,meyberg,mondal-saha,sheng-zhao,sun-chen,zhao-qiao,zhao-qiao2,zhao-xu-qiao,das-ms}. Many important results on Lie algebras and Lie triple systems are still developing in the context of Lie-Yamaguti algebras.

\subsection{Factorization problem, classifying complements problem and deformation maps}
The {\em factorization problem} is a very traditional question that can be asked for various algebraic structures. In the context of Lie algebras, this problem asks for describing and classifying all Lie algebra structures on the direct sum $E= \mathfrak{g} \oplus \mathfrak{h}$ such that $\mathfrak{g}$ and $\mathfrak{h}$ are both Lie subalgebras of $E$. The answer to this problem can be traced back to the bicrossed products of $\mathfrak{g}$ and $\mathfrak{h}$ \cite{majid}. The {\em classifying complements problem} is the converse of the factorization problem, and this was first considered by Agore and Militaru \cite{agore-lie} in the context of Lie algebras. Later, it was generalized to the category of other algebraic structures \cite{agore-ass,agore-leibniz}. In the context of Lie algebras, this problem concerns identifying and classifying certain Lie subalgebras that complement a given Lie subalgebra inside a larger Lie algebra. Let $\mathfrak{g} \subset E$ be an inclusion of Lie algebras. A $\mathfrak{g}$-complement in $E$ is a Lie subalgebra $\mathfrak{h} \subset E$ such that $ E = \mathfrak{g} + \mathfrak{h}$ and $\mathfrak{g} \cap \mathfrak{h} = \{ 0 \}$. Then the classifying complements problem asks to find and classify all $\mathfrak{g}$-complements in $E$. It is known that if $\mathfrak{h}$ is a $\mathfrak{g}$-complement in $E$ then $E$ is isomorphic to the bicrossed product $\mathfrak{g} \Join \mathfrak{h}$ associated with a matched pair of $\mathfrak{g}$ and $\mathfrak{h}$ (called the canonical matched pair). 
It has been observed in \cite{agore-lie} that given a $\mathfrak{g}$-complement $\mathfrak{h}$, any other $\mathfrak{g}$-complement in $E$ can be obtained from $\mathfrak{h}$ by some {\em deformation map} $r: \mathfrak{h} \rightarrow \mathfrak{g}$ in the canonical matched pair. The notion of deformation map also attracts independent interest as it generalizes Lie algebra homomorphisms, derivations, crossed homomorphisms, Rota-Baxter operators of weight $0$ and $1$. In \cite{jiang-sheng-tang}, Jiang, Sheng and Tang discovered the cohomology of a deformation map $r$ (in the context of Lie algebras) unifying the well-known cohomologies of all the operators mentioned above. Additionally, they have constructed the governing algebra of a deformation map $r$ that controls the linear deformations of the operator $r$.

\subsection{Main results and the outline of the paper} This paper is primarily devoted to studying the factorization problem, the classifying complements problem, and deformation maps in the context of Lie-Yamaguti algebras. We begin by recalling the notion of a matched pair of Lie-Yamaguti algebras considered in \cite{zhao-qiao2}. Subsequently, we describe the corresponding bicrossed product and the associated factorization problem. Since a Lie-Yamaguti algebra has a ternary operation (that disturbs the question asked in the traditional factorization problem), we need some `strong' assumption in the factorization problem with respect to the trilinear operation to get its correspondence with the bicrossed product of Lie-Yamaguti algebras (cf. Theorem \ref{thm-factorization}). Next, we introduce the notion of a deformation map in a given matched pair $(\mathfrak{g}, \mathfrak{h})$ of Lie-Yamaguti algebras. A deformation map unifies homomorphisms, derivations, crossed homomorphisms, and relative Rota-Baxter operators on Lie-Yamaguti algebras \cite{sun-chen,zhao-qiao,zhao-qiao2,zhao-xu-qiao}. We observe that a linear map $r: \mathfrak{h} \rightarrow \mathfrak{g}$ is a deformation map if and only if its graph ${Graph} (r)$ is a Lie-Yamaguti subalgebra of the associated bicrossed product. We also show that a deformation map $r$ induces a new Lie-Yamaguti algebra structure on the vector space $\mathfrak{h}$ (we denote this induced structure by $\mathfrak{h}_r$). Moreover, there is a representation of this induced Lie-Yamaguti algebra $\mathfrak{h}_r$ on the vector space $\mathfrak{g}$. Then we move to the classifying complements problem. Given an inclusion $\mathfrak{g} \subset E$ of Lie-Yamaguti algebras and a `strong' $\mathfrak{g}$-complement $\mathfrak{h}$, there is a matched pair $(\mathfrak{g}, \mathfrak{h})$ of Lie-Yamaguti algebras (called the canonical matched pair) for which $E \cong \mathfrak{g} \Join \mathfrak{h}$ as Lie-Yamaguti algebras. Subsequently, we show that, if $r: \mathfrak{h} \rightarrow \mathfrak{g}$ is a deformation map in the canonical matched pair then ${Graph} (r)$ is also a $\mathfrak{g}$-complement in $E$. Moreover, any $\mathfrak{g}$-complement in $E$ arises in this way (cf. Theorem \ref{thm-ccp}). We consider an equivalence relation $\sim$ on the set $\mathcal{DM} (\mathfrak{g}, \mathfrak{h})$ of all deformation maps and show that the set $\mathcal{DM} (\mathfrak{g}, \mathfrak{h}) / \sim$ has a bijection with the set of all isomorphism classes of $\mathfrak{g}$-complements in $E$ (cf. Theorem \ref{thm-ccp2}). We also discuss the factorization problem and the classifying complements problem to the particular context of Lie triple systems.

\medskip

Despite the importance of deformation maps in the classifying complements problem, it is worth meaningful to study deformation maps independently in the context of Lie-Yamaguti algebras. We first define the cohomology of a deformation map that unifies the existing cohomologies of various operators on Lie-Yamaguti algebras \cite{sun-chen,zhao-qiao,zhao-qiao2,zhao-xu-qiao}. Next, given a matched pair of Lie-Yamaguti algebras, we construct an $L_\infty$-algebra whose {Maurer-Cartan elements} are precisely deformation maps (cf. Theorem \ref{thm-mc-char}). In particular, we recover the Maurer-Cartan characterization of Rota-Baxter operators of weight $0$ \cite{zhao-qiao} and obtain characterizations of other operators. Finally, for a given deformation map $r$, we construct a new $L_\infty$-algebra (called the governing algebra that is obtained by twisting the corresponding Maurer-Cartan element) that governs the linear deformations of the operator $r$ (cf. Theorem \ref{thm-governing}).


\medskip

The paper is organized as follows. In Section \ref{section-2}, we recall Lie-Yamaguti algebras and their cohomology with coefficients in a representation. Then matched pair of Lie-Yamaguti algebras, associated bicrossed product, and the factorization problem have been considered in Section \ref{section-3}. We introduce and study deformation maps in Section \ref{section-4}. In particular, here we consider the classifying complements problem and show that the set of all isomorphism classes of complements has a bijection with the set of all equivalence classes of deformation maps. Finally, in Section \ref{section-5}, we define the cohomology and the governing $L_\infty$-algebra of a deformation map by unifying the cohomologies of various well-known operators on Lie-Yamaguti algebras.

\medskip

All vector spaces, (multi)linear maps, unadorned tensor products, and wedge products are over a field ${\bf k}$ of characteristics $0.$

\section{Lie-Yamaguti algebras}\label{section-2} 

This section provides a brief review of the necessary background on Lie-Yamaguti algebras, including their representations and cohomology. For more details, we suggest the references \cite{kinyon,takahashi,yamaguti0,yamaguti,zhang-li,zhao-qiao}.

\begin{defn}
    A {\bf Lie-Yamaguti algebra} is a triple $(\mathfrak{g}, [~,~], \llbracket ~, ~, ~ \rrbracket)$ consisting of a vector space $\mathfrak{g}$ equipped with linear maps $[~,~] : \wedge^2 \mathfrak{g} \rightarrow \mathfrak{g}$ and $\llbracket ~, ~, ~ \rrbracket : \wedge^2 \mathfrak{g} \otimes \mathfrak{g} \rightarrow \mathfrak{g}$ that satisfy the following set of identities:
    \begin{equation}
         [ [x, y], z] + [[y, z], x] + [[ z, x], y] + \llbracket x, y, z \rrbracket + \llbracket y, z, x \rrbracket + \llbracket z, x, y \rrbracket = 0,
         \end{equation}
         \begin{equation}
        \llbracket [x, y], z, w \rrbracket + \llbracket [y, z], x , w \rrbracket + \llbracket [z, x], y, w \rrbracket = 0,
         \end{equation}
         \begin{equation}\label{eqn-3}
         \llbracket x, y , [z, w ] \rrbracket = [ \llbracket x, y, z \rrbracket,  w] + [z, \llbracket x, y, w \rrbracket ],
          \end{equation}
         \begin{equation}
        \llbracket x, y, \llbracket z, w, t \rrbracket \rrbracket = \llbracket \llbracket x, y, z \rrbracket, w, t \rrbracket + \llbracket z, \llbracket x, y, w \rrbracket, t \rrbracket + \llbracket z, w, \llbracket x, y, t \rrbracket \rrbracket, \label{ly4}
    \end{equation}
    for all $x, y, z, w, t \in \mathfrak{g}$. We often denote a Lie-Yamaguti algebra as above simply by the space $\mathfrak{g}$ when the defining operations are clear from the context.
\end{defn}

There are three important classes of Lie-Yamaguti algebras, which are described below.

\begin{exam}\label{exam-lie-ly}
    Let $(\mathfrak{g}, [~,~])$ be a Lie algebra. If we take $\llbracket ~, ~, ~ \rrbracket$ to be the zero map, then $(\mathfrak{g}, [~,~], \llbracket ~, ~, ~ \rrbracket)$ is trivially a Lie-Yamaguti algebra. Instead, if we set 
    \begin{align}\label{lie-ly}
        \llbracket x, y, z \rrbracket := [ [ x, y], z], \text{ for } x, y, z \in \mathfrak{g}
    \end{align}
    then $(\mathfrak{g}, [~,~], \llbracket ~, ~, ~ \rrbracket)$ is also a Lie-Yamaguti algebra.
\end{exam}

\begin{exam}
    A {\bf Lie triple system} is a pair $(\mathfrak{g}, \llbracket ~, ~, ~ \rrbracket)$ of a vector space $\mathfrak{g}$ with a linear map $\llbracket ~, ~, ~ \rrbracket : \wedge^2 \mathfrak{g} \otimes \mathfrak{g} \rightarrow \mathfrak{g}$ that satisfy
    \begin{align*}
         \llbracket x, y, z \rrbracket + \llbracket y, z, x \rrbracket + \llbracket z, x, y \rrbracket = 0, \text{ for all } x, y, z \in \mathfrak{g}
    \end{align*}
    and the identity (\ref{ly4}). It follows that a Lie triple system $(\mathfrak{g}, \llbracket ~, ~, ~ \rrbracket)$ can be regarded as a Lie-Yamaguti algebra with the trivial bracket $[~,~]$.
\end{exam}

\begin{exam}
    Let $(\mathfrak{l}, \circ )$ be a left Leibniz algebra. That is, $\mathfrak{l}$ is a vector space endowed with a linear map $\circ : \mathfrak{l} \otimes \mathfrak{ l} \rightarrow \mathfrak{l}$ satisfying
    \begin{align*}
         x \circ (y \circ z) = (x \circ y) \circ z + y \circ (x \circ z), \text{ for } x, y, z \in \mathfrak{l}.
    \end{align*}
    Then the triple $(\mathfrak{l}, [~,~], \llbracket ~, ~, ~ \rrbracket)$ is a Lie-Yamaguti algebra, where
    \begin{align}\label{leib-ly}
        [x, y] := x \circ y - y \circ x \quad \text{ and } \quad \llbracket x, y, z \rrbracket := - (x \circ y) \circ z, \text{ for } x, y, z \in \mathfrak{l}.
    \end{align}
\end{exam}

\medskip

Recall that a {\bf representation} of a Lie-Yamaguti algebra $(\mathfrak{g}, [~,~ ], \llbracket ~, ~, ~ \rrbracket)$ is a vector space $V$ endowed with two linear maps $\rho : \mathfrak{g} \rightarrow \mathrm{End}(V),  \!~ x \mapsto \rho_x$ and $\mu : \mathfrak{g} \otimes \mathfrak{g} \rightarrow \mathrm{End}(V), ~ x \otimes y \mapsto \mu (x, y)$ that satisfy the following conditions:
\begin{equation*}
    \mu ([x, y], z) = \mu (x, z) ~ \! \rho_y - \mu (y, z) ~ \! \rho_x,
    \end{equation*}
    \begin{equation*}
     \mu (x, [y, z]) = \rho_y ~ \! \mu (x, z) - \rho_z ~ \! \mu (x, y), 
      \end{equation*}
    \begin{equation*}
      \rho_{ \llbracket  x, y, z \rrbracket } =  D_{\rho, \mu} (x, y) ~ \! \rho_z - \rho_z ~ \! D_{\rho, \mu} (x, y),
       \end{equation*}
    \begin{equation*}
       \mu(z, w) \mu (x, y) - \mu (y, w) \mu(x, z) - \mu (x, \llbracket  y, z, w \rrbracket) + D_{\rho, \mu} (y, z) \mu (x, w) = 0,
        \end{equation*}
    \begin{equation*}
        \mu ( \llbracket x, y, z \rrbracket, w) + \mu (z, \llbracket x, y, w \rrbracket) = D_{\rho, \mu} (x, y)  \mu(z, w) - \mu(z, w) D_{\rho, \mu} (x, y),
\end{equation*}
for all $x, y, z, w \in \mathfrak{g}$.
Here the linear map $D_{\rho, \mu} : \wedge^2 \mathfrak{g} \rightarrow \mathrm{End}(V)$ is given by
\begin{align*}
    D_{\rho, \mu} (x, y) :=  \rho_x  \rho_y - \rho_y \rho_x - \rho_{[x, y]} - \mu (x, y) + \mu(y, x), \text{ for } x, y \in \mathfrak{g}.
\end{align*}
We denote a representation as above by the triple $(V; \rho, \mu)$ or simply by $V$ when the structure maps $\rho$ and $\mu$ are understood.

\begin{exam}
    Let $(\mathfrak{g}, [~,~], \llbracket ~, ~, ~ \rrbracket)$ be any arbitrary Lie-Yamaguti algebra. We define linear maps $\rho_\mathrm{ad} : \mathfrak{g} \rightarrow \mathrm{End} (\mathfrak{g})$ and $\mu_\mathrm{ad} : \mathfrak{g} \otimes \mathfrak{g} \rightarrow \mathrm{End}(\mathfrak{g})$ by
    \begin{align*}
        (\rho_\mathrm{ad})_x y := [x, y] \quad \text{ and } \quad (\mu_\mathrm{ad})(x, y) z := \llbracket z, x, y \rrbracket, \text{ for } x, y, z \in \mathfrak{g}.
    \end{align*}
    Then $(\mathfrak{g};  \rho_\mathrm{ad} , \mu_\mathrm{ad})$ is a representation of the given Lie-Yamaguti algebra, called the {\em adjoint representation}. For this representation, the map $D_{   \rho_\mathrm{ad} , \mu_\mathrm{ad} } : \wedge^2 \mathfrak{g} \rightarrow \mathrm{End} (\mathfrak{g})$ is simply given by $ D_{   \rho_\mathrm{ad} , \mu_\mathrm{ad} }  (x, y) z = \llbracket x, y, z \rrbracket$, for $x, y, z \in \mathfrak{g}$. 
\end{exam}

See \cite{sheng-zhao,takahashi} for some more examples of Lie-Yamaguti algebra representations. Given a Lie-Yamaguti algebra and a representation of it, one may construct the corresponding {\em semidirect product}, which turns out to be a Lie-Yamaguti algebra \cite{sheng-zhao}. We do not discuss it here, as in the next section, we recall the more general {\em bicrossed product} associated with a matched pair of Lie-Yamaguti algebras.


\medskip

Let $(\mathfrak{g}, [~,~], \llbracket ~, ~, ~ \rrbracket)$ be a Lie-Yamaguti algebra and $(V; \rho, \mu)$ be any representation of it. For each $n \geq 1$, we set
\begin{align*}
   C^n (\mathfrak{g}, V) :=  \begin{cases}
    \mathrm{Hom} (\mathfrak{g}, V) & \text{ if } n = 1,\\
        \mathrm{Hom} \big( \underbrace{(\wedge^2 \mathfrak{g}) \otimes \cdots \otimes (\wedge^2 \mathfrak{g})}_{(n-1) \text{~copies}} , V \big) \oplus \mathrm{Hom} \big(  \underbrace{(\wedge^2 \mathfrak{g}) \otimes \cdots \otimes (\wedge^2 \mathfrak{g})}_{(n-1) \text{~copies}} \otimes~\! \mathfrak{g} , V \big) & \text{ if } n \geq 2.
    \end{cases}
\end{align*}
Thus, it follows that any element $F \in C^{n \geq 2} (\mathfrak{g}, V)$ has two components $F = (F_\mathrm{I}, F_\mathrm{II})$.
Then there is a map $\delta : C^n (\mathfrak{g}, V) \rightarrow C^{n+1} (\mathfrak{g}, V)$ explicitly given by
\begin{align*}
    (\delta (f))_\mathrm{I} ( x, y) =~& \rho_x f(y) - \rho_y f(x) - f ([x, y]), \\
    (\delta (f))_\mathrm{II} (x, y, z) =~& D_{\rho, \mu} (x, y) f(z) + \mu (y, z) f (x) - \mu (x, z) f (y) - f (\llbracket x, y, z \rrbracket),
\end{align*}
for $f \in C^1 (\mathfrak{g}, V) = \mathrm{Hom} (\mathfrak{g}, V)$ and $x, y, z \in \mathfrak{g}$, and
\begin{align*}
    &(\delta (F))_\mathrm{I} (\mathfrak{X}_1, \ldots, \mathfrak{X}_n) \\
    &= (-1)^{n-1} \big\{  \rho_{x_n} F_{\mathrm{II}} (\mathfrak{X}_1, \ldots, \mathfrak{X}_{n-1}, y_n  ) - \rho_{y_n} F_\mathrm{II} (\mathfrak{X}_1, \ldots, \mathfrak{X}_{n-1}, x_n) - F_\mathrm{II} (\mathfrak{X}_1, \ldots, \mathfrak{X}_{n-1}, [x_n, y_n])  \big\} \\
    &\quad + \sum_{k=1}^{n-1} (-1)^{k+1} ~ \! D_{\rho, \mu} (x_k, y_k) F_\mathrm{I} (\mathfrak{X}_1, \ldots, \widehat{\mathfrak{X}_k}, \ldots, \mathfrak{X}_n) \\
    &\quad + \sum_{1 \leq i < j \leq n} (-1)^i ~ \! F_\mathrm{I} (\mathfrak{X}_1, \ldots, \widehat{\mathfrak{X}_i} , \ldots, [x_i, y_i, x_j] \wedge y_j + x_j \wedge [x_i, y_i, y_j], \ldots, \mathfrak{X}_n),
    \end{align*}
    \begin{align*}
    &(\delta (F))_\mathrm{II} (\mathfrak{X}_1, \ldots, \mathfrak{X}_n, x) \\
    &= (-1)^{n-1} \big\{  \mu (y_n, x) F_\mathrm{II} (\mathfrak{X}_1, \ldots, \mathfrak{X}_{n-1}, x_n) -  \mu (x_n, x) F_\mathrm{II} (\mathfrak{X}_1, \ldots, \mathfrak{X}_{n-1}, y_n)  \big\} \\
    & \quad + \sum_{k=1}^{n} (-1)^{k+1} ~ \! D_{\rho, \mu} (x_k, y_k) F_\mathrm{II} (\mathfrak{X}_1, \ldots, \widehat{\mathfrak{X}_k}, \ldots, \mathfrak{X}_n, x) \\
    & \quad + \sum_{1 \leq i < j \leq n} (-1)^i ~ \! F_\mathrm{II} (\mathfrak{X}_1, \ldots, \widehat{\mathfrak{X}_i} , \ldots, [x_i, y_i, x_j] \wedge y_j + x_j \wedge [x_i, y_i, y_j], \ldots, \mathfrak{X}_n, x) \\
    & \quad + \sum_{k=1}^n (-1)^k F_\mathrm{II} ( \mathfrak{X}_1, \ldots, \widehat{\mathfrak{X}_k}, \ldots, \mathfrak{X}_n, \llbracket x_k, y_k, x \rrbracket ),
\end{align*}
for $F = (F_\mathrm{I}, F_\mathrm{II}) \in C^{n \geq 2} (\mathfrak{g}, V)$, $\mathfrak{X}_i = x_i \wedge y_i \in \wedge^2 \mathfrak{g}$ for all $i =1, \ldots, n$ and $x \in \mathfrak{g}$.
Then it turns out that $\delta^2 = 0$ and thus $\{ C^\bullet (\mathfrak{g}, V), \delta \}$ is a cochain complex \cite{yamaguti}. The corresponding cohomology groups $H^\bullet (\mathfrak{g}, V)$ are called the cohomologies of the Lie-Yamaguti algebra $(\mathfrak{g}, [~,~], \llbracket ~, ~, ~ \rrbracket)$ with coefficients in the representation $(V; \rho, \mu)$.

\section{Matched pairs of Lie-Yamaguti algebras and the factorization problem}\label{section-3} This section begins by recalling the notion of a matched pair of Lie-Yamaguti algebras and the bicrossed product considered in \cite{zhao-qiao2}. In the end, we describe and study the factorization problem for Lie-Yamaguti algebras.

\begin{defn}\label{mply}
    A {\bf matched pair of Lie-Yamaguti algebras} is a tuple 
    \begin{align*}
        \big(  (\mathfrak{g}, [~,~], \llbracket ~, ~, ~ \rrbracket), (\mathfrak{h}, [~,~]', \llbracket ~, ~, ~ \rrbracket'), (\rho, \mu), (\psi, \nu)   \big)
    \end{align*}
    consisting of two Lie-Yamaguti algebras $(\mathfrak{g}, [~,~], \llbracket ~, ~, ~ \rrbracket)$ and $ (\mathfrak{h}, [~,~]', \llbracket ~, ~, ~ \rrbracket')$ equipped with four linear maps
    \begin{align*}
        \rho : \mathfrak{g} \rightarrow \mathrm{End}(\mathfrak{h}), \quad \mu: \mathfrak{g} \otimes \mathfrak{g} \rightarrow \mathrm{End}(\mathfrak{h}), \quad \psi : \mathfrak{h} \rightarrow \mathrm{End} (\mathfrak{g}) ~~~~ \text{ and } ~~~~~ \nu : \mathfrak{h} \otimes \mathfrak{h} \rightarrow \mathrm{End}(\mathfrak{g})
    \end{align*}
    such that (i) $(\mathfrak{h}; \rho, \mu)$ is a representation of the Lie-Yamaguti algebra  $(\mathfrak{g}, [~,~], \llbracket ~, ~, ~ \rrbracket),$ (ii) $(\mathfrak{g}; \psi, \nu)$ is a representation of the Lie-Yamaguti algebra $(\mathfrak{h}, [~,~]', \llbracket ~, ~, ~ \rrbracket')$,
and the following compatibility conditions hold:
    \begin{equation}\label{mp1}
        \rho_x ([\alpha, \beta]') = [\rho_x (\alpha) , \beta]' + [\alpha, \rho_x (\beta)]' + \rho_{\psi_\beta x} (\alpha) - \rho_{\psi_\alpha x} (\beta),
        \end{equation}
         \begin{equation}
           \rho_x (\llbracket \alpha, \beta, \gamma \rrbracket' ) = \llbracket \alpha, \beta, \rho_x (\gamma) \rrbracket' - \rho_{  D_{\psi, \nu} (\alpha, \beta) x} (\gamma), 
        \end{equation}
        \begin{equation}
           \llbracket \rho_x (\alpha), \beta, \gamma \rrbracket' + \llbracket \alpha, \rho_x (\beta), \gamma \rrbracket' = 0, 
        \end{equation}
        \begin{equation}
             \rho_{ \nu (\alpha, \beta) x} (\gamma ) = \rho_{ \nu (\alpha, \gamma) x} (\beta),
        \end{equation}
        \begin{equation}
          \mu (x, y) ([\alpha, \beta]' )= \mu (\psi_\beta (x), y) \alpha - \mu (\psi_\alpha (x), y) \beta,
        \end{equation}
           \begin{equation}
        \mu (x, y) \llbracket \alpha, \beta, \gamma \rrbracket' = \llbracket \alpha, \beta, \mu (x, y) \gamma \rrbracket' - \mu ( D_{\psi, \nu} (\alpha, \beta) x, y) \gamma - \mu (x, D_{\psi, \nu} (\alpha, \beta)y) \gamma,
    \end{equation}
        \begin{equation}
            \mu (x, \psi_\alpha (y)) \beta = [\alpha, \mu (x, y) \beta]',
        \end{equation}
      \begin{equation}
        \mu (\nu (\alpha, \beta) x, y) \gamma - \mu (\nu (\alpha, \gamma) x, y) \beta = \mu (x, D_{\psi, \nu} (\beta, \gamma) y) \alpha - \llbracket \beta, \gamma, \mu (x, y) \alpha \rrbracket',
    \end{equation}
    \begin{equation}
        \mu (x, \nu (\alpha, \beta) y) \gamma = \llbracket \mu (x, y) \gamma, \alpha, \beta \rrbracket' - D_{\rho, \mu } (y, \nu (\gamma, \alpha) x ) \beta + \mu ( y, \nu (\gamma, \beta) x) \alpha,
    \end{equation}
    \begin{equation}\label{mp10}
        \psi_\alpha ([x, y]) = [\psi_\alpha (x), y] + [x, \psi_\alpha (y)] + \psi_{\rho_y (\alpha)} x - \psi_{\rho_x (\alpha)} y,
    \end{equation}
    \begin{equation}\label{eqn-16}
        \psi_\alpha (\llbracket x, y, z \rrbracket) = \llbracket x, y, \psi_\alpha (z) \rrbracket - \psi_{D_{\rho, \mu} (x, y) \alpha } z,
    \end{equation}
    \begin{equation}
        \llbracket \psi_\alpha (x), y , z \rrbracket + \llbracket x, \psi_\alpha (y), z \rrbracket = 0,
    \end{equation}
     \begin{equation}
        \psi_{ \mu (x, y) \alpha} (z) = \psi_{ \mu (x, z) \alpha} (y),
    \end{equation}
    \begin{equation}
        \nu (\alpha, \beta) ([x, y] ) = \nu ( \rho_y (\alpha), \beta) x - \nu (\rho_x (\alpha), \beta) y,
    \end{equation}
    \begin{equation}
        \nu (\alpha, \beta) \llbracket x, y, z \rrbracket = \llbracket x, y, \nu (\alpha, \beta) z \rrbracket - \nu ( D_{\rho, \mu} (x, y) \alpha , \beta) z - \nu (\alpha , D_{\rho, \mu} (x, y) \beta) z,
    \end{equation}
    \begin{equation}
        \nu (\alpha, \rho_x (\beta)) y = [x, \nu (\alpha, \beta) y],
    \end{equation}
    \begin{equation}
        \nu (\mu (x, y) \alpha, \beta) z - \nu ( \mu (x, z) \alpha , \beta) y = \nu (\alpha, D_{\rho, \mu} (y, z) \beta) x - \llbracket y, z, \nu (\alpha, \beta) x \rrbracket,
    \end{equation}
    \begin{equation}\label{mp18}
        \nu (\alpha, \mu (x, y) \beta) z = \llbracket \nu (\alpha , \beta) z, x, y \rrbracket - D_{\psi, \nu} (\beta, \mu (z, x ) \alpha ) y + \nu (\beta, \mu (z, y) \alpha) x,
    \end{equation}
    for all $x, y, z \in \mathfrak{g}$ and $\alpha, \beta, \gamma \in \mathfrak{h}$. We often denote a matched pair of Lie-Yamaguti algebras as above simply by the tuple $( \mathfrak{g}, \mathfrak{h}, (\rho, \mu), (\psi, \nu))$ when the Lie-Yamaguti structures on $\mathfrak{g}$ and $\mathfrak{h}$ are understood.
\end{defn}

\begin{remark}
(i) The notion defined in Definition \ref{mply} is symmetric with respect to the Lie-Yamaguti algebras under consideration. More precisely, if  $ \big(  \mathfrak{g}, \mathfrak{h}, (\rho, \mu), (\psi, \nu)   \big)$  is a matched pair of Lie-Yamaguti algebras then  $ \big(\mathfrak{h},  \mathfrak{g},  (\psi, \nu) , (\rho, \mu) \big)$ is so. 

    (ii) (\cite{zhao-qiao2}) In a matched pair $ \big(  \mathfrak{g}, \mathfrak{h}, (\rho, \mu), (\psi, \nu)   \big)$ of Lie-Yamaguti algebras, the following identities also hold as consequences of (\ref{mp1})-(\ref{mp18}):
    \begin{equation}
        D_{\rho, \mu}(x, y) [\alpha, \beta]' = [ D_{\rho, \mu} (x, y) \alpha , \beta]' + [ \alpha, D_{\rho, \mu} (x, y) \beta]',
    \end{equation}
    \begin{equation}
        D_{\rho, \mu}(x, y) \llbracket \alpha, \beta, \gamma \rrbracket' = \llbracket D_{\rho, \mu}(x, y) \alpha, \beta, \gamma \rrbracket' + \llbracket \alpha, D_{\rho, \mu}(x, y) \beta, \gamma \rrbracket' + \llbracket \alpha, \beta, D_{\rho, \mu}(x, y) \gamma \rrbracket',
    \end{equation}
    \begin{equation}
        D_{\rho, \mu} (\psi_\alpha (x), y) + D_{\rho, \mu}(x, \psi_\alpha (y) ) = 0, 
    \end{equation}
    \begin{equation}
        D_{\psi, \nu} (\alpha, \beta) [x, y] = [ D_{\psi, \nu} (\alpha, \beta) x, y] + [x, D_{\psi, \nu} (\alpha, \beta) y],
    \end{equation}
    \begin{equation}
        D_{\psi, \nu} (\alpha, \beta) \llbracket x, y, z \rrbracket = \llbracket D_{\psi, \nu} (\alpha, \beta) x, y, z \rrbracket + \llbracket x, D_{\psi, \nu} (\alpha, \beta) y, z \rrbracket + \llbracket x, y, D_{\psi, \nu} (\alpha, \beta) z \rrbracket,
    \end{equation}
    \begin{equation}\label{mp24}
        D_{\psi, \nu} (\rho_x (\alpha), \beta ) + D_{\psi, \nu} (\alpha, \rho_x (\beta) ) = 0.
    \end{equation}
\end{remark}

The following result is well-known \cite{zhao-qiao2} and can be proved by direct calculation using (\ref{mp1})-(\ref{mp24}).

\begin{thm}\label{thm-bicrossed}
    Let $\big(  (\mathfrak{g}, [~,~], \llbracket ~, ~, ~ \rrbracket), (\mathfrak{h}, [~,~]', \llbracket ~, ~, ~ \rrbracket'), (\rho, \mu), (\psi, \nu)  \big)$ be a matched pair of Lie-Yamaguti algebras. Define linear maps $[~,~]_\Join : \wedge^2 (\mathfrak{g} \oplus \mathfrak{h}) \rightarrow \mathfrak{g} \oplus \mathfrak{h}$ and $\llbracket ~, ~, ~ \rrbracket_\Join : \wedge^2 (\mathfrak{g} \oplus \mathfrak{h}) \otimes (\mathfrak{g} \oplus \mathfrak{h}) \rightarrow \mathfrak{g} \oplus \mathfrak{h}$ by
    \begin{equation}
         [(x, \alpha), (y, \beta) ]_\Join := \big(  [x, y] + \psi_\alpha y - \psi_\beta x ~ \! , ~ \! [\alpha, \beta]' + \rho_x \beta - \rho_y \alpha \big), \label{bicrossed-1}
    \end{equation}
    \begin{align}
          \llbracket (x, \alpha), (y, \beta), (z, \gamma) \rrbracket_\Join :=~& \big( \llbracket x, y, z \rrbracket+ D_{\psi, \nu} (\alpha, \beta) z + \nu (\beta, \gamma) x - \nu (\alpha, \gamma) y  ~ \!, \nonumber \\
         & \qquad \llbracket \alpha, \beta, \gamma \rrbracket' + D_{\rho, \mu} (x, y) \gamma + \mu (y, z) \alpha - \mu (x, z) \beta \big), \label{bicrossed-2}
    \end{align}
    for $(x, \alpha), (y, \beta), (z, \gamma) \in \mathfrak{g} \oplus \mathfrak{h}$. Then the triple $(\mathfrak{g} \oplus \mathfrak{h}, [~,~]_\Join, \llbracket ~, ~, ~ \rrbracket_\Join)$ is a Lie-Yamaguti algebra.
\end{thm}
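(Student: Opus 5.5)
The plan is a direct verification of the four defining identities of a Lie-Yamaguti algebra for $(\mathfrak{g}\oplus\mathfrak{h}, [~,~]_\Join, \llbracket ~,~,~\rrbracket_\Join)$. All operations are multilinear, so it suffices to check each identity on homogeneous inputs, meaning each argument lies either in $\mathfrak{g}$ (written $(x,0)$) or in $\mathfrak{h}$ (written $(0,\alpha)$). First I note that $[~,~]_\Join$ is skew-symmetric. I also note that $\llbracket ~,~,~\rrbracket_\Join$ is skew in its first two arguments, because $D_{\psi,\nu}$ and $D_{\rho,\mu}$ are skew and the terms $\nu(\beta,\gamma)x-\nu(\alpha,\gamma)y$ and $\mu(y,z)\alpha-\mu(x,z)\beta$ change sign under the swap $(x,\alpha)\leftrightarrow(y,\beta)$. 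So the operations have the required symmetry.

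Next I use the symmetry of Remark (i). Interchanging the roles $(\mathfrak{g},(\psi,\nu))\leftrightarrow(\mathfrak{h},(\rho,\mu))$ carries the formulas (\ref{bicrossed-1}) and (\ref{bicrossed-2}) to themselves, up to the obvious identification $\mathfrak{g}\oplus\mathfrak{h}\cong\mathfrak{h}\oplus\mathfrak{g}$. Hence any case with $k$ arguments from $\mathfrak{h}$ follows from the "mirror" case with $k$ arguments from $\mathfrak{g}$. This roughly halves the work.

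For each identity I sort the cases by how many arguments come from $\mathfrak{h}$ and where they sit.
\begin{itemize}
\item The case with all arguments in $\mathfrak{g}$ is the Lie-Yamaguti axiom of $\mathfrak{g}$.
\item The cases with exactly one argument from $\mathfrak{h}$ split into a $\mathfrak{g}$-component and an $\mathfrak{h}$-component. The $\mathfrak{h}$-component reduces to the representation axioms for $(\mathfrak{h};\rho,\mu)$. For instance, in the first identity with $x,y\in\mathfrak{g}$ and $\alpha\in\mathfrak{h}$, the $\mathfrak{h}$-part is exactly the definition of $D_{\rho,\mu}$ combined with cyclicity. The $\mathfrak{g}$-component reduces to compatibility conditions such as (\ref{mp10}), (\ref{eqn-16}), the condition $\llbracket\psi_\alpha x,y,z\rrbracket+\llbracket x,\psi_\alpha y,z\rrbracket=0$, and $\psi_{\mu(x,y)\alpha}z=\psi_{\mu(x,z)\alpha}y$.
\item The cases with two arguments from $\mathfrak{h}$ follow by the symmetry above from the one-argument cases, and similarly for the remaining mixed cases.
\end{itemize}
Throughout I use the derived identities of Remark (ii), for example (\ref{mp24}) and the derivation properties of $D_{\rho,\mu}$ and $D_{\psi,\nu}$, whenever a $D$-term appears after expanding a nested bracket.

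The main obstacle will be the fourth identity (\ref{ly4}), which has five arguments and therefore many mixed placements, for example $x,y\in\mathfrak{g}$ and $z,w,t$ distributed across both factors. Here a single nested ternary bracket produces terms like $D_{\rho,\mu}(x,y)\mu(z,w)$ and $\nu(\cdot,\cdot)$ applied to a $D$-term. Matching these requires using (\ref{mp18}) and its partners together with the representation axioms for $\mu$ and $D$. For this identity I would first try to reorganize it as the statement that each $D_\Join(X,Y)=\llbracket X,Y,\cdot\rrbracket_\Join$ is a derivation of the ternary bracket. Then I would check this derivation property separately for $X,Y$ both in $\mathfrak{g}$, for $X\in\mathfrak{g}$ and $Y\in\mathfrak{h}$, and (by symmetry) for both in $\mathfrak{h}$, letting the derived identities of Remark (ii) absorb the pure-$D$ terms. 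The remaining verifications are routine bookkeeping, consistent with the statement that the result follows by direct calculation from (\ref{mp1})--(\ref{mp24}).
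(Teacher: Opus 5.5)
Your overall route is the same direct calculation the paper invokes: verify the four axioms on homogeneous inputs and use the $\mathfrak{g}\leftrightarrow\mathfrak{h}$ symmetry of Remark (i). Your handling of skew-symmetry, of the placements with at most one argument in $\mathfrak{h}$, and of (\ref{ly4}) as a derivation property is sound.

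The step that fails is the reduction ``the cases with two arguments from $\mathfrak{h}$ follow by the symmetry above from the one-argument cases.'' The swap sends a placement with $k$ of the $n$ arguments in $\mathfrak{h}$ to one with $n-k$ arguments in $\mathfrak{h}$. So the reduction works only for the three-argument first axiom. For the second axiom (the cyclic identity for $\llbracket [x,y],z,w\rrbracket$) and for (\ref{eqn-3}), where $n=4$, the placements with two arguments in $\mathfrak{h}$ are paired only among themselves. For (\ref{ly4}) the $2$-versus-$3$ placements pair with each other. None of these is reached from your one-argument checks, and they are exactly the cases that use conditions your plan never invokes.

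For example, in (\ref{eqn-3}) with inputs $(0,\alpha),(y,0),(0,\beta),(w,0)$, the left side is $\big(\nu(\alpha,\rho_w\beta)y,\ \mu(y,\psi_\beta w)\alpha\big)$ and the right side is $\big([w,\nu(\alpha,\beta)y],\ [\beta,\mu(y,w)\alpha]'\big)$. So this case \emph{is} the pair of axioms $\nu(\alpha,\rho_x\beta)y=[x,\nu(\alpha,\beta)y]$ and $\mu(x,\psi_\alpha y)\beta=[\alpha,\mu(x,y)\beta]'$. Likewise, the second axiom with $\alpha,\beta\in\mathfrak{h}$ in the first two slots reduces to (\ref{mp24}) together with $\mu(x,y)[\alpha,\beta]'=\mu(\psi_\beta x,y)\alpha-\mu(\psi_\alpha x,y)\beta$. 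And (\ref{eqn-3}) with $\alpha,\beta$ in the first two slots needs $D_{\psi,\nu}(\alpha,\beta)$ to be a derivation of $[~,~]$. As written, your case list never consumes these conditions, which shows the verification would be incomplete.

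The repair is routine but necessary. Check the $2$--$2$ placements of the second axiom and of (\ref{eqn-3}) directly; the symmetry only pairs them with one another. In your derivation-style check of (\ref{ly4}), run over all placements of $(Z,W,T)$ for $(X,Y)\in\mathfrak{g}\times\mathfrak{g}$ and for $(X,Y)\in\mathfrak{g}\times\mathfrak{h}$, and get $\mathfrak{h}\times\mathfrak{h}$ by symmetry, instead of using the two-from-one reduction there. The $2$/$3$ placements are where (\ref{mp18}) and its partners enter, as you anticipated. With that correction, your plan becomes a complete version of the paper's one-line ``direct calculation.''
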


The Lie-Yamaguti algebra $(\mathfrak{g} \oplus \mathfrak{h}, [~,~]_\Join, \llbracket ~, ~, ~ \rrbracket_\Join)$ constructed in the above theorem is called the {\bf bicrossed product} of the given matched pair of Lie-Yamaguti algebras. We denote this bicrossed product algebra simply by $\mathfrak{g} \Join \mathfrak{h}$. It turns out from (\ref{bicrossed-1}) and (\ref{bicrossed-2}) that both $\mathfrak{g}$ and $\mathfrak{h}$ are Lie-Yamaguti subalgebras of the bicrossed product $\mathfrak{g} \Join \mathfrak{h}$. 


\begin{exam}\label{direct-ly}
    Let $\mathfrak{g}$ and $\mathfrak{h}$ be two Lie-Yamaguti algebras. Then $ \big( \mathfrak{g}, \mathfrak{h}, (\rho = 0, \mu = 0 ), (\psi = 0, \nu = 0) \big)$ is a matched pair of Lie-Yamaguti algebras. The corresponding bicrossed product is precisely the direct product $(\mathfrak{g} \oplus \mathfrak{h}, [~,~]_\mathrm{dir} , \llbracket ~, ~, ~ \rrbracket_\mathrm{dir})$, where for $(x, \alpha), (y, \beta), (z, \gamma) \in \mathfrak{g} \oplus \mathfrak{h},$
    \begin{align*}
        [(x, \alpha), (y, \beta)]_\mathrm{dir} := ([x, y] ~ \! \!, ~ \! \! [\alpha, \beta]') ~~~~ \text{ and } ~~~~ \llbracket (x, \alpha), (y, \beta), (z, \gamma) \rrbracket_\mathrm{dir} := ( \llbracket x, y, z \rrbracket ~ \! \! , ~ \! \! \llbracket \alpha, \beta, \gamma \rrbracket' ).
    \end{align*}
    Symmetrically, $(\mathfrak{h}, \mathfrak{g}, (\psi = 0, \nu = 0), (\rho = 0, \mu = 0 ))$ is a matched pair of Lie-Yamaguti algebras.
\end{exam}

\begin{exam}\label{semidirect-ly}
    Let $(\mathfrak{g}, [~,~], \llbracket ~, ~, ~ \rrbracket)$ be a Lie-Yamaguti algebra and $(V; \rho, \mu)$ be a representation of it. We regard $V$ as a trivial Lie-Yamaguti algebra. Then $( \mathfrak{g}, V, (\rho, \mu), (\psi= 0, \nu = 0))$ is a matched pair of Lie-Yamaguti algebras. The corresponding bicrossed product is precisely the semidirect product $(\mathfrak{g} \oplus V, [~,~]_\ltimes, \llbracket ~, ~, ~ \rrbracket_\ltimes)$, where for $(x, \alpha), (y, \beta), (z, \gamma) \in \mathfrak{g} \oplus V$,
    \begin{align*}
        [ (x, \alpha) , (y, \beta)]_\ltimes :=~& ([x, y] ~ \! , ~ \! \rho_x \beta - \rho_y \alpha ),\\
        \llbracket   (x, \alpha), (y, \beta), (z, \gamma) \rrbracket_\ltimes :=~& \big(   \llbracket x, y, z \rrbracket ~ \! , ~ \! D_{\rho, \mu} (x, y) \gamma + \mu (y, z) \alpha - \mu (x, z) \beta \big).
    \end{align*}
    Symmetrically, the tuple $(  V,  \mathfrak{g},  (\psi= 0, \nu = 0), (\rho, \mu)  )$ is also a matched pair of Lie-Yamaguti algebras.
\end{exam}

\begin{exam}\label{action-ly}
    Let $(\mathfrak{g}, [~,~], \llbracket ~, ~ , ~ \rrbracket)$ and  $(\mathfrak{h}, [~,~]', \llbracket ~, ~ , ~ \rrbracket' )$ be two Lie-Yamaguti algebras. A {\bf Lie-Yamaguti action} of $\mathfrak{g}$ on $\mathfrak{h}$ is given by linear maps $\rho: \mathfrak{g} \rightarrow \mathrm{End} (\mathfrak{h})$ and $\mu : \mathfrak{g} \otimes \mathfrak{g} \rightarrow \mathrm{End} (\mathfrak{h})$ that make the triple $(\mathfrak{h}; \rho, \mu)$ into a representation of the Lie-Yamaguti algebra $(\mathfrak{g}, [~,~], \llbracket ~, ~, ~ \rrbracket)$ satisfying additionally
    \begin{equation*}
        \rho_x ([\alpha, \beta]') = [\rho_x (\alpha), \beta]' + [\alpha, \rho_x (\beta)]',
        \end{equation*}
    \begin{equation*}
        \rho_x ( \llbracket \alpha, \beta, \gamma \rrbracket') = \llbracket \alpha, \beta, \rho_x (\gamma) \rrbracket', \quad \llbracket \rho_x (\alpha), \beta, \gamma \rrbracket' + \llbracket \alpha, \rho_x (\beta), \gamma \rrbracket' = 0,
    \end{equation*}
    \begin{equation*}
        \mu (x, y) ([\alpha, \beta]') = 0, \quad \mu (x, y) ( \llbracket \alpha, \beta, \gamma \rrbracket') = 0,
    \end{equation*}
    \begin{equation*}
        [ \mu (x, y) \alpha, \beta]' = 0 ~~~~ \text{ and } ~~~~ \llbracket \mu (x, y) \alpha, \beta, \gamma \rrbracket' = \llbracket \alpha, \beta, \mu (x, y) \gamma \rrbracket' = 0,
    \end{equation*}
    for all $x, y \in \mathfrak{g}$ and $\alpha, \beta, \gamma \in \mathfrak{h}.$
    Then it turns out that $\big( \mathfrak{g}, \mathfrak{h}, (\rho, \mu), (\psi = 0 , \nu = 0) \big)$ is a matched pair of Lie-Yamaguti algebras. Thus, symmetrically, the tuple  $\big( \mathfrak{h}, \mathfrak{g}, (\psi = 0 , \nu = 0) ,   (\rho, \mu)   \big)$ is so.
\end{exam}

The bicrossed product of Lie-Yamaguti algebras given in Theorem \ref{thm-bicrossed} is related to the {\em factorization problem}. Let $(\mathfrak{g}, [~,~], \llbracket ~, ~, ~ \rrbracket)$ and $(\mathfrak{h}, [~,~]', \llbracket ~, ~, ~ \rrbracket')$ be two Lie-Yamaguti algebras. Then a new Lie-Yamaguti algebra $(E, [~, ~]_E, \llbracket ~, ~, ~ \rrbracket_E )$ is said to {\bf factorize} through $\mathfrak{g}$ and $\mathfrak{h}$ if 

(i) $E$ contains $\mathfrak{g}$ and $\mathfrak{h}$ as Lie-Yamaguti subalgebras,

(ii) $E = \mathfrak{g} + \mathfrak{h}$ and $\mathfrak{g} \cap \mathfrak{h} = \{ 0 \}$.

\noindent It is said to be {\bf strongly factorize} if additionally $\llbracket \mathfrak{g}, \mathfrak{h}, \mathfrak{h} \rrbracket_E \subset \mathfrak{g}$ and $ \llbracket \mathfrak{h}, \mathfrak{g}, \mathfrak{g} \rrbracket_E \subset \mathfrak{h}$. Then the factorization problem (in the context of Lie-Yamaguti algebras) asks to describe and classify all Lie-Yamaguti algebras that strongly factorize through $\mathfrak{g}$ and $\mathfrak{h}$.

\begin{thm}\label{thm-factorization}
    Let $(\mathfrak{g}, [~,~], \llbracket ~, ~, ~ \rrbracket)$ and $(\mathfrak{h}, [~,~]', \llbracket ~, ~, ~ \rrbracket')$ be two Lie-Yamaguti algebras. Then a Lie-Yamaguti algebra $(E, [~, ~]_E, \llbracket ~, ~, ~ \rrbracket_E )$ strongly factorizes through $\mathfrak{g}$ and $\mathfrak{h}$ if and only if there exists a matched pair of Lie-Yamaguti algebras $( (\mathfrak{g}, [~,~], \llbracket ~, ~, ~ \rrbracket),  (\mathfrak{h}, [~,~]', \llbracket ~, ~, ~ \rrbracket'), (\rho, \mu), (\psi, \nu))$ such that $E \cong \mathfrak{g} \Join \mathfrak{h}$ as Lie-Yamaguti algebras.
\end{thm}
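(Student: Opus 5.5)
The plan is to treat the two implications separately, and the reverse one is nearly immediate. Suppose $(\mathfrak{g}, \mathfrak{h}, (\rho, \mu), (\psi, \nu))$ is a matched pair and $\varphi : \mathfrak{g} \Join \mathfrak{h} \rightarrow E$ is an isomorphism (compatible with the given copies of $\mathfrak{g}$ and $\mathfrak{h}$). By Theorem \ref{thm-bicrossed} and the remark after it, $\mathfrak{g} \cong \mathfrak{g} \oplus 0$ and $\mathfrak{h} \cong 0 \oplus \mathfrak{h}$ are subalgebras of $\mathfrak{g} \Join \mathfrak{h}$. Together they give a vector space decomposition. Now read off from (\ref{bicrossed-2}):
\begin{align*}
\llbracket (x,0),(0,\beta),(0,\gamma) \rrbracket_\Join &= (\nu(\beta,\gamma)x, 0),\\
\llbracket (0,\alpha),(y,0),(z,0) \rrbracket_\Join &= (0, \mu(y,z)\alpha).
\end{align*}
These are exactly the strong conditions $\llbracket \mathfrak{g}, \mathfrak{h}, \mathfrak{h} \rrbracket \subset \mathfrak{g}$ and $\llbracket \mathfrak{h}, \mathfrak{g}, \mathfrak{g} \rrbracket \subset \mathfrak{h}$. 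Since $\varphi$ preserves these subspaces, $E$ strongly factorizes.

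For the forward direction, suppose $E$ strongly factorizes. Every element of $E$ is uniquely $x + \alpha$ with $x \in \mathfrak{g}$ and $\alpha \in \mathfrak{h}$. Let $\pi_\mathfrak{g}$ and $\pi_\mathfrak{h}$ be the two projections. The four maps are defined by decomposing mixed brackets:
\begin{align*}
[x, \alpha]_E &= -\psi_\alpha x + \rho_x \alpha, \text{ i.e. } \rho_x\alpha := \pi_\mathfrak{h}[x,\alpha]_E, \quad \psi_\alpha x := \pi_\mathfrak{g}[\alpha, x]_E,\\
\mu(y,z)\alpha &:= \llbracket \alpha, y, z \rrbracket_E \in \mathfrak{h}, \qquad \nu(\beta,\gamma)x := \llbracket x, \beta, \gamma \rrbracket_E \in \mathfrak{g}.
\end{align*}
The last two definitions land in the stated subspaces by the strong hypothesis. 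The key point is that the ternary bracket of three elements from $\mathfrak{g} \oplus \mathfrak{h}$ is then completely determined. By skew-symmetry in the first two slots and trilinearity, the only remaining mixed terms are of the forms $\llbracket x, y, \gamma \rrbracket_E$ and $\llbracket \alpha, \beta, z \rrbracket_E$. I will show these equal $(0, D_{\rho,\mu}(x,y)\gamma)$ and $(D_{\psi,\nu}(\alpha,\beta)z, 0)$, respectively. To do this, expand the first Lie-Yamaguti identity (the cyclic one) in $E$ for the triple $(x, y, \gamma)$. It expresses $\llbracket x, y, \gamma \rrbracket_E$ through binary brackets and the terms $\llbracket y, \gamma, x \rrbracket_E$ and $\llbracket \gamma, x, y \rrbracket_E$, which are already known via $\mu$. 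Writing out $[[x,y],\gamma]_E + [[y,\gamma],x]_E + [[\gamma,x],y]_E$ with the definitions of $\rho$ and $\psi$, the $\mathfrak{g}$-components cancel, leaving
\[
\rho_x\rho_y\gamma - \rho_y\rho_x\gamma - \rho_{[x,y]}\gamma - \mu(x,y)\gamma + \mu(y,x)\gamma = D_{\rho,\mu}(x,y)\gamma
\]
in $\mathfrak{h}$. This requires using the identity componentwise. The case $\llbracket \alpha, \beta, z \rrbracket_E$ is symmetric. Hence the linear bijection $\mathfrak{g} \oplus \mathfrak{h} \rightarrow E$, $(x,\alpha) \mapsto x + \alpha$, transports $[~,~]_E$ and $\llbracket ~,~,~ \rrbracket_E$ exactly to formulas (\ref{bicrossed-1}) and (\ref{bicrossed-2}).

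It remains to check that $(\rho, \mu)$ and $(\psi, \nu)$ are representations satisfying (\ref{mp1})--(\ref{mp18}). The strategy is to pull back the structure along the bijection: the brackets (\ref{bicrossed-1}) and (\ref{bicrossed-2}) on $\mathfrak{g} \oplus \mathfrak{h}$ then form a Lie-Yamaguti algebra. Each required identity is obtained by evaluating one of the four Lie-Yamaguti axioms on suitable mixed inputs, for example $(x, \alpha, \beta)$ or $(\alpha, \beta, x, y, z)$, and projecting onto $\mathfrak{g}$ or $\mathfrak{h}$. The representation axioms for $(\mathfrak{h}; \rho, \mu)$ come from inputs with one $\mathfrak{h}$-entry, projected to $\mathfrak{h}$. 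Compatibilities such as (\ref{mp1}) and (\ref{mp10}) come from binary or mixed inputs, projected appropriately.

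I expect the main obstacle to be this bookkeeping: confirming that every one of the eighteen conditions, and each representation axiom, appears as some projection of some axiom. The cleanest route is to note that the converse of Theorem \ref{thm-bicrossed} holds, namely that (\ref{bicrossed-1}) and (\ref{bicrossed-2}) define a Lie-Yamaguti algebra only if the data form a matched pair (as in \cite{zhao-qiao2}). I would prove this by systematically sorting the four axioms by how many arguments lie in $\mathfrak{h}$, since the identities split precisely along this grading.
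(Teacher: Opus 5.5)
Your route is the paper's: the same four maps read off from mixed brackets, the same reverse direction, and the matched-pair axioms extracted by sorting the Lie-Yamaguti identities of $E$ by the number of $\mathfrak h$-entries. You also correctly notice something the paper passes over: the components $\llbracket x,y,\gamma\rrbracket_E$ and $\llbracket \alpha,\beta,z\rrbracket_E$ are not fixed by (\ref{equa1})--(\ref{equa3}).

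\textbf{The gap.} Your argument for those two components is circular. The first identity on $(x,y,\gamma)$ does give the $\mathfrak h$-component $D_{\rho,\mu}(x,y)\gamma$. But the $\mathfrak g$-components of $[[x,y],\gamma]_E+[[y,\gamma],x]_E+[[\gamma,x],y]_E$ do not cancel by themselves. They add up to $-\big(\psi_\gamma[x,y]-[\psi_\gamma x,y]-[x,\psi_\gamma y]-\psi_{\rho_y\gamma}x+\psi_{\rho_x\gamma}y\big)$, i.e.\ minus the defect of (\ref{mp10}). Since $\llbracket y,\gamma,x\rrbracket_E$ and $\llbracket\gamma,x,y\rrbracket_E$ lie in $\mathfrak h$, the identity really says that $\mathrm{pr}_{\mathfrak g}\llbracket x,y,\gamma\rrbracket_E$ equals that defect. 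Claiming cancellation therefore assumes (\ref{mp10}), which is one of the conditions you still have to derive. In the bicrossed product, (\ref{mp10}) is derived from exactly this instance of the identity, using $\llbracket\mathfrak g,\mathfrak g,\mathfrak h\rrbracket_\Join\subset\mathfrak h$. The ``symmetric'' case $\llbracket\alpha,\beta,z\rrbracket_E$ has the same problem with (\ref{mp1}).

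\textbf{The defect can be nonzero.} Let $L=\mathfrak{gl}_2\ltimes\mathbf{k}^2$ with $[X+s,Y+t]_L=[X,Y]+Xt-Ys$, and let $\mathfrak k=\mathfrak{so}_2$. The complement $\mathfrak m=\mathrm{Sym}_2\oplus\mathbf{k}^2$ is $\mathrm{ad}_{\mathfrak k}$-stable. So Nomizu's construction, $[u,v]=\mathrm{pr}_{\mathfrak m}[u,v]_L$ and $\llbracket u,v,w\rrbracket=[\mathrm{pr}_{\mathfrak k}[u,v]_L,w]_L$, is a Lie-Yamaguti algebra on $\mathfrak m$. Explicitly, $[X+s,Y+t]=Xt-Ys$ and $\llbracket X+s,Y+t,Z+u\rrbracket=[[X,Y],Z]+[X,Y]u$.

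Take $E=\mathfrak m\times\mathbf{k}^2$ (direct product with the trivial algebra), $\mathfrak g=\mathfrak m\times 0$ and $\mathfrak h=\{(w,w)\,|\,w\in\mathbf{k}^2\}$. Then:
\begin{itemize}
\item $\mathfrak h$ is a subalgebra with zero brackets, and $E=\mathfrak g\oplus\mathfrak h$.
\item Every ternary bracket of $E$ lies in $\mathfrak g$, so $\llbracket\mathfrak g,\mathfrak h,\mathfrak h\rrbracket_E\subset\mathfrak g$.
\item $\llbracket\mathfrak h,\mathfrak g,\mathfrak g\rrbracket_E=0$, because $[w,Y+t]_L=-Yw$ has no $\mathfrak k$-part.
\end{itemize}
So $E$ strongly factorizes in the stated sense. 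Yet take $S_1=\mathrm{diag}(1,-1)$, $S_2$ the symmetric matrix with zero diagonal and off-diagonal entries $1$, and $w\neq 0$. Then $\llbracket(S_1,0),(S_2,0),(w,w)\rrbracket_E=([S_1,S_2]w,0)$ is a nonzero element of $\mathfrak g$. Hence no choice of $(\rho,\mu,\psi,\nu)$ makes $(x,\alpha)\mapsto x+\alpha$ an isomorphism $\mathfrak g\Join\mathfrak h\to E$. This $E$ is abstractly a direct product, so the example breaks the canonical identification, not the bare existence claim.

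\textbf{Relation to the paper and a fix.} The paper's proof does not close this gap either; it simply asserts the identification from (\ref{equa1})--(\ref{equa3}). The clean fix is to add $\llbracket\mathfrak g,\mathfrak g,\mathfrak h\rrbracket_E\subset\mathfrak h$ and $\llbracket\mathfrak h,\mathfrak h,\mathfrak g\rrbracket_E\subset\mathfrak g$ to the definition of strong factorization. Both hold in every bicrossed product by (\ref{bicrossed-2}), so the reverse direction is unaffected. With these hypotheses, your first-identity computation gives the $\mathfrak h$-components and legitimately yields (\ref{mp10}) and (\ref{mp1}). The rest of your bookkeeping then goes through.
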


\begin{proof}
    Suppose there is a matched pair $( (\mathfrak{g}, [~,~], \llbracket ~, ~, ~ \rrbracket),  (\mathfrak{h}, [~,~]', \llbracket ~, ~, ~ \rrbracket'), (\rho, \mu), (\psi, \nu))$ of Lie-Yamaguti algebras. Then Theorem \ref{thm-bicrossed} implies that the bicrossed product $(\mathfrak{g} \oplus \mathfrak{h}, [~, ~]_\Join, \llbracket ~, ~, ~  \rrbracket_\Join)$ strongly factorizes through $\mathfrak{g} \cong \mathfrak{g} \oplus \{ 0 \}$ and $\mathfrak{h} \cong \{ 0 \} \oplus \mathfrak{h}$. Conversely, we assume that a Lie-Yamaguti algebra $(E, [~,~]_E, \llbracket ~, ~, ~ \rrbracket_E)$ strongly factorizes through $\mathfrak{g}$ and $\mathfrak{h}$. Since $E = \mathfrak{g} + \mathfrak{h}$ and $\mathfrak{g} \cap \mathfrak{h} = \{  0\}$, any element of $E$ can be uniquely written as a pair $(x, \alpha)$, for $x \in \mathfrak{g}$ and $\alpha \in \mathfrak{h}$. We now define linear maps $\rho : \mathfrak{g} \rightarrow \mathrm{End} (\mathfrak{h})$, $\psi : \mathfrak{h} \rightarrow \mathrm{End} (\mathfrak{g})$, $\mu : \mathfrak{g} \otimes \mathfrak{g} \rightarrow \mathrm{End} (\mathfrak{h})$ and $\nu : \mathfrak{h} \otimes \mathfrak{h} \rightarrow \mathrm{End} (\mathfrak{g})$ by
    \begin{align}
        [ (x, 0), (0, \alpha )]_E =~& (- \psi_\alpha x ~ \!, ~ \! \rho_x \alpha), \label{equa1}\\
        \llbracket (0, \alpha), (x, 0), (y, 0) \rrbracket_E =~& (0, \mu (x, y) \alpha),\\
        \llbracket (x, 0), (0, \alpha), (0, \beta) \rrbracket_E =~& (\nu (\alpha, \beta) x, 0), \label{equa3}
    \end{align}
    for $x, y \in \mathfrak{g}$ and $\alpha, \beta \in \mathfrak{h}$. It is easy to see that the Lie-Yamaguti algebra identities of $(E, [~,~]_E, \llbracket ~, ~, ~ \rrbracket_E)$ are equivalent to mean that the tuple 
   $( (\mathfrak{g}, [~,~], \llbracket ~, ~, ~ \rrbracket),  (\mathfrak{h}, [~,~]', \llbracket ~, ~, ~ \rrbracket'), (\rho, \mu), (\psi, \nu))$ is a matched pair of Lie-Yamaguti algebras. Additionally, in view of (\ref{equa1})-(\ref{equa3}), the Lie-Yamaguti algebra structure $(E, [~,~]_E, \llbracket ~, ~, ~ \rrbracket_E)$ can be identified with the bicrossed product $\mathfrak{g} \Join \mathfrak{h}$ of the above matched pair of Lie-Yamaguti algebras.
\end{proof}

When one considers Lie-Yamaguti algebras $(E, [~, ~]_E, \llbracket ~, ~, ~ \rrbracket_E)$ that only factorize through the Lie-Yamaguti algebras $\mathfrak{g}$ and $\mathfrak{h}$ (not necessarily strongly factorize), the above result doesn't hold. To observe this, we first recall that a {\bf matched pair of Lie algebras} \cite{agore-lie,majid} is a quadruple $( (\mathfrak{g}, [~,~]), (\mathfrak{h}, [~,~]'), \rho, \psi)$ that consists of two Lie algebras $(\mathfrak{g}, [~,~])$ and $ (\mathfrak{h}, [~,~]')$ together with Lie algebra representations $\rho : \mathfrak{g} \rightarrow \mathrm{End} (\mathfrak{h})$ and $\psi : \mathfrak{h} \rightarrow \mathrm{End} (\mathfrak{g})$ satisfying additionally the identities (\ref{mp1}) and (\ref{mp10}). In this case, $(\mathfrak{g} \oplus \mathfrak{h} , [~, ~]_\Join)$ is a Lie algebra, where the Lie bracket $[~, ~]_\Join : \wedge^2 (\mathfrak{g} \oplus \mathfrak{h}) \rightarrow \mathfrak{g} \oplus \mathfrak{h}$ is given by (\ref{bicrossed-1}). Hence by (\ref{lie-ly}), the triple $E = (\mathfrak{g} \oplus \mathfrak{h}, [~, ~]_\Join, \llbracket ~, ~, ~ \rrbracket_\Join)$ is a Lie-Yamaguti algebra, where
\begin{align*}
    \llbracket (x, \alpha), (y, \beta), (z, \gamma) \rrbracket_\Join := [ [(x, \alpha), (y, \beta)]_\Join, (z, \gamma) ]_\Join, \text{ for } (x, \alpha), (y, \beta), (z, \gamma) \in \mathfrak{g} \oplus \mathfrak{h}.
\end{align*}
The Lie-Yamaguti algebra $E$ obviously factorizes through the Lie-Yamaguti algebras $\mathfrak{g}$ and $\mathfrak{h}$ (here the Lie-Yamaguti structures on $\mathfrak{g}$ and $\mathfrak{h}$ are obtained from the respective Lie algebra structures by applying (\ref{lie-ly})). However, the pair $(\mathfrak{g}, \mathfrak{h})$ of Lie-Yamaguti algebras doesn't form a matched pair.

\medskip

Similarly, a {\bf matched pair of Leibniz algebras} \cite{agore-leibniz} is a tuple $( (\mathfrak{l}, \circ), (\mathfrak{m}, \circ'), (\rho^L, \rho^R), (\psi^L, \psi^R))$ consisting of Leibniz algebras $(\mathfrak{l}, \circ)$ and  $ (\mathfrak{m}, \circ')$ together with linear maps
    \begin{align*}
        \rho^L : \mathfrak{l} \otimes \mathfrak{m} \rightarrow \mathfrak{m}, \quad \rho^R : \mathfrak{m} \otimes \mathfrak{l} \rightarrow \mathfrak{m}, \quad \psi^L : \mathfrak{m} \otimes \mathfrak{l} \rightarrow \mathfrak{l} ~~~~~ \text{ and } ~~~~~ \psi^R : \mathfrak{l} \otimes \mathfrak{m} \rightarrow \mathfrak{l}
    \end{align*}
    such that $( \mathfrak{m} ; \rho^L, \rho^R)$ is a representation of the Leibniz algebra $(\mathfrak{l}, \circ)$, and $( \mathfrak{l} ; \psi^L, \psi^R)$ is a representation of the Leibniz algebra $(\mathfrak{m}, \circ')$ satisfying a list of compatibility conditions that ensures that the direct sum $\mathfrak{l} \oplus \mathfrak{m}$ with the operation
    \begin{align*}
        (x, \alpha) \circ_\Join (y, \beta) = (x \circ y + \psi^L (\alpha, y) + \psi^R (x, \beta) ~ \! , ~ \! \alpha \circ' \beta + \rho^L (x, \beta) + \rho^R (\alpha, y) ),
    \end{align*}
    for $(x, \alpha), (y, \beta) \in \mathfrak{l} \oplus \mathfrak{m}$, is a Leibniz algebra. Hence the triple $E= (\mathfrak{l} \oplus \mathfrak{m}, [~,~]_\Join, \llbracket ~, ~, ~ \rrbracket_\Join)$ is a Lie-Yamaguti algebra, where
    \begin{align*}
        [ (x, \alpha), (y, \beta)]_\Join :=~& (x, \alpha) \circ_\Join (y, \beta) - (y, \beta) \circ_\Join (x, \alpha), \\
        \llbracket (x, \alpha), (y, \beta) , (z, \gamma) \rrbracket_\Join :=~& -\big(   (x, \alpha) \circ_\Join (y, \beta) \big) \circ_\Join (z, \gamma),
    \end{align*}
    for $(x, \alpha), (y, \beta), (z, \gamma) \in \mathfrak{l} \oplus \mathfrak{m}$. This Lie-Yamaguti algebra $E$ factorizes through the Lie-Yamaguti algebras $\mathfrak{l}$ and $\mathfrak{m}$ (that are obtained from the corresponding Leibniz algebras by applying (\ref{leib-ly})). However, the pair $(\mathfrak{l}, \mathfrak{m})$ of Lie-Yamaguti algebras doesn't form a matched pair.

    \medskip

    Let $(\mathfrak{g}, [~,~], \llbracket ~, ~, ~ \rrbracket)$ and $(\mathfrak{h}, [~,~]', \llbracket ~, ~, ~ \rrbracket')$ be two Lie-Yamaguti algebras. Suppose there are two matched pairs of Lie-Yamaguti algebras $(\mathfrak{g}, \mathfrak{h}, (\rho, \mu), (\psi, \nu))$ and $(\mathfrak{g}, \mathfrak{h}, (\rho', \mu'), (\psi', \nu'))$ with the associated bicroseed products $\mathfrak{g} \Join \mathfrak{h}$ and $\mathfrak{g} \Join' \mathfrak{h}$. Then there exists a bijective correspondence between the set of all Lie-Yamaguti algebra isomorphisms $\Phi: \mathfrak{g} \Join \mathfrak{h} \rightarrow \mathfrak{g} \Join' \mathfrak{h}$ for which $\mathfrak{g}$ and $\mathfrak{h}$ are both invariants (i.e., $\Phi (\mathfrak{g}) \subset \mathfrak{g}$ and $\Phi (\mathfrak{h}) \subset \mathfrak{h}$) and pairs $(u : \mathfrak{g} \rightarrow \mathfrak{g} ~ \! ,~ \! v: \mathfrak{h} \rightarrow \mathfrak{h})$ of Lie-Yamaguti algebra isomorphisms satisfying
    \begin{align}\label{lastt}
        &v (\rho_x \alpha) = \rho'_{u (x)} v (\alpha), \quad v (\mu (x, y) \alpha) = \mu' ( u(x), u(y)) v (\alpha), \quad u (\psi_\alpha x) = \psi'_{v (\alpha)} u(x) \nonumber \\
        & \qquad \text{ and } u (\nu (\alpha, \beta) x) = \nu' (v (\alpha), v (\beta)) u (x), \text{ for any } x, y \in \mathfrak{g} \text{ and } \alpha , \beta \in \mathfrak{h}. 
    \end{align}
    Given two Lie-Yamaguti algebras $\mathfrak{g}$ and $\mathfrak{h}$ as above, let $\mathcal{MP} (\mathfrak{g}, \mathfrak{h})$ be the set of all quadruples $(\rho, \mu, \psi, \nu)$ of linear maps that make $(\mathfrak{g}, \mathfrak{h}, (\rho, \mu) , (\psi, \nu))$ into a matched pair of Lie-Yamaguti algebras. A quadruple $(\rho, \mu, \psi, \nu)$ is equivalent to another such quadruple $(\rho', \mu', \psi', \nu')$ if there exist Lie-Yamaguti algebra isomorphisms $u : \mathfrak{g} \rightarrow \mathfrak{g}$ and $v : \mathfrak{h} \rightarrow \mathfrak{h}$ satisfying the four identities in (\ref{lastt}). This defines an equivalence relation $\sim$ on the set $\mathcal{MP} (\mathfrak{g}, \mathfrak{h})$. It follows that the set of all isomorphism classes of Lie-Yamaguti algebras that strongly factorize through $\mathfrak{g}$ and $\mathfrak{h}$, and make them invariant has a bijection with the set $\mathcal{MP} (\mathfrak{g}, \mathfrak{h}) / \sim$.

\section{Deformation maps and the classifying complements problem}\label{section-4} 

In this section, we first introduce deformation maps in a matched pair of Lie-Yamaguti algebras. Such a map unifies various well-known operators (e.g., homomorphisms, derivations, crossed homomorphisms and relative Rota-Baxter operators) on Lie-Yamaguti algebras. We show that a deformation map induces a new Lie-Yamaguti algebra structure, and there is a suitable representation of it. Next, given an inclusion $\mathfrak{g} \subset E$ of Lie-Yamaguti algebras and a strong $\mathfrak{g}$-complement $\mathfrak{h}$, we show that any other $\mathfrak{g}$-complement can be obtained by the graph of a deformation map $r : \mathfrak{h} \rightarrow \mathfrak{g}$. We also consider an equivalence relation $\sim$ on the set $\mathcal{DM} (\mathfrak{g}, \mathfrak{h})$ of all deformation maps and prove that the set of all isomorphism classes of $\mathfrak{g}$-complements has a bijection with $\mathcal{DM} (\mathfrak{g}, \mathfrak{h}) / \sim$.

\begin{defn}
 Let $\big(  \mathfrak{g}, \mathfrak{h}, (\rho, \mu), (\psi, \nu)   \big)$ be a matched pair of Lie-Yamaguti algebras. Then a linear map $r : \mathfrak{h} \rightarrow \mathfrak{g}$ is said to be a {\bf deformation map} if for all $\alpha, \beta, \gamma \in \mathfrak{h}$,
 \begin{align}\label{defor-1}
     [r(\alpha), r (\beta)] + \psi_\alpha r (\beta) - \psi_\beta r (\alpha) =~&  r \big( [\alpha, \beta]' + \rho_{r (\alpha)} \beta - \rho_{r (\beta)} \alpha \big),
     \end{align}
     \begin{align}\label{defor-2}
     \llbracket r (\alpha), r (\beta), r (\gamma) \rrbracket + D_{\psi, \nu} (\alpha, \beta) r(\gamma) & + \nu (\beta, \gamma) r (\alpha) - \nu (\alpha , \gamma) r (\beta) \nonumber \\
     = r \big(  \llbracket \alpha , \beta, \gamma \rrbracket' +& D_{\rho, \mu} (r (\alpha), r (\beta)) \gamma + \mu ( r (\beta), r (\gamma)) \alpha - \mu (r (\alpha), r (\gamma)) \beta \big).
 \end{align}
 We denote the set of all deformation maps by $\mathcal{DM} (\mathfrak{g}, \mathfrak{h}).$
\end{defn}

In the following, we first observe that a deformation map in a matched pair of Lie-Yamaguti algebras unifies several well-known operators 
on Lie-Yamaguti algebras.

\begin{exam}
    Let  $(\mathfrak{g}, [~,~], \llbracket ~, ~, ~ \rrbracket)$ and $(\mathfrak{h}, [~,~]', \llbracket ~, ~, ~ \rrbracket' )$ be two Lie-Yamaguti algebras. A linear map $\varphi : \mathfrak{g} \rightarrow \mathfrak{h}$ is said to be a {\bf homomorphism} of Lie-Yamaguti algebras from $\mathfrak{g}$ to $\mathfrak{h}$ if it satisfies 
    \begin{align*}
         \varphi ([x, y]) = [ \varphi (x), \varphi (y) ]' ~~~~ \text{ and } ~~~~ \varphi (\llbracket x, y, z \rrbracket ) = \llbracket \varphi (x), \varphi (y), \varphi (z) \rrbracket', \text{ for all } x, y, z \in \mathfrak{g}.
    \end{align*}
    It is easy to see that a linear map $\varphi : \mathfrak{g} \rightarrow \mathfrak{h}$ is a homomorphism of Lie-Yamaguti algebras if and only if $\varphi$ is a deformation map in the matched pair $(\mathfrak{h}, \mathfrak{g}, (\psi= 0 , \nu = 0), (\rho = 0, \mu = 0))$ considered in Example \ref{direct-ly}. 
    
\end{exam}

\begin{exam}
    Let $(\mathfrak{g}, [~,~], \llbracket ~, ~, ~ \rrbracket)$ be a Lie-Yamaguti algebra and $(V; \rho, \mu)$ be a representation of it.

    (i) A linear map $d : \mathfrak{g} \rightarrow V$ is said to be a {\bf derivation} (\cite{sun-chen}) on $\mathfrak{g}$ with values in $V$ if 
    \begin{align*}
        d ([x, y]) = \rho_x d (y) - \rho_y d (x) ~~~~ \text{ and } ~~~~ 
        d (\llbracket x, y, z \rrbracket ) =D_{\rho, \mu} (x, y) d (z) + \mu (y, z) d (x) - \mu (x, z) d (y),
    \end{align*}
    for all $x, y, z \in \mathfrak{g}$. It can be checked that a linear map $d : \mathfrak{g} \rightarrow V$ is a derivation if and only if $d $ is a deformation map in the matched pair $(V, \mathfrak{g}, (\psi= 0, \nu= 0), (\rho, \mu))$ considered in Example \ref{semidirect-ly}.

    (ii) A linear map $R : V \rightarrow \mathfrak{g}$ is said to be a {\bf relative Rota-Baxter operator of weight $0$} (also called an {\bf $\mathcal{O}$-operator} \cite{sheng-zhao,zhao-qiao}) if it satisfies
    \begin{align*}
        [R(\alpha), R(\beta)] =~& R (\rho_{R(\alpha)} \beta - \rho_{R(\beta)} \alpha), \\
        \llbracket R(\alpha), R(\beta), R (\gamma) \rrbracket =~& R \big(   D _{\rho, \mu} (R(\alpha), R(\beta)) \gamma + \mu (R(\beta), R(\gamma)) \alpha - \mu (R (\alpha), R(\gamma)) \beta   \big),
    \end{align*}
    for all $\alpha, \beta, \gamma \in V$. Then it follows that a linear map $R: V \rightarrow \mathfrak{g}$ is a relative Rota-Baxter operator of weight $0$ if and only if $R$ is a deformation map in the matched pair $(\mathfrak{g}, V, (\rho, \mu), (\psi = 0 , \nu = 0))$ considered in Example \ref{semidirect-ly}.
\end{exam}

\begin{exam}
    Let $(\mathfrak{g}, [~,~], \llbracket ~, ~, ~ \rrbracket)$ and $(\mathfrak{h}, [~,~]', \llbracket ~, ~, ~ \rrbracket')$ be two Lie-Yamaguti algebras, and $\rho, \mu$ defines a Lie-Yamaguti action of $\mathfrak{g}$ on $\mathfrak{h}$ (see Example \ref{action-ly}). 

    (i) Then a linear map $d : \mathfrak{g} \rightarrow \mathfrak{h}$ is said to be a {\bf crossed homomorphism} (also called a {\bf differential operator of weight $1$} \cite{sun-chen}) on $\mathfrak{g}$ with values in $\mathfrak{h}$ if it satisfies
    \begin{align*}
        d ([x, y]) =~&  [ d(x), d(y)]' + \rho_x d (y) - \rho_y d (x),\\
        d (\llbracket x, y, z \rrbracket ) =~& \llbracket d(x), d(y), d (z) \rrbracket' + D_{\rho, \mu} (x, y) d(z) + \mu (y, z ) d (x) - \mu (x, z) d (y) ,
    \end{align*}
    for all $x, y, z \in \mathfrak{g}$. It is easy to see that a linear map $d : \mathfrak{g} \rightarrow \mathfrak{h}$ is a crossed homomorphism if and only if $d$ is a deformation map in the matched pair $(\mathfrak{h}, \mathfrak{g}, (\psi= 0, \nu = 0), (\rho, \mu))$ considered in Example \ref{action-ly}.

    (ii) On the other hand, a linear map $R : \mathfrak{h} \rightarrow \mathfrak{g}$ is said to be a {\bf relative Rota-Baxter operator of weight $1$} if it satisfies
    \begin{align*}
        [R(\alpha), R(\beta)] =~& R ([\alpha, \beta]' + \rho_{R(\alpha)} \beta - \rho_{R(\beta)} \alpha), \\
        \llbracket R(\alpha), R(\beta), R (\gamma) \rrbracket =~& R \big( \llbracket \alpha, \beta, \gamma \rrbracket' + D _{\rho, \mu} (R(\alpha), R(\beta)) \gamma + \mu (R(\beta), R(\gamma)) \alpha - \mu (R (\alpha), R(\gamma)) \beta   \big),
    \end{align*}
    for all $\alpha, \beta, \gamma \in \mathfrak{h}$. It follows that a linear map $R: \mathfrak{h} \rightarrow \mathfrak{g}$ is a relative Rota-Baxter operator of weight $1$ if and only if $R$ is a deformation map in the matched pair $(\mathfrak{g}, \mathfrak{h}, (\rho, \mu), (\psi= 0, \nu = 0))$ considered in Example \ref{action-ly}.
\end{exam}

\medskip

Let $((\mathfrak{g}, [~,~]),( \mathfrak{h}, [~,~]'), \rho, \psi)$ be a matched pair of Lie algebras. Then a linear map $r : \mathfrak{h} \rightarrow \mathfrak{g}$ is called a {\em deformation map} \cite{agore-lie} if it satisfies
\begin{align}\label{lie-deformation}
    [r(\alpha), r (\beta)] + \psi_\alpha r (\beta) - \psi_\beta r (\alpha) =~&  r \big( [\alpha, \beta]' + \rho_{r (\alpha)} \beta - \rho_{r (\beta)} \alpha \big),
    \end{align}
    for all $\alpha, \beta \in \mathfrak{h}$. Then $d$ is also a deformation map in the matched pair
    \begin{align*}
        \big(  (\mathfrak{g}, [~, ~], \llbracket ~, ~, ~ \rrbracket = 0), (\mathfrak{h}, [~,~]', \llbracket ~, ~, ~ \rrbracket' = 0), (\rho, \mu = 0), (\psi, \nu = 0)    \big)
    \end{align*}
    of Lie-Yamaguti algebras. However, when the ternary operations of the Lie-Yamaguti structures on $\mathfrak{g}$ and $\mathfrak{h}$ are obtained from the respective Lie algebra structures by applying (\ref{lie-ly}), the pair $(\mathfrak{g}, \mathfrak{h})$ of Lie-Yamaguti algebras doesn't form a matched pair. Hence, deformation maps has no sense here. 


We return to deformation maps in an arbitrary matched pair of Lie-Yamaguti algebras. Here, we first provide a characterization of deformation maps in terms of their graph.

\begin{prop}\label{prop-graph}
     Let $\big( \mathfrak{g}, \mathfrak{h}, (\rho, \mu), (\psi, \nu)   \big)$ be a matched pair of Lie-Yamaguti algebras. Then a linear map $r : \mathfrak{h} \rightarrow \mathfrak{g}$ is a deformation map if and only if its graph
     \begin{align*}
         Graph (r) = \{  (r (\alpha), \alpha) ~ \! | ~ \! \alpha \in \mathfrak{h} \} 
     \end{align*}
     is a Lie-Yamaguti subalgebra of the bicrossed product $ \mathfrak{g} \Join \mathfrak{h} = (\mathfrak{g} \oplus \mathfrak{h}, [~, ~ ]_\Join, \llbracket ~, ~, ~ \rrbracket_\Join )$ constructed in Theorem \ref{thm-bicrossed}. 
\end{prop}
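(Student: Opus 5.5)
The plan is a direct computation of the bicrossed product operations on elements of the graph. Since $Graph(r)$ is the image of the injective linear map $\alpha \mapsto (r(\alpha), \alpha)$, it is a linear subspace of $\mathfrak{g} \oplus \mathfrak{h}$. Hence it is a Lie-Yamaguti subalgebra exactly when it is closed under $[~,~]_\Join$ and $\llbracket ~, ~, ~ \rrbracket_\Join$. The key observation is that an element $(x, \alpha) \in \mathfrak{g} \oplus \mathfrak{h}$ lies in $Graph(r)$ if and only if $x = r(\alpha)$, i.e., the $\mathfrak{g}$-component equals $r$ applied to the $\mathfrak{h}$-component.

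First I will compute, using (\ref{bicrossed-1}),
\begin{align*}
[(r(\alpha), \alpha), (r(\beta), \beta)]_\Join = \big( [r(\alpha), r(\beta)] + \psi_\alpha r(\beta) - \psi_\beta r(\alpha) ~ \! , ~ \! [\alpha, \beta]' + \rho_{r(\alpha)} \beta - \rho_{r(\beta)} \alpha \big).
\end{align*}
By the observation above, this lies in $Graph(r)$ for all $\alpha, \beta$ if and only if the first component equals $r$ of the second component, which is verbatim the identity (\ref{defor-1}). Next, using (\ref{bicrossed-2}) with $x = r(\alpha)$, $y = r(\beta)$, $z = r(\gamma)$, the $\mathfrak{g}$-component of $\llbracket (r(\alpha),\alpha), (r(\beta),\beta), (r(\gamma),\gamma) \rrbracket_\Join$ is the left-hand side of (\ref{defor-2}). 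Its $\mathfrak{h}$-component is $\llbracket \alpha, \beta, \gamma \rrbracket' + D_{\rho,\mu}(r(\alpha), r(\beta))\gamma + \mu(r(\beta), r(\gamma))\alpha - \mu(r(\alpha), r(\gamma))\beta$. So closure under the ternary bracket is exactly (\ref{defor-2}).

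Both directions follow at once. If $r$ is a deformation map, the two closure conditions hold. The subspace $Graph(r)$ then inherits the Lie-Yamaguti identities from $\mathfrak{g} \Join \mathfrak{h}$ (Theorem \ref{thm-bicrossed}), so it is a subalgebra. Conversely, closure gives (\ref{defor-1}) and (\ref{defor-2}) for all $\alpha, \beta, \gamma$. There is no real obstacle here. The only point requiring care is matching the order of the terms $\nu(\beta,\gamma)x - \nu(\alpha,\gamma)y$ and $\mu(y,z)\alpha - \mu(x,z)\beta$ in (\ref{bicrossed-2}) with those in (\ref{defor-2}), which agree on substitution.
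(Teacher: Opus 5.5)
Your proposal is correct. The paper omits this proof as simple, and your componentwise comparison of $[~,~]_\Join$ and $\llbracket ~, ~, ~ \rrbracket_\Join$ on graph elements, using the criterion that $(x,\alpha)\in Graph(r)$ if and only if $x = r(\alpha)$, is exactly the computation the paper itself carries out in the proof of Theorem \ref{thm-ccp}(ii).
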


The proof of the above proposition is simple, and thus we omit it here. As a consequence of this result, we obtain the following.

\begin{thm}\label{thm-induced}
    Let $\big(  \mathfrak{g}, \mathfrak{h}, (\rho, \mu), (\psi, \nu)   \big)$ be a matched pair of Lie-Yamaguti algebras and $r : \mathfrak{h} \rightarrow \mathfrak{g}$ be a deformation map on it. 
    
    (i) Then the vector space $\mathfrak{h}$ inherits a new Lie-Yamaguti algebra structure $(\mathfrak{h}, [~,~]'_r , \llbracket ~, ~, ~ \rrbracket'_r)$ whose operations are given by
    \begin{align*}
        [\alpha, \beta]'_r :=~& [\alpha, \beta]' + \rho_{r(\alpha)} \beta - \rho_{r (\beta)} \alpha,\\
            \llbracket \alpha, \beta, \gamma \rrbracket'_r :=~& \llbracket \alpha, \beta, \gamma \rrbracket' + D_{\rho, \mu} ( r(\alpha), r (\beta)) \gamma + \mu ( r(\beta), r (\gamma)) \alpha - \mu ( r (\alpha), r (\gamma)) \beta, \text{ for } \alpha, \beta, \gamma \in \mathfrak{h}.
        \end{align*} 

\noindent  (This Lie-Yamaguti algebra $(\mathfrak{h}, [~,~]'_r, \llbracket ~, ~, ~ \rrbracket'_r)$ is simply denoted by $\mathfrak{h}_r$, and it is said to be induced by the deformation map $r$.)

        (ii) We define linear maps $\psi_r : \mathfrak{h} \rightarrow \mathrm{End}(\mathfrak{g})$ and $\nu_r : \mathfrak{h} \otimes \mathfrak{h} \rightarrow \mathrm{End}(\mathfrak{g})$ by 
     \begin{align*}
        ( \psi_r)_\alpha x :=~& \psi_\alpha x + [r(\alpha), x ] + r (\rho_x \alpha),\\
         (\nu_r )(\alpha, \beta) x :=~& \nu (\alpha, \beta) x + \llbracket x, r(\alpha), r (\beta) \rrbracket - r \big(  D_{\rho, \mu} (x, r(\alpha)) \beta - \mu (x, r (\beta))\alpha  \big),
     \end{align*}
     for all $\alpha, \beta \in \mathfrak{h}$ and $x \in \mathfrak{g}$. Then the triple $(\mathfrak{g}; \psi_r , \nu_r)$ is a representation of the induced Lie-Yamaguti algebra $\mathfrak{h}_r = (\mathfrak{h}, [~,~]'_r, \llbracket ~, ~, ~ \rrbracket'_r )$.
\end{thm}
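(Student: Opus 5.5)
Part (i) follows from Proposition \ref{prop-graph}, and part (ii) will follow by realizing the bicrossed product as a semidirect product of $\mathfrak{h}_r$ by $\mathfrak{g}$.

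For (i), Proposition \ref{prop-graph} says that $Graph(r)$ is a Lie-Yamaguti subalgebra of $\mathfrak{g} \Join \mathfrak{h}$. The map $\alpha \mapsto (r(\alpha), \alpha)$ is a linear isomorphism $\mathfrak{h} \to Graph(r)$. We transport the restricted operations back along this map. By (\ref{bicrossed-1}), the second component of $[(r(\alpha),\alpha),(r(\beta),\beta)]_\Join$ is exactly $[\alpha,\beta]'_r$. By (\ref{bicrossed-2}), the second component of $\llbracket (r\alpha,\alpha),(r\beta,\beta),(r\gamma,\gamma)\rrbracket_\Join$ is exactly $\llbracket \alpha,\beta,\gamma\rrbracket'_r$. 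The first components are then forced to be $r$ of these, by (\ref{defor-1})--(\ref{defor-2}). Hence $\mathfrak{h}_r \cong Graph(r)$ as Lie-Yamaguti algebras, and (i) follows.

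For (ii), consider the linear automorphism $\Phi : \mathfrak{g} \oplus \mathfrak{h} \to \mathfrak{g} \oplus \mathfrak{h}$, $\Phi(x,\alpha) = (x + r(\alpha), \alpha)$. It maps $\mathfrak{g} \oplus 0$ identically onto itself and $0 \oplus \mathfrak{h}$ onto $Graph(r)$. Pull back the bicrossed product structure along $\Phi$ to get a Lie-Yamaguti algebra $E_r$ on $\mathfrak{g} \oplus \mathfrak{h}$. In $E_r$, $\mathfrak{g}$ is a subalgebra with its original operations, and $\mathfrak{h}$ is a subalgebra carrying the structure $\mathfrak{h}_r$.

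The key step is to compute the mixed operations in $E_r$:
\[ [(0,\alpha),(x,0)]_{E_r} = \Phi^{-1}[(r\alpha,\alpha),(x,0)]_\Join \quad\text{and}\quad \llbracket (x,0),(0,\alpha),(0,\beta)\rrbracket_{E_r}. \]
Using $\Phi^{-1}(y,\gamma) = (y - r(\gamma), \gamma)$, the first one comes out as
\[ \big([r\alpha, x] + \psi_\alpha x + r(\rho_x\alpha), \, -\rho_x \alpha\big) = \big((\psi_r)_\alpha x, \, -\rho_x\alpha\big). \]
Similarly, expanding (\ref{bicrossed-2}) gives the $\mathfrak{g}$-component
\[ \nu(\alpha,\beta)x + \llbracket x, r\alpha, r\beta\rrbracket - r\big(D_{\rho,\mu}(x,r\alpha)\beta - \mu(x,r\beta)\alpha\big) = \nu_r(\alpha,\beta)x. \]
Here one uses the skew-symmetry of $D$ and the vanishing of the terms involving $\llbracket\,,\,,\,\rrbracket'$ and $\mu(\cdot,\cdot)$ applied to $0$. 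So, by the recipe (\ref{equa1})--(\ref{equa3}) from the proof of Theorem \ref{thm-factorization}, the actions of $\mathfrak{h}_r$ on $\mathfrak{g}$ are precisely $(\psi_r, \nu_r)$.

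The main obstacle is checking that $E_r$ is strongly factorized, i.e.\ $\llbracket \mathfrak{g}, \mathfrak{h}, \mathfrak{h}\rrbracket_{E_r} \subset \mathfrak{g}$ and $\llbracket \mathfrak{h}, \mathfrak{g}, \mathfrak{g}\rrbracket_{E_r} \subset \mathfrak{h}$. The second inclusion is fine: $\Phi^{-1}$ preserves the $\mathfrak{h}$-component, and the $\mathfrak{g}$-component $\llbracket r\alpha, x, y\rrbracket - r(\mu(x,y)\alpha) + \cdots$ need not vanish, but this does not matter. The $\mathfrak{h}$-component of $\llbracket (x,0),(0,\alpha),(0,\beta)\rrbracket_{E_r}$ is $D_{\rho,\mu}(x,r\alpha)\beta - \mu(x,r\beta)\alpha$, which need not vanish, so $E_r$ may fail to be strongly factorized. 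Rather than invoke Theorem \ref{thm-factorization}, I would argue directly.

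Only the $\mathfrak{g}$-components matter for (ii). The representation axioms for $(\mathfrak{g};\psi_r,\nu_r)$ are exactly the Lie-Yamaguti identities of $E_r$ evaluated on inputs with at most one entry from $\mathfrak{g}$, all other entries from $\mathfrak{h}$, projected to $\mathfrak{g}$. Expressions with one $\mathfrak{g}$-input are linear in $x$, so their $\mathfrak{g}$-parts only involve $\psi_r$, $\nu_r$, $D_{\psi_r,\nu_r}$ and the $\mathfrak{h}_r$-operations. Concretely, the cleanest route is: define the semidirect product $\mathfrak{h}_r \ltimes \mathfrak{g}$ via $(\psi_r,\nu_r)$. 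Then observe that the quotient-like projection of $E_r$'s identities with one $\mathfrak{g}$-slot coincides with the semidirect identities, because the $\mathfrak{h}$-components of mixed terms, when fed back into an operation with an $\mathfrak{h}$-element, land in $\mathfrak{h}\otimes\mathfrak{h}$ terms. These are then compared in the $\mathfrak{g}$-component via $\mathfrak{h}_r$'s structure, so they contribute to the $\mathfrak{g}$-part only through already-accounted expressions. If this bookkeeping gets messy, the fallback is to verify the five representation identities directly from (\ref{defor-1})--(\ref{defor-2}) and (\ref{mp1})--(\ref{mp24}).
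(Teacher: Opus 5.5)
Your part (i) is the paper's argument. For part (ii) you take a different route from the paper. The paper only asserts that the five representation identities for $(\mathfrak{g};\psi_r,\nu_r)$ follow by direct calculation. You instead read $(\psi_r,\nu_r)$ off a Lie-Yamaguti algebra containing $\mathfrak{h}_r$ as a subalgebra. Your computations of the $\mathfrak{g}$-components of $[(0,\alpha),(x,0)]_{E_r}$ and $\llbracket (x,0),(0,\alpha),(0,\beta)\rrbracket_{E_r}$ are correct, and the route works.

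However, the paragraph that is supposed to carry the argument is not a proof. The phrase ``contribute to the $\mathfrak{g}$-part only through already-accounted expressions'' also misdescribes what happens. The precise argument is a quotient of a restricted adjoint representation:
\begin{itemize}
\item Since $0\oplus\mathfrak{h}$ is a subalgebra of $E_r$ with induced structure $\mathfrak{h}_r$, restricting the adjoint representation of $E_r$ (which holds for all elements of $E_r$) gives a representation of $\mathfrak{h}_r$ on $E_r$.
\item The subspace $0\oplus\mathfrak{h}$ is invariant under $[\alpha,-]$ and $\llbracket -,\alpha,\beta\rrbracket$, so $E_r/\mathfrak{h}\cong\mathfrak{g}$ is a quotient representation.
\item Its structure maps are $x\mapsto \mathrm{pr}_{\mathfrak{g}}[(0,\alpha),(x,0)]_{E_r}$ and $x\mapsto\mathrm{pr}_{\mathfrak{g}}\llbracket (x,0),(0,\alpha),(0,\beta)\rrbracket_{E_r}$. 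Your computation identifies these with $\psi_r$ and $\nu_r$.
\end{itemize}
In your bookkeeping language: the $\mathfrak{h}$-components of intermediate mixed terms are fed into operations whose other entries all lie in $\mathfrak{h}$. They therefore stay in $\mathfrak{h}$ and contribute \emph{zero} to the $\mathfrak{g}$-projection. With this, no fallback to direct verification is needed, and the failure of strong factorization is irrelevant.

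Two smaller points. First, your claim that $\llbracket\mathfrak{h},\mathfrak{g},\mathfrak{g}\rrbracket_{E_r}\subset\mathfrak{h}$ ``is fine'' is wrong. The nonzero $\mathfrak{g}$-component $\llbracket r(\alpha),x,y\rrbracket-r(\mu(x,y)\alpha)$ you exhibit is exactly what violates it, though this is harmless for the argument. Second, $E_r$ is unnecessary. You can work directly in $\mathfrak{g}\Join\mathfrak{h}$ with the subalgebra $Graph(r)$ and the identification $(\mathfrak{g}\Join\mathfrak{h})/Graph(r)\cong\mathfrak{g}$, $(y,\gamma)\mapsto y-r(\gamma)$; your formulas are precisely this.

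As for what each approach buys: the paper's route is elementary but requires a long check using (\ref{defor-1})--(\ref{defor-2}) and most of (\ref{mp1})--(\ref{mp24}), which the paper does not write out. Your route uses only part (i) and the fact that $\mathfrak{g}\Join\mathfrak{h}$ is Lie-Yamaguti. It explains where $\psi_r$ and $\nu_r$ come from. It also yields the formula for $D_{\psi_r,\nu_r}$ in the subsequent Remark for free, by projecting $\llbracket (r(\alpha),\alpha),(r(\beta),\beta),(x,0)\rrbracket_\Join$ along $Graph(r)$.
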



\begin{proof} (i) This part follows from Proposition \ref{prop-graph} as $Graph (r)$ is linearly isomorphic to the space $\mathfrak{h}$ via the identification $(r(\alpha), \alpha) \leftrightarrow \alpha$, for $\alpha \in \mathfrak{h}$.

(ii) It follows from a direct calculation that the maps $\psi_r$ and $\nu_r$ satisfy the five conditions of a representation (a similar calculation is given in \cite{zhao-qiao,zhao-xu-qiao} when $r$ is a relative Rota-Baxter operator, and in the reference \cite{sun-chen} when $r$ is a derivation or a crossed homomorphism).
\end{proof}

\begin{remark}
    Note that, associated to the above representation $(\mathfrak{g}; \psi_r, \nu_r)$ of the induced Lie-Yamaguti algebra $\mathfrak{h}_r$, the corresponding map $D_{\psi_r, \nu_r} : \wedge^2 \mathfrak{h} \rightarrow \mathrm{End} (\mathfrak{g})$ is given by
    \begin{align*}
      D_{\psi_r, \nu_r} (\alpha, \beta) x= D_{\psi, \nu} (\alpha, \beta) x + \llbracket r(\alpha), r (\beta), x \rrbracket  + r \big(  \mu ( r (\alpha), x) \beta - \mu ( r (\beta), x) \alpha   \big), \text{ for } \alpha, \beta \in \mathfrak{h} \text{ and } x \in \mathfrak{g}.
    \end{align*}
\end{remark}

\medskip

In the following, we focus on the classifying complements problem in the context of Lie-Yamaguti algebras. Let $(E, [~, ~]_E, \llbracket ~, ~, ~ \rrbracket_E)$ be a Lie-Yamaguti algebra and $\mathfrak{g} \subset E$ be a Lie-Yamaguti subalgebra of it. Then a $\mathfrak{g}$-{\bf complement} in $E$ is a Lie-Yamaguti subalgebra $\mathfrak{h} \subset E$ such that $E= \mathfrak{g} + \mathfrak{h}$ and $\mathfrak{g} \cap \mathfrak{h} = \{ 0 \}$. It is said to be a {\bf strong} $\mathfrak{g}$-{\bf complement} if additionally $\llbracket \mathfrak{g}, \mathfrak{h}, \mathfrak{h} \rrbracket_E \subset \mathfrak{g}$ and $ \llbracket \mathfrak{h}, \mathfrak{g}, \mathfrak{g} \rrbracket_E \subset \mathfrak{h}$. In view of Theorem \ref{thm-factorization}, we have the following result.

\begin{prop}\label{prop-ly1}
    Let $\mathfrak{g}$ be a Lie-Yamaguti subalgebra of $E$ and let $\mathfrak{h}$ be any strong $\mathfrak{g}$-complement in $E$. Then there exists a matched pair of Lie-Yamaguti algebras $(\mathfrak{g}, \mathfrak{h}, (\rho, \mu), (\psi, \nu))$ such that $E \cong \mathfrak{g} \Join \mathfrak{h}$ as Lie-Yamaguti algebras.
\end{prop}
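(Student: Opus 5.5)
The plan is to reduce Proposition \ref{prop-ly1} directly to Theorem \ref{thm-factorization}. The hypotheses say exactly that $E$ strongly factorizes through $\mathfrak{g}$ and $\mathfrak{h}$.

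\begin{itemize}
\item $\mathfrak{g}$ is a Lie-Yamaguti subalgebra of $E$ by assumption, and $\mathfrak{h}$ is one because a $\mathfrak{g}$-complement is by definition a Lie-Yamaguti subalgebra. We equip both with the operations restricted from $[~,~]_E$ and $\llbracket ~,~,~ \rrbracket_E$, so that $(\mathfrak{g}, [~,~], \llbracket ~,~,~ \rrbracket)$ and $(\mathfrak{h}, [~,~]', \llbracket ~,~,~ \rrbracket')$ are Lie-Yamaguti algebras contained in $E$.
\item The complement conditions $E = \mathfrak{g} + \mathfrak{h}$ and $\mathfrak{g} \cap \mathfrak{h} = \{0\}$ are condition (ii) of factorization.
\item The strong-complement conditions $\llbracket \mathfrak{g}, \mathfrak{h}, \mathfrak{h} \rrbracket_E \subset \mathfrak{g}$ and $\llbracket \mathfrak{h}, \mathfrak{g}, \mathfrak{g} \rrbracket_E \subset \mathfrak{h}$ are precisely the extra requirements of strong factorization.
\end{itemize}

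Hence $E$ strongly factorizes through $\mathfrak{g}$ and $\mathfrak{h}$. The ``only if'' direction of Theorem \ref{thm-factorization} then gives a matched pair $(\mathfrak{g}, \mathfrak{h}, (\rho, \mu), (\psi, \nu))$ with $E \cong \mathfrak{g} \Join \mathfrak{h}$, which is the claim.

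For later use (the canonical matched pair), I would record the explicit structure maps from that proof. Writing each element of $E$ uniquely as $x + \alpha$ with $x \in \mathfrak{g}$ and $\alpha \in \mathfrak{h}$, they are read off from components:
\begin{itemize}
\item $[x, \alpha]_E = -\psi_\alpha x + \rho_x \alpha$;
\item $\llbracket \alpha, x, y \rrbracket_E = \mu(x, y)\alpha$;
\item $\llbracket x, \alpha, \beta \rrbracket_E = \nu(\alpha, \beta) x$.
\end{itemize}
The last two identities make sense only because of the strong hypothesis, which places these triple brackets entirely in $\mathfrak{h}$ and $\mathfrak{g}$ respectively. The isomorphism $\mathfrak{g} \Join \mathfrak{h} \to E$ is $(x, \alpha) \mapsto x + \alpha$.

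There is no real obstacle, since all the work was done in Theorem \ref{thm-factorization}. The one point needing care is that the remaining ternary products, such as $\llbracket x, y, \alpha \rrbracket_E$ and $\llbracket \alpha, \beta, x \rrbracket_E$, are also expressed through $\rho, \mu, \psi, \nu$ and the maps $D$, matching (\ref{bicrossed-2}).
\begin{itemize}
\item Skew-symmetry gives $\llbracket x, \alpha, y \rrbracket_E = -\llbracket \alpha, x, y \rrbracket_E$.
\item For $\llbracket x, y, \alpha \rrbracket_E$ I would use the first Lie-Yamaguti identity (the cyclic one) to write it as $D_{\rho,\mu}(x,y)\alpha$. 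This is the step I would double-check.
\item $\llbracket \alpha, \beta, x \rrbracket_E$ is treated symmetrically to give $D_{\psi,\nu}(\alpha,\beta)x$.
\end{itemize}
This check is already part of the proof of Theorem \ref{thm-factorization}, so I would simply cite it.
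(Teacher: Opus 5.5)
Your reduction is exactly the paper's. The paper proves this proposition only by noting that a strong $\mathfrak{g}$-complement means $E$ strongly factorizes through $\mathfrak{g}$ and $\mathfrak{h}$, and then citing Theorem~\ref{thm-factorization}. However, the step you marked for double-checking really does fail. Citing that theorem does not repair it, because its ``only if'' half rests on the same unchecked identification.

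The strong hypotheses, together with skew-symmetry in the first two slots, control $\llbracket \mathfrak{g},\mathfrak{h},\mathfrak{h}\rrbracket_E$, $\llbracket \mathfrak{h},\mathfrak{g},\mathfrak{h}\rrbracket_E$, $\llbracket \mathfrak{h},\mathfrak{g},\mathfrak{g}\rrbracket_E$ and $\llbracket \mathfrak{g},\mathfrak{h},\mathfrak{g}\rrbracket_E$. They say nothing about $\llbracket \mathfrak{g},\mathfrak{g},\mathfrak{h}\rrbracket_E$ or $\llbracket \mathfrak{h},\mathfrak{h},\mathfrak{g}\rrbracket_E$. Expanding the cyclic identity on $(x,y,\alpha)$ with your maps gives
\[
\llbracket x,y,\alpha\rrbracket_E = D_{\rho,\mu}(x,y)\alpha + \big(\psi_\alpha[x,y]-[\psi_\alpha x,y]-[x,\psi_\alpha y]-\psi_{\rho_y\alpha}x+\psi_{\rho_x\alpha}y\big).
\]
So only the $\mathfrak{h}$-component equals $D_{\rho,\mu}(x,y)\alpha$, while the $\mathfrak{g}$-component is exactly the defect of (\ref{mp10}). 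Symmetrically, the $\mathfrak{h}$-component of $\llbracket \alpha,\beta,x\rrbracket_E$ is the defect of (\ref{mp1}). Nothing forces these defects to vanish, whereas in every bicrossed product they do vanish by (\ref{bicrossed-2}).

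In fact the proposition is false as stated. Let $E$ have basis $e_1,e_2,e_3,\alpha$ with
\[
[e_1,e_2]=e_3,\qquad [\alpha,e_3]=e_3,\qquad \llbracket e_1,e_2,\alpha\rrbracket=-\llbracket e_2,e_1,\alpha\rrbracket=e_3,
\]
and all other products of basis vectors zero. Write $u_\alpha$ for the $\alpha$-coordinate of $u\in E$. The defining identities hold:
\begin{itemize}
\item All products lie in $\mathbf{k}e_3$, and $e_3$ in any slot kills the ternary product, so the second and fourth defining identities hold trivially.
\item In (\ref{eqn-3}) the left side is zero, and the two terms on the right cancel because $[u,e_3]=u_\alpha e_3$.
\item The left side of the cyclic identity is alternating. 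Among triples of distinct basis vectors only $(e_1,e_2,\alpha)$ gives nonzero terms, namely $[[e_1,e_2],\alpha]+\llbracket e_1,e_2,\alpha\rrbracket=-e_3+e_3=0$.
\end{itemize}

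Take $\mathfrak{g}=\mathrm{span}\{e_1,e_2,e_3\}$ (the Heisenberg Lie algebra with zero ternary product) and $\mathfrak{h}=\mathbf{k}\alpha$. Then $\llbracket\mathfrak{g},\mathfrak{h},\mathfrak{h}\rrbracket_E=\llbracket\mathfrak{h},\mathfrak{g},\mathfrak{g}\rrbracket_E=0$, so $\mathfrak{h}$ is a strong $\mathfrak{g}$-complement. Your maps are $\rho=\mu=\nu=0$, $\psi_\alpha e_3=e_3$, $\psi_\alpha e_1=\psi_\alpha e_2=0$, and these violate (\ref{mp10}) at $(e_1,e_2)$.

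Nor is $E$ isomorphic to any bicrossed product $\mathfrak{g}\Join\mathfrak{h}$, even abstractly. A subalgebra of $E$ isomorphic to $\mathfrak{g}$ has a one-dimensional central derived algebra inside $[E,E]=\mathbf{k}e_3$, so $e_3$ is central in it. Since $[u,e_3]=u_\alpha e_3$, this forces the subalgebra to be $\mathfrak{g}$ itself. The image $\mathbf{k}\alpha'$ of $\mathfrak{h}$ would then need $\llbracket e_1,e_2,\alpha'\rrbracket_E=\alpha'_\alpha e_3\in\mathbf{k}\alpha'$. That is impossible since $\alpha'\notin\mathfrak{g}$.

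So what is missing is a hypothesis, not an argument. One must also assume $\llbracket\mathfrak{g},\mathfrak{g},\mathfrak{h}\rrbracket_E\subset\mathfrak{h}$ and $\llbracket\mathfrak{h},\mathfrak{h},\mathfrak{g}\rrbracket_E\subset\mathfrak{g}$. Given the strong conditions, this is equivalent to (\ref{mp10}) and (\ref{mp1}) holding for the induced maps. With that extra hypothesis, every component of $E$ takes the bicrossed form (\ref{bicrossed-1})--(\ref{bicrossed-2}), which is what your argument actually uses.
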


Let $\mathfrak{g}$ be a Lie-Yamaguti subalgebra of $E$ and $\mathfrak{h}$ be any strong $\mathfrak{g}$-complement in $E$. Then the matched pair $(\mathfrak{g}, \mathfrak{h}, (\rho, \mu), (\psi, \nu))$ constructed in the above proposition is called the {\em canonical matched pair}.

\begin{thm}\label{thm-ccp}
    Let $\mathfrak{g}$ be a Lie-Yamaguti subalgebra of $E$, and $\mathfrak{h}$ be any strong $\mathfrak{g}$-complement in $E$ with the associated canonical matched pair $(\mathfrak{g}, \mathfrak{h}, (\rho, \mu), (\psi, \nu))$. 

    (i) Then for any deformation map $r : \mathfrak{h} \rightarrow \mathfrak{g}$, the graph $Graph (r) = \{ (r (\alpha), \alpha) ~\! | ~ \! \alpha \in \mathfrak{h} \}$ is also a $\mathfrak{g}$-complement in $E$.

    (ii) Conversely, let $\overline{\mathfrak{h}}$ be any other $\mathfrak{g}$-complement in $E$. Then there exists a deformation map $r: \mathfrak{h} \rightarrow \mathfrak{g}$ in the canonical matched pair such that $\overline{\mathfrak{h}} = Graph (r)$ as Lie-Yamaguti algebras.
\end{thm}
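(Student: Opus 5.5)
Throughout, we identify $E$ with the bicrossed product $\mathfrak{g} \Join \mathfrak{h}$ of the canonical matched pair via Proposition \ref{prop-ly1}. Concretely, every element of $E$ is written uniquely as $(x, \alpha)$ with $x \in \mathfrak{g}$ and $\alpha \in \mathfrak{h}$. Under this identification, $[~,~]_E$ and $\llbracket ~, ~, ~ \rrbracket_E$ become $[~,~]_\Join$ and $\llbracket ~, ~, ~ \rrbracket_\Join$ from (\ref{bicrossed-1}) and (\ref{bicrossed-2}), and $\mathfrak{g}$, $\mathfrak{h}$ correspond to $\mathfrak{g} \oplus \{0\}$ and $\{0\} \oplus \mathfrak{h}$.

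For (i), Proposition \ref{prop-graph} already shows that $Graph(r)$ is a Lie-Yamaguti subalgebra of $\mathfrak{g} \Join \mathfrak{h}$, hence of $E$. It remains to check the two complement conditions. If $(x, 0) = (r(\alpha), \alpha)$, then $\alpha = 0$ and so $x = r(0) = 0$; thus $\mathfrak{g} \cap Graph(r) = \{0\}$. For the sum, any element decomposes as
\[
(x, \alpha) = (x - r(\alpha), 0) + (r(\alpha), \alpha),
\]
so $E = \mathfrak{g} + Graph(r)$. Note that only a $\mathfrak{g}$-complement is claimed, not a strong one, so no further condition needs checking.

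For (ii), let $\overline{\mathfrak{h}}$ be a $\mathfrak{g}$-complement. Let $p_\mathfrak{g}$ and $p_\mathfrak{h}$ denote the projections of $E = \mathfrak{g} \oplus \mathfrak{h}$ onto the two factors, and consider the restriction $p_\mathfrak{h}|_{\overline{\mathfrak{h}}} : \overline{\mathfrak{h}} \rightarrow \mathfrak{h}$. It is injective: its kernel is $\overline{\mathfrak{h}} \cap \mathfrak{g} = \{0\}$. It is surjective: given $\alpha \in \mathfrak{h}$, write $(0, \alpha) = (y, 0) + \overline{a}$ with $\overline{a} \in \overline{\mathfrak{h}}$, using $E = \mathfrak{g} + \overline{\mathfrak{h}}$; then $p_\mathfrak{h}(\overline{a}) = \alpha$. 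In infinite dimensions this direct argument is still valid, so no dimension count is needed. Let $\theta : \mathfrak{h} \rightarrow \overline{\mathfrak{h}}$ be the inverse, and define
\[
r := p_\mathfrak{g} \circ \theta : \mathfrak{h} \rightarrow \mathfrak{g}.
\]
Then $\theta(\alpha) = (r(\alpha), \alpha)$, so $\overline{\mathfrak{h}} = Graph(r)$ as subspaces of $E$. Since $\overline{\mathfrak{h}}$ is a Lie-Yamaguti subalgebra of $E \cong \mathfrak{g} \Join \mathfrak{h}$, the converse direction of Proposition \ref{prop-graph} gives that $r$ is a deformation map in the canonical matched pair. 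The equality $\overline{\mathfrak{h}} = Graph(r)$ holds as subalgebras, so in particular as Lie-Yamaguti algebras. Via $\theta$, $\overline{\mathfrak{h}}$ is moreover isomorphic to the induced algebra $\mathfrak{h}_r$ of Theorem \ref{thm-induced}.

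The only delicate point is bookkeeping rather than computation. We must be sure that the isomorphism $E \cong \mathfrak{g} \Join \mathfrak{h}$ of Theorem \ref{thm-factorization} is the identity on underlying vector spaces under the decomposition $E = \mathfrak{g} \oplus \mathfrak{h}$. Only then do subalgebras and graphs in $E$ correspond exactly to those in the bicrossed product. This follows from how $\rho, \mu, \psi, \nu$ were defined in (\ref{equa1})--(\ref{equa3}), together with the strong complement hypothesis, which forces the mixed brackets and ternary products to have exactly the bicrossed form. I would verify this identification once at the start; after that, both parts reduce to the linear-algebra arguments above.
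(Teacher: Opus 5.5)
Your proposal is correct and is essentially the paper's proof: your $\theta=(p_{\mathfrak h}|_{\overline{\mathfrak h}})^{-1}$ is the paper's isomorphism $b$ (whose inverse the paper calls $d$), and your $r=p_{\mathfrak g}\circ\theta$ is the paper's $r=-a$; the only difference is that you cite the converse direction of Proposition~\ref{prop-graph}, whereas the paper re-derives (\ref{defor-1}) and (\ref{defor-2}) inline by writing $[b(\alpha),b(\beta)]_\Join$ and $\llbracket b(\alpha),b(\beta),b(\gamma)\rrbracket_\Join$ as $(r(\theta),\theta)$ and comparing components. One caution on your closing remark: take the on-the-nose identification $E=\mathfrak g\Join\mathfrak h$ from Proposition~\ref{prop-ly1}, which the statement presupposes and the paper's proof also uses, and not from a quick check against (\ref{equa1})--(\ref{equa3}), because the strong-complement conditions do not control $\llbracket\mathfrak g,\mathfrak g,\mathfrak h\rrbracket_E$ or $\llbracket\mathfrak h,\mathfrak h,\mathfrak g\rrbracket_E$ (by the first Lie-Yamaguti identity the $\mathfrak g$-component of $\llbracket x,y,\gamma\rrbracket_E$ is exactly the defect in (\ref{mp10}), which those conditions alone do not make vanish).
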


\begin{proof}
(i) It has been shown in Proposition \ref{prop-graph} that $Graph (r)$ is a Lie-Yamaguti subalgebra of $\mathfrak{g} \Join \mathfrak{h} \cong E$. Moreover, it is easy to see that $E = \mathfrak{g} + Graph (r)$ and $\mathfrak{g} \cap Graph (r) = \{ 0\}$. Hence, $Graph (r)$ is a $\mathfrak{g}$-complement in $E$.

    (ii) Since $\overline{\mathfrak{h}}$ is a $\mathfrak{g}$-complement in $E$, we have $E = \mathfrak{g} \oplus \mathfrak{h} = \mathfrak{g} \oplus \overline{\mathfrak{h}}$. Hence, there exist linear maps
    \begin{align*}
        a : \mathfrak{h} \rightarrow \mathfrak{g}, \quad b : \mathfrak{h} \rightarrow \overline{\mathfrak{h}}, \quad c : \overline{\mathfrak{h}} \rightarrow \mathfrak{g} ~~~~ \text{ and } ~~~~ d : \overline{\mathfrak{h}} \rightarrow \mathfrak{h}
    \end{align*}
    such that for any $\alpha \in \mathfrak{h}$ and $s \in \overline{\mathfrak{h}}$,
    \begin{align*}
        \alpha = a (\alpha) \oplus b (\alpha) ~~~~ \text{ and } ~~~~ s = c(s) \oplus d (s).
    \end{align*}
    For any $\alpha \in \mathfrak{h}$, the element $b (\alpha) \in \overline{ \mathfrak{h}}$ can be written as
    \begin{align*}
        - a (\alpha) \oplus \alpha = b (\alpha) = c (b (\alpha)) +  d(b (\alpha)).
    \end{align*}
    Hence, by the uniqueness of decomposition in a direct sum, we get that $d ( b (\alpha)) = \alpha$, for all $\alpha \in \mathfrak{h}$. In a similar way, for any $\overline{\alpha} \in \overline{\mathfrak{h}}$, the element $d (\overline{\alpha}) \in \mathfrak{h}$ can be written as
       $ - c (\overline{\alpha}) \oplus \overline{\alpha} = d (\overline{\alpha}) = a (d (\overline{\alpha})) + b (d (\overline{\alpha}))$, which implies that $b (d (\overline{\alpha})) = \overline{\alpha}$, for all $\overline{\alpha} \in \overline{\mathfrak{h}}$. This shows that the map $b: \mathfrak{h} \rightarrow \overline{\mathfrak{h}}$ is a linear isomorphism. We now claim that $r = -a$ is a deformation map in the canonical matched pair. First, for any $\alpha , \beta \in \mathfrak{h}$, we have from (\ref{bicrossed-1}) that
       \begin{align}\label{comparing-last}
            [  (r(\alpha), \alpha) , ( r (\beta), \beta)]_\Join = \big(  [r(\alpha), r(\beta)] + \psi_\alpha r (\beta) - \psi_\beta r (\alpha) ~ \!, ~ \! [\alpha, \beta]' + \rho_{r (\alpha)} \beta - \rho_{r (\beta)} \alpha   \big).
       \end{align}
  On the other hand, we see that
  \begin{align*}
      [ (r(\alpha), \alpha) , ( r (\beta), \beta)]_\Join = [ b(\alpha), b (\beta)]_\Join = b (\theta) = (r (\theta), \theta),
  \end{align*}
  for some unique $\theta \in \mathfrak{h}$. The second equality of the above identity holds as $b (\alpha), b (\beta) \in \overline{\mathfrak{h}}$ and $\overline{\mathfrak{h}}$ is a Lie-Yamaguti subalgebra of $E \cong \mathfrak{g} \Join \mathfrak{h}$. Comparing the last expression with that of (\ref{comparing-last}), we get that the map $r$ satisfies the identity (\ref{defor-1}). Similarly, for $\alpha, \beta, \gamma \in \mathfrak{h}$, we have from (\ref{bicrossed-2}) that
  \begin{align*}
      \llbracket (r (\alpha), \alpha), (r (\beta), \beta) , (r (\gamma), \gamma) \rrbracket_\Join =~& \big( \llbracket r (\alpha),  r (\beta), r (\gamma) \rrbracket+ D_{\psi, \nu} (\alpha, \beta) r (\gamma) + \nu (\beta, \gamma) r (\alpha) - \nu (\alpha, \gamma) r (\beta)  ~ \!, \nonumber \\
         & \quad \llbracket \alpha, \beta, \gamma \rrbracket' + D_{\rho, \mu} ( r (\alpha), r (\beta)) \gamma + \mu ( r (\beta),  r (\gamma)) \alpha - \mu ( r (\alpha), r (\gamma)) \beta \big).
  \end{align*}
  On the other hand,
  \begin{align*}
      \llbracket (r (\alpha), \alpha), (r (\beta), \beta) , (r (\gamma), \gamma) \rrbracket_\Join = \llbracket b (\alpha), b (\beta), b (\gamma) \rrbracket_\Join = b (\theta')= (r (\theta'), \theta'),
  \end{align*}
  for some unique $\theta' \in \mathfrak{h}$. Comparing this with the above identity, we get that $r$ also satisfies the identity (\ref{defor-2}). Hence, our claim follows. Finally, 
  \begin{align*}
      Graph (r) = \{ ( - a (\alpha), \alpha ) ~\! | ~\! \alpha \in \mathfrak{h} \} = \{ b (\alpha) ~ \! | ~ \! \alpha \in \mathfrak{h} \} = \overline{\mathfrak{h}}.
  \end{align*}
  This completes the proof.
\end{proof}

\begin{defn}\label{defn-equiv}
    Let $(\mathfrak{g}, \mathfrak{h}, (\rho, \mu), (\psi, \nu))$ be a matched pair of Lie-Yamaguti algebras. Two deformation maps $r, r' :\mathfrak{h} \rightarrow \mathfrak{g}$ are said to be {\bf equivalent} (and we write $r \sim r'$) if there exists a linear isomorphism $\sigma : \mathfrak{h} \rightarrow \mathfrak{h}$ that satisfies
    \begin{align*}
        \sigma ([\alpha, \beta]') - [\sigma (\alpha), \sigma (\beta)]' = \rho_{r' (\sigma (\alpha))} \sigma (\beta) - \rho_{r' (\sigma (\beta))} \sigma (\alpha) - \sigma ( \rho_{r (\alpha)} \beta -  \rho_{r (\beta)} \alpha  ),
        \end{align*}
        \begin{align*}
        \sigma (\llbracket \alpha, \beta, \gamma \rrbracket' ) ~&- \llbracket \sigma (\alpha), \sigma (\beta), \sigma (\gamma) \rrbracket' = D_{\rho, \mu} \big( r' (\sigma (\alpha)), r' (\sigma (\beta))  \big) \sigma (\gamma) + \mu \big(   r' (\sigma (\beta)),  r' (\sigma (\gamma))     \big) \sigma (\alpha) \\
        &- \mu \big(   r' (\sigma (\alpha)),  r' (\sigma (\gamma))     \big) \sigma (\beta) - \sigma \big(   D_{\rho, \mu} ( r(\alpha), r (\beta)) \gamma + \mu ( r(\beta), r (\gamma)) \alpha - \mu ( r (\alpha), r (\gamma)) \beta    \big),
    \end{align*}
    for all $\alpha, \beta, \gamma \in \mathfrak{h}$.
\end{defn}

We are now ready to prove the main result of this section that classifies the set of all isomorphism classes of complements.


\begin{thm}\label{thm-ccp2}
Let $\mathfrak{g}$ be a Lie-Yamaguti subalgebra of $E$, and $\mathfrak{h}$ be any strong $\mathfrak{g}$-complement in $E$ with the associated canonical matched pair $(\mathfrak{g}, \mathfrak{h},  (\rho, \mu), (\psi, \nu))$.
\begin{itemize}
    \item[(i)] Then $\sim$ is an equivalence relation on the set $\mathcal{DM} (\mathfrak{g}, \mathfrak{h})$.
    \item[(ii)] Moreover, there is a bijection between the isomorphism classes of all $\mathfrak{g}$-complements in $E$ and the space $ \mathcal{DM} (\mathfrak{g}, \mathfrak{h}) / \sim $. In particular, the factorization index of $\mathfrak{g}$ in $E$ is given by
    \begin{align*}
        [E : \mathfrak{g} ]^f = |   \mathcal{DM} (\mathfrak{g}, \mathfrak{h}) / \sim  |.
    \end{align*}
\end{itemize}
\end{thm}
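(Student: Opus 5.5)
The key observation is that the two displayed identities in Definition \ref{defn-equiv} say exactly that $\sigma$ is a homomorphism
\begin{align*}
\sigma : \mathfrak{h}_r = (\mathfrak{h}, [~,~]'_r, \llbracket ~, ~, ~ \rrbracket'_r) \longrightarrow \mathfrak{h}_{r'} = (\mathfrak{h}, [~,~]'_{r'}, \llbracket ~, ~, ~ \rrbracket'_{r'})
\end{align*}
of the induced Lie-Yamaguti algebras of Theorem \ref{thm-induced}(i). Indeed, moving all $\sigma(\cdots)$ terms to one side, the first identity reads $\sigma([\alpha,\beta]'_r) = [\sigma(\alpha),\sigma(\beta)]'_{r'}$, and the second reads $\sigma(\llbracket \alpha,\beta,\gamma\rrbracket'_r) = \llbracket \sigma(\alpha),\sigma(\beta),\sigma(\gamma)\rrbracket'_{r'}$. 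So $r \sim r'$ if and only if $\mathfrak{h}_r \cong \mathfrak{h}_{r'}$ as Lie-Yamaguti algebras.

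Part (i) then follows formally. Reflexivity is witnessed by $\sigma = \mathrm{id}$. For symmetry, the inverse of a bijective homomorphism is again a homomorphism. For transitivity, compositions of homomorphisms are homomorphisms.

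For part (ii), I would identify $\mathfrak{h}_r$ with the subalgebra $Graph(r)$ of $\mathfrak{g} \Join \mathfrak{h} \cong E$. The linear isomorphism $\theta_r : \mathfrak{h} \to Graph(r)$, $\alpha \mapsto (r(\alpha), \alpha)$, is a Lie-Yamaguti isomorphism $\mathfrak{h}_r \to Graph(r)$. This holds because the $\mathfrak{h}$-component of $[\theta_r\alpha, \theta_r\beta]_\Join$ is $[\alpha,\beta]'_r$, and since $Graph(r)$ is closed under the brackets (Proposition \ref{prop-graph}), the $\mathfrak{g}$-component is forced to equal $r([\alpha,\beta]'_r)$; the same argument applies to the ternary bracket. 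I then define the map
\begin{align*}
\Theta : \mathcal{DM}(\mathfrak{g},\mathfrak{h})/\sim \;\longrightarrow\; \{\mathfrak{g}\text{-complements in } E\}/\cong, \qquad [r] \mapsto [Graph(r)].
\end{align*}
This lands in complements by Theorem \ref{thm-ccp}(i), and it is surjective by Theorem \ref{thm-ccp}(ii). It remains to check that $\Theta$ is well defined and injective, that is, $r \sim r'$ if and only if $Graph(r) \cong Graph(r')$. If $\sigma$ realizes $r \sim r'$, then $\theta_{r'} \circ \sigma \circ \theta_r^{-1}$ is an isomorphism $Graph(r) \to Graph(r')$. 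Conversely, given an isomorphism $\Phi : Graph(r) \to Graph(r')$, the map $\sigma := \theta_{r'}^{-1} \circ \Phi \circ \theta_r$ is an isomorphism $\mathfrak{h}_r \to \mathfrak{h}_{r'}$, hence satisfies Definition \ref{defn-equiv}. The factorization index formula is then just the cardinality statement, with $[E:\mathfrak{g}]^f$ understood as the number of isomorphism classes of $\mathfrak{g}$-complements.

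The only step needing care is the bookkeeping that the defining identities of $\sim$ match the homomorphism conditions for $\mathfrak{h}_r \to \mathfrak{h}_{r'}$ term by term, including signs in the $D_{\rho,\mu}$ and $\mu$ terms. A second point to check is that $\theta_r$ is multiplicative, which I would verify directly from (\ref{bicrossed-1}), (\ref{bicrossed-2}) and the deformation identities (\ref{defor-1}), (\ref{defor-2}). Neither is a genuine obstacle. I would also note the implicit convention that isomorphism of complements means abstract Lie-Yamaguti isomorphism, not required to fix $\mathfrak{g}$ or $E$, since that is what Definition \ref{defn-equiv} encodes.
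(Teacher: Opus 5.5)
Your proposal is correct and follows essentially the same route as the paper. The paper also argues that $r \sim r'$ exactly when $Graph(r) \cong Graph(r')$ via $(r(\alpha),\alpha) \mapsto (r'(\sigma(\alpha)), \sigma(\alpha))$, which is your $\theta_{r'} \circ \sigma \circ \theta_r^{-1}$; it deduces (i) from this and obtains (ii) from Theorem \ref{thm-ccp}. Your version simply spells out what the paper leaves implicit: that $\sim$ is isomorphism of the induced algebras $\mathfrak{h}_r \cong \mathfrak{h}_{r'}$, and the well-definedness, injectivity and surjectivity checks.
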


\begin{proof}
    Two deformation maps $r$ and $r'$ are equivalent in the sense of Definition \ref{defn-equiv} if and only if the corresponding induced Lie-Yamaguti algebras $Graph (r)$ and $Graph (r')$ are isomorphic by the map $(r (\alpha), \alpha) \leftrightarrow ( r' (\sigma (\alpha)) , \sigma (\alpha))$. Hence, it follows that $\sim$ is an equivalence relation on the set $\mathcal{DM} (\mathfrak{g}, \mathfrak{h}).$

    By Theorem \ref{thm-ccp} (ii), any $\mathfrak{g}$-complement in $E$ is given by $Graph (r)$, for some deformation map $r$. Thus, by the previous paragraph, two $\mathfrak{g}$-complements are isomorphic if and only if the corresponding deformation maps are equivalent. Hence, the last part also follows. 
\end{proof}

\medskip

\subsection{Factorizations, deformation maps and the classifying complements problem for Lie triple systems}
As discussed earlier, any Lie triple system can be viewed as a Lie-Yamaguti algebra whose bracket $[~,~]$ is trivial. Hence, by making certain operations trivial in the previous discussions, we obtain the corresponding results for Lie triple systems.

Let $(\mathfrak{g}, \llbracket ~, ~, ~ \rrbracket)$ be a Lie triple system. First, recall that \cite{jacobson,lister,yamaguti0} (also follows from the representation of a Lie-Yamaguti algebra by taking $[~,~]=0$ and $\rho = 0$) a representation of  $(\mathfrak{g}, \llbracket ~, ~, ~ \rrbracket)$ is a pair $(V; \mu)$ consisting of a vector space $V$ endowed with a linear map $\mu: \mathfrak{g} \otimes \mathfrak{g} \rightarrow \mathrm{End} (V)$ that satisfy the following conditions:
\begin{equation*}
    \mu (z, w) \mu (x, y) - \mu (y, w) \mu (x, z) - \mu (x, \llbracket y, z, w \rrbracket) + D_\mu (y, z) \mu (x, w) = 0,
    \end{equation*}
    \begin{equation*}
       \mu ( \llbracket x, y, z \rrbracket, w) + \mu (z, \llbracket x, y, w \rrbracket) = D_{\mu} (x, y) \mu(z, w) - \mu (z, w) D_\mu (x, y),
\end{equation*}
for all $x, y, z , w \in \mathfrak{g}$. Here we have used the notation $D_\mu (x, y) := \mu (y, x) - \mu (x, y)$, for any $x, y \in \mathfrak{g}$.

\begin{defn}
    A {\bf matched pair of Lie triple systems} is a tuple $\big( (\mathfrak{g}, \llbracket ~, ~, ~ \rrbracket), (\mathfrak{h}, \llbracket ~, ~, ~ \rrbracket'), \mu, \nu \big)$ consisting of two Lie triple systems $(\mathfrak{g}, \llbracket ~, ~, ~ \rrbracket)$ and $(\mathfrak{h}, \llbracket ~, ~, ~ \rrbracket')$ with two linear maps $\mu : \mathfrak{g} \otimes \mathfrak{g} \rightarrow \mathrm{End} (\mathfrak{h})$ and  $\nu : \mathfrak{h} \otimes \mathfrak{h} \rightarrow \mathrm{End} (\mathfrak{g})$ such that (i) $(\mathfrak{h}; \mu)$ is a representation of the Lie triple system $(\mathfrak{g}, \llbracket ~, ~, ~ \rrbracket)$, (ii) $(\mathfrak{g}; \nu)$ is a representation of the Lie triple system $(\mathfrak{h}, \llbracket ~, ~, ~ \rrbracket')$, and for all $x, y, z \in \mathfrak{g}$, $\alpha, \beta, \gamma \in \mathfrak{h}$, the following conditions hold:
     \begin{equation*}
        \mu (x, y) \llbracket \alpha, \beta, \gamma \rrbracket' = \llbracket \alpha, \beta, \mu (x, y) \gamma \rrbracket' - \mu ( D_{\nu} (\alpha, \beta) x, y) \gamma - \mu (x, D_{ \nu} (\alpha, \beta)y) \gamma,
    \end{equation*}
      \begin{equation*}
        \mu (\nu (\alpha, \beta) x, y) \gamma - \mu (\nu (\alpha, \gamma) x, y) \beta = \mu (x, D_{ \nu} (\beta, \gamma) y) \alpha - \llbracket \beta, \gamma, \mu (x, y) \alpha \rrbracket',
    \end{equation*}
    \begin{equation*}
        \mu (x, \nu (\alpha, \beta) y) \gamma = \llbracket \mu (x, y) \gamma, \alpha, \beta \rrbracket' - D_{\mu } (y, \nu (\gamma, \alpha) x ) \beta + \mu ( y, \nu (\gamma, \beta) x) \alpha,
    \end{equation*}
    \begin{equation*}
        \nu (\alpha, \beta) \llbracket x, y, z \rrbracket = \llbracket x, y, \nu (\alpha, \beta) z \rrbracket - \nu ( D_{ \mu} (x, y) \alpha , \beta) z - \nu (\alpha , D_{\mu} (x, y) \beta) z,
    \end{equation*}
    \begin{equation*}
        \nu (\mu (x, y) \alpha, \beta) z - \nu ( \mu (x, z) \alpha , \beta) y = \nu (\alpha, D_{ \mu} (y, z) \beta) x - \llbracket y, z, \nu (\alpha, \beta) x \rrbracket,
    \end{equation*}
    \begin{equation*}
        \nu (\alpha, \mu (x, y) \beta) z = \llbracket \nu (\alpha , \beta) z, x, y \rrbracket - D_{ \nu} (\beta, \mu (z, x ) \alpha ) y + \nu (\beta, \mu (z, y) \alpha) x.
    \end{equation*}
\end{defn}

\medskip

Let $\big( (\mathfrak{g}, \llbracket ~, ~, ~ \rrbracket), (\mathfrak{h}, \llbracket ~, ~, ~ \rrbracket'), \mu, \nu \big)$  be any matched pair of Lie triple systems. Then $(\mathfrak{g} \oplus \mathfrak{h}, \llbracket ~, ~, ~ \rrbracket_\Join)$ is a Lie triple system, where
\begin{align*}
     \llbracket (x, \alpha), (y, \beta), (z, \gamma) \rrbracket_\Join :=~& \big( \llbracket x, y, z \rrbracket+ D_{ \nu} (\alpha, \beta) z + \nu (\beta, \gamma) x - \nu (\alpha, \gamma) y  ~ \!, \nonumber \\
         & \qquad \llbracket \alpha, \beta, \gamma \rrbracket' + D_{ \mu} (x, y) \gamma + \mu (y, z) \alpha - \mu (x, z) \beta \big),
\end{align*}
for $(x, \alpha), (y, \beta), (z, \gamma) \in \mathfrak{g} \oplus \mathfrak{h}$. This is called the {\em bicrossed product} of the given matched pair of Lie triple systems. It is simply denoted by $\mathfrak{g} \Join \mathfrak{h}$.

\medskip

Let $(\mathfrak{g}, \llbracket ~, ~, ~ \rrbracket)$ and $(\mathfrak{h}, \llbracket ~, ~, ~ \rrbracket')$ be two Lie triple systems. Then a Lie triple system $(E, \llbracket ~, ~, ~ \rrbracket_E)$ is said to be strongly factorize through $\mathfrak{g}$ and $\mathfrak{h}$ if
\begin{itemize}
    \item[(i)] $\mathfrak{g}$ and $\mathfrak{h}$ are both subalgebras of the Lie triple system $(E, \llbracket ~, ~, ~ \rrbracket_E)$,
    \item[(ii)] $E = \mathfrak{g} + \mathfrak{h}$ and $\mathfrak{g} \cap \mathfrak{h} = \{ 0\},$
    \item[(iii)] $\llbracket \mathfrak{g}, \mathfrak{h}, \mathfrak{h} \rrbracket_E \subset \mathfrak{g}$ and $\llbracket \mathfrak{h}, \mathfrak{g}, \mathfrak{g} \rrbracket_E \subset \mathfrak{h}$.
\end{itemize}
In view of Theorem \ref{thm-factorization}, we have the following result.

\begin{thm}
    Let $(\mathfrak{g}, \llbracket ~, ~, ~ \rrbracket)$ and $(\mathfrak{h}, \llbracket ~, ~, ~ \rrbracket')$ be two Lie triple systems. Then a Lie triple system $(E, \llbracket ~, ~, ~ \rrbracket_E)$ strongly factorizes through $\mathfrak{g}$ and $\mathfrak{h}$ if and only if there exists a matched pair of Lie triple systems $( (\mathfrak{g}, \llbracket ~, ~, ~ \rrbracket), (\mathfrak{h}, \llbracket ~, ~, ~ \rrbracket'), \mu, \nu )$ such that $E \cong \mathfrak{g} \Join \mathfrak{h}$ as Lie triple systems.
\end{thm}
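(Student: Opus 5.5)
The plan is to specialize Theorem \ref{thm-factorization} to the case where the binary brackets vanish, instead of repeating the computation. A Lie triple system $(\mathfrak{g}, \llbracket ~, ~, ~ \rrbracket)$ is the Lie-Yamaguti algebra $(\mathfrak{g}, [~,~]=0, \llbracket ~, ~, ~ \rrbracket)$, and $\mathfrak{h}$ and $E$ are treated the same way. A subalgebra of the Lie triple system $E$ is then a Lie-Yamaguti subalgebra, since the zero bracket is automatically preserved. So $E$ strongly factorizes through $\mathfrak{g}$ and $\mathfrak{h}$ as Lie triple systems if and only if it does so as Lie-Yamaguti algebras.

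For the direction ``$\Leftarrow$'', take a matched pair $(\mathfrak{g}, \mathfrak{h}, \mu, \nu)$ of Lie triple systems. Regard it as the tuple $(\mathfrak{g}, \mathfrak{h}, (\rho=0, \mu), (\psi=0, \nu))$ of Lie-Yamaguti algebras with zero brackets.

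1. With $[~,~]=0$ and $\rho=0$, the map $D_{\rho,\mu}(x,y)$ reduces to $\mu(y,x)-\mu(x,y)=D_\mu(x,y)$, and likewise $D_{\psi,\nu}=D_\nu$.
2. In the representation axioms, the three conditions involving $[~,~]$ or $\rho$ become $0=0$; this uses $\rho_{\llbracket x,y,z\rrbracket}=0$ and $D\rho-\rho D=0$. The remaining two are exactly the Lie triple system representation axioms.
3. Among (\ref{mp1})--(\ref{mp18}), every condition containing $\rho$, $\psi$ or a binary bracket has all its terms vanish. These are (\ref{mp1}) through (\ref{mu-psi}), (\ref{mp10}) through (\ref{psi-mu}), and the $[~,~]$-conditions on $\mu$ and $\nu$. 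What survives is precisely the six listed conditions.

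Hence the tuple is a Lie-Yamaguti matched pair. Its bicrossed product from Theorem \ref{thm-bicrossed} has $[~,~]_\Join=0$, and its ternary bracket is the displayed $\llbracket ~,~,~\rrbracket_\Join$. So it is the Lie triple system bicrossed product, which strongly factorizes through $\mathfrak{g}$ and $\mathfrak{h}$; this is also what makes $\mathfrak{g}\Join\mathfrak{h}$ a Lie triple system in the first place.

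For the direction ``$\Rightarrow$'', apply Theorem \ref{thm-factorization} to $E$ viewed as a Lie-Yamaguti algebra. This gives a matched pair $(\mathfrak{g}, \mathfrak{h}, (\rho,\mu),(\psi,\nu))$ with $E\cong \mathfrak{g}\Join\mathfrak{h}$. The maps come from (\ref{equa1})--(\ref{equa3}), and (\ref{equa1}) reads $[(x,0),(0,\alpha)]_E=(-\psi_\alpha x,\rho_x\alpha)$. Since $[~,~]_E=0$, we get $\rho=0$ and $\psi=0$. By the reduction in steps 1--3, $(\mathfrak{g},\mathfrak{h},\mu,\nu)$ is then a matched pair of Lie triple systems. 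The isomorphism $E\cong\mathfrak{g}\Join\mathfrak{h}$ is the one from the Lie-Yamaguti case; it respects the ternary bracket, and the binary brackets on both sides are zero, so it is an isomorphism of Lie triple systems.

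The main obstacle is step 3: checking carefully that, once $\rho=\psi=0$ and the brackets vanish, the long list (\ref{mp1})--(\ref{mp18}) collapses exactly onto the six conditions in the definition, with no extra surviving constraint. I would go through the list term by term. The one thing to watch is that each occurrence of $D_{\rho,\mu}$ or $D_{\psi,\nu}$ is correctly replaced by $D_\mu$ or $D_\nu$.
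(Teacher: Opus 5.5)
Your proposal is correct and is the paper's own argument. The paper obtains this theorem in one line by specializing Theorem~\ref{thm-factorization} to Lie-Yamaguti algebras with zero binary bracket, and your steps 1--3 make that specialization explicit: $\rho=\psi=0$, $D_{\rho,\mu}=D_\mu$, $D_{\psi,\nu}=D_\nu$, and (\ref{mp1})--(\ref{mp18}) collapse onto the six listed conditions. One cosmetic point: the two labels you cite in step 3 are not defined in the paper. The conditions that vanish identically are the first five, the seventh, the tenth through fourteenth and the sixteenth, and the remaining six are exactly the Lie triple system ones.
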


\begin{defn}
    Let $\big( (\mathfrak{g}, \llbracket ~, ~, ~ \rrbracket), (\mathfrak{h}, \llbracket ~, ~, ~ \rrbracket'), \mu, \nu \big)$ be a matched pair of Lie triple systems. Then a linear map $r : \mathfrak{h} \rightarrow \mathfrak{g}$ is said to be a {\bf deformation map} if for all $\alpha, \beta, \gamma \in \mathfrak{h}$,
    \begin{align*}
        \llbracket r (\alpha), r (\beta), r (\gamma) \rrbracket + D_{\nu} (\alpha, \beta) r(\gamma) & + \nu (\beta, \gamma) r (\alpha) - \nu (\alpha , \gamma) r (\beta) \nonumber \\
     = r \big(  \llbracket \alpha , \beta, \gamma \rrbracket' +& D_{\mu} (r (\alpha), r (\beta)) \gamma + \mu ( r (\beta), r (\gamma)) \alpha - \mu (r (\alpha), r (\gamma)) \beta \big).
    \end{align*}
\end{defn}

It follows that a linear map $r : \mathfrak{h} \rightarrow \mathfrak{g}$ is a deformation map if and only if $Graph (r) = \{    (r(\alpha), \alpha) ~ \! | ~ \! \alpha \in \mathfrak{h} \}$ is a subalgebra of the bicrossed product Lie triple system $\mathfrak{g} \Join \mathfrak{h} = (\mathfrak{g} \oplus \mathfrak{h}, \llbracket ~, ~ , ~ \rrbracket_\Join)$.

\begin{thm}
    Let $\big( (\mathfrak{g}, \llbracket ~, ~, ~ \rrbracket), (\mathfrak{h}, \llbracket ~, ~, ~ \rrbracket'), \mu, \nu \big)$ be a matched pair of Lie triple systems and $r : \mathfrak{h} \rightarrow \mathfrak{g}$ be a deformation map.
   
    (i) Then the vector space $\mathfrak{h}$ inherits a new Lie triple system structure with the operation
        \begin{align*}
             \llbracket \alpha, \beta, \gamma \rrbracket'_r := \llbracket \alpha, \beta, \gamma \rrbracket' + D_{ \mu} ( r(\alpha), r (\beta)) \gamma + \mu ( r(\beta), r (\gamma)) \alpha - \mu ( r (\alpha), r (\gamma)) \beta, \text{ for } \alpha, \beta, \gamma \in \mathfrak{h}.
        \end{align*}
        
    (ii) We also define a linear map $\nu_r : \mathfrak{h} \otimes \mathfrak{h} \rightarrow \mathrm{End} (\mathfrak{g})$ by
        \begin{align*}
             (\nu_r )(\alpha, \beta) x :=~& \nu (\alpha, \beta) x + \llbracket x, r(\alpha), r (\beta) \rrbracket - r \big(  D_{ \mu} (x, r(\alpha)) \beta - \mu (x, r (\beta))\alpha  \big),
        \end{align*}
        for $\alpha, \beta \in \mathfrak{h}$ and $x \in \mathfrak{g}$.
        Then $(\mathfrak{g}; \nu_r)$ is a representation of the induced Lie triple system $(\mathfrak{h}, \llbracket ~, ~, ~ \rrbracket'_r)$.
\end{thm}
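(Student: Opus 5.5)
The plan is to deduce the statement from Theorem \ref{thm-induced} by viewing everything through the dictionary between Lie triple systems and Lie-Yamaguti algebras with trivial bracket. I will regard $(\mathfrak{g}, \llbracket ~, ~, ~ \rrbracket)$ and $(\mathfrak{h}, \llbracket ~, ~, ~ \rrbracket')$ as Lie-Yamaguti algebras with $[~,~] = 0$ and $[~,~]' = 0$. I will then check that $\big(\mathfrak{g}, \mathfrak{h}, (\rho = 0, \mu), (\psi = 0, \nu)\big)$ is a matched pair of Lie-Yamaguti algebras in the sense of Definition \ref{mply}.

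The first task is this verification. With $\rho = 0$ and $[~,~] = 0$ we get $D_{\rho, \mu}(x,y) = \mu(y,x) - \mu(x,y) = D_\mu(x,y)$, and likewise $D_{\psi,\nu} = D_\nu$.
\begin{itemize}
\item The representation axioms for $(\mathfrak{h}; 0, \mu)$ reduce as follows. The first two hold trivially since both sides vanish. The third becomes $0 = 0$. The last two are exactly the Lie triple system representation axioms. The same holds for $(\mathfrak{g}; 0, \nu)$.
\item Among the compatibility conditions (\ref{mp1})--(\ref{mp18}), every one involving $\rho$, $\psi$ or a bracket has all terms vanishing. Examples are (\ref{mp1}), (\ref{mp10}), the conditions $\rho_{\nu(\alpha,\beta)x}\gamma = \rho_{\nu(\alpha,\gamma)x}\beta$, $\mu(x,\psi_\alpha y)\beta = [\alpha,\mu(x,y)\beta]'$, and so on. One point needs care: the condition $\rho_x(\llbracket \alpha,\beta,\gamma\rrbracket') = \llbracket \alpha,\beta,\rho_x\gamma\rrbracket' - \rho_{D(\alpha,\beta)x}\gamma$ becomes $0=0$, and this is fine.
\item The six remaining conditions are precisely the six axioms in the definition of a matched pair of Lie triple systems.
\end{itemize}

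Next I compare the operations. A linear map $r$ is a deformation map of Lie triple systems exactly when it satisfies (\ref{defor-2}) with $\rho = \psi = 0$. Condition (\ref{defor-1}) becomes $0 = r(0)$ and is automatic. So $r$ is a deformation map in the Lie-Yamaguti matched pair above. Theorem \ref{thm-induced}(i) then gives a Lie-Yamaguti structure on $\mathfrak{h}$ with:
\begin{itemize}
\item bracket $[\alpha,\beta]'_r = 0 + \rho_{r(\alpha)}\beta - \rho_{r(\beta)}\alpha = 0$;
\item ternary operation exactly $\llbracket~,~,~\rrbracket'_r$ as defined in the statement.
\end{itemize}
A Lie-Yamaguti algebra with zero bracket is a Lie triple system: the first axiom reduces to the cyclic identity and (\ref{ly4}) is kept. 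This proves (i).

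For (ii), Theorem \ref{thm-induced}(ii) says $(\mathfrak{g}; \psi_r, \nu_r)$ is a representation of $\mathfrak{h}_r$. Here $(\psi_r)_\alpha x = 0 + [r(\alpha),x] + r(\rho_x\alpha) = 0$, and $\nu_r$ coincides with the formula in the statement once $D_{\rho,\mu}$ is replaced by $D_\mu$. A Lie-Yamaguti representation with $\rho = 0$ of an algebra with zero bracket is exactly a Lie triple system representation. Indeed, $D_{0,\nu_r} = D_{\nu_r}$, and the two surviving axioms are the required ones.

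The only step with real content is the term-by-term check that the Lie-Yamaguti matched pair conditions collapse to the Lie triple system ones. This is bookkeeping, but it must be done carefully so that no surviving $D$-term secretly involves $\rho$ or $\psi$.
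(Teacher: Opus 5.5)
Your proposal is correct and follows the same route as the paper, which obtains this theorem by regarding Lie triple systems as Lie-Yamaguti algebras with trivial bracket and specializing Theorem \ref{thm-induced} to the matched pair $\big(\mathfrak{g}, \mathfrak{h}, (0, \mu), (0, \nu)\big)$. Your explicit checks are exactly the bookkeeping the paper leaves implicit: twelve of the eighteen compatibility conditions and the first three representation axioms become $0=0$, $D_{\rho,\mu} = D_\mu$ and $D_{\psi,\nu} = D_\nu$, (\ref{defor-1}) is automatic, and $[~,~]'_r = 0$ and $\psi_r = 0$.
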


Let $(E, \llbracket ~, ~, ~ \rrbracket_E)$ be a Lie triple system and $\mathfrak{g} \subset E$ be a subalgebra of it. Then a $\mathfrak{g}$-complement (resp. strong $\mathfrak{g}$-complement) in $E$ is a subalgebra $\mathfrak{h} \subset E$ such that
\begin{align*}
    E= \mathfrak{g} + \mathfrak{h}, \quad \mathfrak{g} \cap \mathfrak{h} = \{ 0 \} \quad (\text{additionally, } \llbracket \mathfrak{g}, \mathfrak{h}, \mathfrak{h} \rrbracket_E \subset \mathfrak{g} ~~~ \text{ and } ~~~ \llbracket \mathfrak{h}, \mathfrak{g}, \mathfrak{g} \rrbracket_E \subset \mathfrak{h}).
\end{align*}
In the context of Lie triple systems, Proposition \ref{prop-ly1} and Theorem \ref{thm-ccp} can be summarized in the following result.

\begin{thm}
    Let $\mathfrak{g}$ be a subalgebra of the Lie triple system $E$, and let $\mathfrak{h}$ be any strong $\mathfrak{g}$-complement in $E$.

    (i) Then there exists a matched pair of Lie triple systems $(\mathfrak{g}, \mathfrak{h}, \mu, \nu)$, called the canonical matched pair, such that $E \cong \mathfrak{g} \Join \mathfrak{h}$ as Lie triple systems.

    (ii) For any deformation map $r : \mathfrak{h} \rightarrow \mathfrak{g}$ in the canonical matched pair, the graph $Graph (r)$ is a $\mathfrak{g}$-complement in $E$.

    (iii) Finally, if $\overline{\mathfrak{h}}$ is any $\mathfrak{g}$-complement in $E$, there exists a deformation map $r : \mathfrak{h} \rightarrow \mathfrak{g}$ in the canonical matched pair such that $\overline{ \mathfrak{h}} = Graph (r)$ as Lie triple systems.
\end{thm}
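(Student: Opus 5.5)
The plan is to deduce the statement from the Lie-Yamaguti results, using the embedding of Lie triple systems as Lie-Yamaguti algebras with zero bracket. A Lie triple system $(E, \llbracket ~, ~, ~ \rrbracket_E)$ is viewed as the Lie-Yamaguti algebra $(E, [~,~]_E = 0, \llbracket ~, ~, ~ \rrbracket_E)$. Under this identification, subalgebras, complements and strong complements of Lie triple systems coincide with the corresponding Lie-Yamaguti notions. This holds because the bracket conditions are vacuous.

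For part (i), I apply Proposition \ref{prop-ly1}, or equivalently the converse direction in the proof of Theorem \ref{thm-factorization}. This yields maps $\rho, \mu, \psi, \nu$ defined by (\ref{equa1})--(\ref{equa3}). Since $[~,~]_E = 0$, (\ref{equa1}) forces $\rho = 0$ and $\psi = 0$.

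I then substitute $[~,~] = [~,~]' = 0$ and $\rho = \psi = 0$ into Definition \ref{mply}. In this case $D_{\rho,\mu}(x,y) = \mu(y,x) - \mu(x,y) = D_\mu(x,y)$, and similarly $D_{\psi,\nu} = D_\nu$.

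\begin{itemize}
\item The representation axioms reduce to the two Lie triple system representation axioms; the remaining ones become $0 = 0$.
\item Among the compatibility conditions, those involving $\rho$, $\psi$ or brackets vanish identically.
\item What survives is exactly the six conditions in the definition of a matched pair of Lie triple systems.
\end{itemize}

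The bicrossed product (\ref{bicrossed-2}) then reduces to the Lie triple system bicrossed product. Moreover, the identification $E \cong \mathfrak{g} \Join \mathfrak{h}$ from Theorem \ref{thm-factorization} respects the ternary operations, so it is an isomorphism of Lie triple systems.

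For parts (ii) and (iii), I observe that with $\rho = \psi = 0$ and zero brackets, both sides of (\ref{defor-1}) vanish. Condition (\ref{defor-2}) becomes precisely the Lie triple system deformation map equation. Hence deformation maps in the two senses coincide.

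Part (ii) follows from Theorem \ref{thm-ccp}(i). Equivalently, I can argue directly: $Graph(r)$ is a subalgebra by the graph characterization stated just before, and $E = \mathfrak{g} \oplus Graph(r)$ since $(x, \alpha) = (x - r(\alpha), 0) + (r(\alpha), \alpha)$ and the intersection is zero.

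Part (iii) follows from Theorem \ref{thm-ccp}(ii) applied to $\overline{\mathfrak{h}}$ regarded as a Lie-Yamaguti complement. Concretely, I write each $\alpha \in \mathfrak{h}$ as $a(\alpha) + b(\alpha)$ with $a(\alpha) \in \mathfrak{g}$ and $b(\alpha) \in \overline{\mathfrak{h}}$, and set $r = -a$. Closure of $\overline{\mathfrak{h}}$ under the ternary operation then gives the deformation identity by comparing components. Finally $Graph(r) = b(\mathfrak{h}) = \overline{\mathfrak{h}}$.

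The only step needing genuine care is the reduction in part (i). I must verify that every Lie-Yamaguti matched pair axiom either becomes trivial or becomes one of the listed Lie triple system axioms once $\rho = \psi = 0$. In particular, the conditions of the type $\mu(x,\psi_\alpha y)\beta = [\alpha, \mu(x,y)\beta]'$ must collapse to $0 = 0$. I also need the ordering conventions for $D$ to match the stated $D_\mu$. I expect this bookkeeping to be the main obstacle, and it is routine.
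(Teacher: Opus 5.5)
Your proposal is correct and takes the paper's route. The paper gives no separate argument and presents this theorem as the specialization of Proposition \ref{prop-ly1} and Theorem \ref{thm-ccp} to Lie-Yamaguti algebras with zero bracket. That is exactly the reduction you carry out in more detail: $\rho=\psi=0$, $D_{\rho,\mu}=D_\mu$, $D_{\psi,\nu}=D_\nu$, and (\ref{defor-1}) becomes trivial.

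One point in part (i) is worth making explicit, since you attribute it to Theorem \ref{thm-factorization}. The components $\llbracket \mathfrak{g},\mathfrak{g},\mathfrak{h}\rrbracket_E$ and $\llbracket \mathfrak{h},\mathfrak{h},\mathfrak{g}\rrbracket_E$ are not fixed by (\ref{equa1})--(\ref{equa3}). The cyclic identity gives $\llbracket x,y,\gamma\rrbracket_E=(0,D_\mu(x,y)\gamma)$ and $\llbracket \alpha,\beta,z\rrbracket_E=(D_\nu(\alpha,\beta)z,0)$, and this is what makes the identification $E\cong\mathfrak{g}\Join\mathfrak{h}$ hold.
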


One may also define the equivalence relation $~\sim~$ (similar to Definition \ref{defn-equiv}) on the set of all deformation maps in a given matched pair of Lie triple systems. Then there is a bijection between the isomorphism classes of all $\mathfrak{g}$-complements in $E$ and the equivalence classes of all deformation maps in the canonical matched pair.

\section{Cohomology and the governing $L_\infty$-algebra of a deformation map}\label{section-5}
In this section, we first define the cohomology of a deformation map $r$ in a given matched pair of Lie-Yamaguti algebras. Our cohomology unifies the existing cohomologies of crossed homomorphisms and Rota-Baxter operators on Lie-Yamaguti algebras. Given a matched pair, we also construct an $L_\infty$-algebra whose Maurer-Cartan elements are precisely deformation maps. Finally, using this Maurer-Cartan characterization, we obtain the governing $L_\infty$-algebra of a deformation map $r$.

\subsection{Cohomology of deformation maps}\label{subsection-51}

Let $(\mathfrak{g}, \mathfrak{h}, (\rho, \mu), (\psi, \nu))$ be a matched pair of Lie-Yamaguti algebras and $r : \mathfrak{h} \rightarrow \mathfrak{g}$ be a deformation map. In Theorem \ref{thm-induced}, we have seen that $r$ induces a new Lie-Yamaguti algebra structure $(\mathfrak{h} , [~,~]'_r, \llbracket ~, ~, ~ \rrbracket_r')$ on the vector space $\mathfrak{h}$. Moreover, this induced Lie-Yamaguti algebra has a representation $(\mathfrak{g}; \psi_r,\nu_r)$. Therefore, one may consider the cochain complex $\{ C^\bullet (\mathfrak{h}, \mathfrak{g}), \delta_r  \}$ of the Lie-Yamaguti algebra $(\mathfrak{h} , [~,~]'_r, \llbracket ~, ~, ~ \rrbracket_r')$ with coefficients in the above representation. Note that the coboundary map has been denoted by the notation $\delta_r$ to emphasize that it is induced by the deformation map $r$.

We first observe the following result.

\begin{prop}\label{prop-1co}
    For any $\mathfrak{X} = x \wedge y \in \wedge^2 \mathfrak{g}$, we define a linear map $d (\mathfrak{X}) : \mathfrak{h} \rightarrow \mathfrak{g}$ by
\begin{align*}
    d (\mathfrak{X}) \alpha = d (x \wedge y) \alpha := r (D_{\rho, \mu} (x, y) \alpha) - \llbracket x, y, r (\alpha) \rrbracket, \text{ for all } \alpha \in \mathfrak{h}. 
\end{align*}
Then  $d (\mathfrak{X}) \in \mathrm{Hom} (\mathfrak{h}, \mathfrak{g}) = C^1 (\mathfrak{h}, \mathfrak{g})$ is a $1$-cocycle in the cochain complex $\{ C^\bullet (\mathfrak{h}, \mathfrak{g}), \delta_r \}$.
\end{prop}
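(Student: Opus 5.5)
We have to show that $\delta_r(d(\mathfrak{X})) = 0$, i.e. that both components
\begin{align*}
(\delta_r d(\mathfrak{X}))_\mathrm{I}(\alpha,\beta) &= (\psi_r)_\alpha d(\mathfrak{X})\beta - (\psi_r)_\beta d(\mathfrak{X})\alpha - d(\mathfrak{X})[\alpha,\beta]'_r,\\
(\delta_r d(\mathfrak{X}))_\mathrm{II}(\alpha,\beta,\gamma) &= D_{\psi_r,\nu_r}(\alpha,\beta) d(\mathfrak{X})\gamma + \nu_r(\beta,\gamma)d(\mathfrak{X})\alpha - \nu_r(\alpha,\gamma)d(\mathfrak{X})\beta - d(\mathfrak{X})\llbracket\alpha,\beta,\gamma\rrbracket'_r
\end{align*}
vanish. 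A brute-force expansion is possible but long, so I would organise the argument conceptually via the bicrossed product $\mathfrak{g}\Join\mathfrak{h}$.

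\textbf{Step 1: rewrite $d(\mathfrak{X})$ as an inner derivation acting on the graph.} For $\mathfrak{X}=x\wedge y$, the operator $\Theta := \llbracket (x,0),(y,0), - \rrbracket_\Join$ is a derivation of both operations of $\mathfrak{g}\Join\mathfrak{h}$. For the ternary operation this is identity (\ref{ly4}); for the binary operation it is (\ref{eqn-3}). By (\ref{bicrossed-2}),
\[
\Theta(r(\alpha),\alpha) = \big(\llbracket x,y,r(\alpha)\rrbracket,\; D_{\rho,\mu}(x,y)\alpha\big).
\]
Use the linear isomorphism $\mathfrak{g}\oplus\mathfrak{h}\to\mathfrak{g}\oplus\mathfrak{h}$, $(u,\alpha)\mapsto(u-r(\alpha),\alpha)$, which carries $Graph(r)$ onto $\mathfrak{h}$ and fixes $\mathfrak{g}$. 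Transport the bicrossed structure along it. This yields a Lie-Yamaguti structure on $\mathfrak{g}\oplus\mathfrak{h}_r$ in which $\mathfrak{h}_r$ is a subalgebra (by Proposition \ref{prop-graph} and Theorem \ref{thm-induced}). The $\mathfrak{g}$-component of the mixed operations between $\mathfrak{h}_r$ and $\mathfrak{g}$ recovers $\psi_r,\nu_r$. I expect this to be the content of Theorem \ref{thm-induced}(ii), and I would verify it directly on the formulas for $(\psi_r)_\alpha x$ and $\nu_r(\alpha,\beta)x$.

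\textbf{Step 2: extract a cocycle from the transported derivation.} Under this transport, $\Theta$ sends $\alpha\in\mathfrak{h}_r$ to
\[
\big(\llbracket x,y,r(\alpha)\rrbracket - r(D_{\rho,\mu}(x,y)\alpha),\; D_{\rho,\mu}(x,y)\alpha\big) = \big(-d(\mathfrak{X})\alpha,\; D_{\rho,\mu}(x,y)\alpha\big).
\]
Project onto $\mathfrak{g}$ the derivation identity $\Theta[a,b]=[\Theta a,b]+[a,\Theta b]$ for $a,b\in\mathfrak{h}_r$, and likewise the ternary one. The $\mathfrak{g}$-part of the left side is $-d(\mathfrak{X})[\alpha,\beta]'_r$. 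On the right, the terms $[(-d(\mathfrak{X})\alpha,0),(0,\beta)]$ contribute the $\psi_r$-action terms, and the terms involving $(0,D_{\rho,\mu}(x,y)\alpha)$ lie in $\mathfrak{h}_r$, so they have no $\mathfrak{g}$-component. This gives exactly $(\delta_r d(\mathfrak{X}))_\mathrm{I}=0$. The ternary identity gives $(\delta_r d(\mathfrak{X}))_\mathrm{II}=0$ in the same way, using that $\llbracket \mathfrak{h}_r,\mathfrak{h}_r,\mathfrak{g}\rrbracket$ has $\mathfrak{g}$-part $D_{\psi_r,\nu_r}$ and that $\llbracket\mathfrak{g},\mathfrak{h}_r,\mathfrak{h}_r\rrbracket$ has $\mathfrak{g}$-part $\nu_r$.

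\textbf{Main obstacle.} The delicate point is Step 2's claim about the transported structure: the mixed operations must have the stated $\mathfrak{g}$-components, and no stray $\mathfrak{h}$-components may feed back into the $\mathfrak{g}$-projection. A bracket of two $\mathfrak{h}_r$ elements lands in $\mathfrak{h}_r$, so that case is fine. The sign convention in $\nu_r$, and the Remark's formula for $D_{\psi_r,\nu_r}$, must be matched against $\llbracket(u,0),(0,\alpha),(0,\beta)\rrbracket$ in the transported coordinates. If this bookkeeping is unclear, I would fall back to the direct computation: expand the formulas and use the deformation-map identities (\ref{defor-1})--(\ref{defor-2}), (\ref{eqn-3}), (\ref{ly4}), (\ref{eqn-16}) and the $D_{\rho,\mu}$-compatibilities from the Remark to cancel terms.
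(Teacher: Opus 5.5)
Your proposal is correct, and it takes a genuinely different route from the paper.

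\textbf{The paper's route.} The paper expands $(\delta_r d(\mathfrak{X}))_{\mathrm{I}}(\alpha,\beta)$ directly from the formulas for $\psi_r$ and $[~,~]'_r$. It cancels terms using (\ref{eqn-3}), (\ref{eqn-16}) and (\ref{defor-1}), and asserts that the ternary component vanishes ``in the same way''.

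\textbf{Your route.} You recognise $-d(\mathfrak{X})$ as the inner derivation $\llbracket (x,0),(y,0),-\rrbracket_\Join$ of $\mathfrak{g}\Join\mathfrak{h}$, restricted to the subalgebra $Graph(r)\cong\mathfrak{h}_r$. You then compose it with the projection $(u,\alpha)\mapsto u-r(\alpha)$ onto $\mathfrak{g}\cong(\mathfrak{g}\Join\mathfrak{h})/Graph(r)$. The two cocycle identities are then just the $\mathfrak{g}$-projections of the two derivation identities.

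\textbf{The bookkeeping you flagged does go through.} One checks the following.
\begin{itemize}
\item $\Phi[(r(\alpha),\alpha),(u,0)]_\Join=((\psi_r)_\alpha u,\,-\rho_u\alpha)$.
\item $\Phi\llbracket (u,0),(r(\alpha),\alpha),(r(\beta),\beta)\rrbracket_\Join=(\nu_r(\alpha,\beta)u,\,D_{\rho,\mu}(u,r(\alpha))\beta-\mu(u,r(\beta))\alpha)$.
\item The $\mathfrak{g}$-part of $\Phi\llbracket (r(\alpha),\alpha),(r(\beta),\beta),(u,0)\rrbracket_\Join$ is exactly the Remark's expression for $D_{\psi_r,\nu_r}(\alpha,\beta)u$.
\end{itemize}
For the last item you need not trust the Remark. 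Applying the first Lie-Yamaguti identity in the transported algebra, together with the fact that $\mathfrak{h}_r$ is a subalgebra, gives the same $\mathfrak{g}$-part as $D_{\psi_r,\nu_r}(\alpha,\beta)u$.

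The transported algebra is not strongly factorized, since the $\mathfrak{h}$-component in the second item is nonzero in general. This is harmless. After splitting $\Theta'(0,\alpha)=(-d(\mathfrak{X})\alpha,0)+(0,D_{\rho,\mu}(x,y)\alpha)$, every term is a single operation with at most one argument in $\mathfrak{g}$. Terms with all arguments in $\mathfrak{h}_r$ have zero $\mathfrak{g}$-part.

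\textbf{What each approach buys.} Your approach treats both components uniformly and actually proves the ternary component that the paper leaves to the reader. It also explains why $(\mathfrak{g};\psi_r,\nu_r)$ is a representation at all. Namely, it is the quotient of the adjoint representation of $\mathfrak{g}\Join\mathfrak{h}$, restricted to $Graph(r)$, by the subrepresentation $Graph(r)$. So Theorem \ref{thm-induced}(ii) comes out of the same picture. The paper's computation is more elementary and explicit, but it is long and less transparent.
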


\begin{proof}
    To show that $d (\mathfrak{X})$ is a $1$-cocycle, we need to show that $\delta_r ( d (\mathfrak{X})) = \big(  \delta_r ( d (\mathfrak{X}))_\mathrm{I}, \delta_r ( d (\mathfrak{X}))_\mathrm{II} \big) = 0$. For any $\alpha, \beta \in \mathfrak{h}$, we observe that
    \begin{align*}
        & \delta_r ( d(\mathfrak{X} ))_\mathrm{I} (\alpha, \beta)\\
        &= (\psi_r)_\alpha \delta (\mathfrak{X}) \beta -  (\psi_r)_\beta \delta (\mathfrak{X}) \alpha - \delta (\mathfrak{X}) ([\alpha, \beta]'_r) \\
        &= \psi_\alpha \delta (\mathfrak{X}) \beta + [r(\alpha), \delta (\mathfrak{X}) \beta] + r (\rho_{\delta (\mathfrak{X}) \beta} \alpha) - \psi_\beta \delta (\mathfrak{X}) \alpha - [r(\beta), \delta (\mathfrak{X}) \alpha] - r (\rho_{\delta (\mathfrak{X}) \alpha} \beta) \\
        & \quad - r (D_{\rho, \mu} (x, y) [\alpha, \beta]'_r) + \llbracket x, y, r ([\alpha, \beta]'_r) \rrbracket \\
        &= \psi_\alpha r (D_{\rho, \mu} (x, y) \beta) - \psi_\alpha \llbracket x, y, r (\beta) \rrbracket + [r (\alpha), r (D_{\rho, \mu} (x, y) \beta)] - [r (\alpha), \llbracket x, y, r (\beta) \rrbracket ] \\
        & \quad + r \big(  \rho_{ r (D_{\rho, \mu} (x, y) \beta ) } \alpha - \rho_{\llbracket x, y, r (\beta) \rrbracket } \alpha    \big) - \psi_\beta r (D_{\mu, \rho} (x, y) \alpha ) + \psi_\beta \llbracket x, y, r (\alpha) \rrbracket \\
        & \quad - [ r(\beta), r (D_{\rho, \mu} (x, y) \alpha )] + [ r (\beta) , \llbracket x, y, r (\alpha) \rrbracket ] - r \big(  \rho_{ r (D_{\rho, \mu} (x, y) \alpha ) } \beta - \rho_{\llbracket x, y, r (\alpha) \rrbracket } \beta  \big) \\
        & \quad - r \big(  [D_{\rho, \mu} (x, y) \alpha, \beta]' + [\alpha, D_{\rho, \mu} (x, y) \beta]' + D_{\rho, \mu} (x, y) \rho_{r (\alpha)} \beta - D_{\rho, \mu} (x, y) \rho_{r (\beta)} \alpha   \big) \\
        & \quad + \llbracket x, y, [r (\alpha), r (\beta)] \rrbracket + \llbracket x, y, \psi_\alpha r (\beta) \rrbracket - \llbracket x, y, \psi_\beta r(\alpha) \rrbracket \\
        &= 0 \quad (\text{by using } (\ref{eqn-3}),(\ref{eqn-16}), (\ref{defor-1})).
    \end{align*}
    In the same way, one deduce that $\delta_r  ( d(\mathfrak{X} ))_\mathrm{II} (\alpha, \beta, \gamma) = 0$, for all $\alpha, \beta, \gamma = 0$. This proves the result.
\end{proof}

We are now ready to define the cohomology of a deformation map $r$. For each $n \geq 0$, we define the space $C^n_r (\mathfrak{h}, \mathfrak{g})$ of $n$-cochains by
\begin{align*}
    C^n_r (\mathfrak{h}, \mathfrak{g}) := \begin{cases}\wedge^2 \mathfrak{g} & \text{ if } n = 0,\\
        C^n (\mathfrak{h}, \mathfrak{g}) & \text{ if } n \geq 1.
    \end{cases}
\end{align*}
A map $\partial_r : C^n_r (\mathfrak{h}, \mathfrak{g}) \rightarrow C^{n+1}_r (\mathfrak{h}, \mathfrak{g}) $ is defined by 
\begin{align*}
    \partial_r (\mathfrak{X}) =~& d(\mathfrak{X}) ~~ \text{ if } \mathfrak{X} \in \wedge^2 \mathfrak{g} = C^0_r (\mathfrak{h}, \mathfrak{g}), \\
    \partial_r (f) =~& \delta_r (f) ~~ \text{ if } f \in C^{n \geq 1}_r (\mathfrak{h}, \mathfrak{g}).
\end{align*}
It turns out from Proposition \ref{prop-1co} that $\{ C^\bullet_r (\mathfrak{h}, \mathfrak{g}), \partial_r \}$ is a cochain complex. The corresponding cohomology groups are said to be the {\bf cohomology groups of the deformation map} $r$. They are denoted by $H^n_r (\mathfrak{h}, \mathfrak{g})$, for $n \geq 0$.

\subsection{From Lie-Yamaguti algebra to Maurer-Cartan element} Let $\mathfrak{g}$ be any vector space (need not have any additional structure). For any $p \geq 0$, we set
\begin{align*}
    C^{p+1} (\mathfrak{g}, \mathfrak{g}) := \begin{cases}
    \mathrm{Hom} (\mathfrak{g}, \mathfrak{g}) & \text{ if } p = 0,\\
        \mathrm{Hom} \big( \underbrace{(\wedge^2 \mathfrak{g}) \otimes \cdots \otimes (\wedge^2 \mathfrak{g})}_{p \text{~copies}} , \mathfrak{g} \big) \oplus \mathrm{Hom} \big(  \underbrace{(\wedge^2 \mathfrak{g}) \otimes \cdots \otimes (\wedge^2 \mathfrak{g})}_{p \text{~copies}} \otimes~\! \mathfrak{g} , \mathfrak{g} \big) & \text{ if } p \geq 1.
    \end{cases}
\end{align*}
Given any $f , g \in C^1 (\mathfrak{g}, \mathfrak{g}) = \mathrm{Hom} (\mathfrak{g}, \mathfrak{g})$, we define $f \diamond g \in C^1 (\mathfrak{g}, \mathfrak{g})$ to be the usual composition of $f$ and $g$. When  $f \in C^1 (\mathfrak{g}, \mathfrak{g})$ and $P = (P_\mathrm{I}, P_\mathrm{II}) \in C^{p+1} (\mathfrak{g}, \mathfrak{g})$, the elements  $f \diamond P = ( (f \diamond P)_\mathrm{I}, (f \diamond P)_\mathrm{II}) \in C^{p+1} (\mathfrak{g}, \mathfrak{g})$ and $P \diamond f  = (  (P \diamond f)_\mathrm{I}, (P \diamond f)_\mathrm{II}) \in C^{p+1} (\mathfrak{g}, \mathfrak{g})$ are respectively defined by using
\begin{align}
    (f \diamond P)_\mathrm{I} (\mathfrak{X}_1, \ldots, \mathfrak{X}_p) =~& f \big(  P_\mathrm{I} (\mathfrak{X}_1, \ldots, \mathfrak{X}_p) \big), \label{insert1} \\
    (f \diamond P)_\mathrm{II} (\mathfrak{X}_1, \ldots, \mathfrak{X}_p, x) =~& f \big( P_\mathrm{II} ( \mathfrak{X}_1, \ldots, \mathfrak{X}_p, x)    \big),  \label{insert2}  \\
    (P \diamond f)_\mathrm{I} ( \mathfrak{X}_1, \ldots, \mathfrak{X}_p)  =~& \sum_{k=1}^p P_\mathrm{I} \big( \mathfrak{X}_1, \ldots, \mathfrak{X}_{k-1}, f (x_k) \wedge y_k + x_k \wedge f(y_k), \mathfrak{X}_{k+1}, \ldots, \mathfrak{X}_p \big),  \label{insert3} \\
    (P \diamond f)_\mathrm{II} ( \mathfrak{X}_1, \ldots, \mathfrak{X}_p, x) =~&  \sum_{k=1}^p P_\mathrm{II} \big( \mathfrak{X}_1, \ldots, \mathfrak{X}_{k-1}, f (x_k) \wedge y_k + x_k \wedge f(y_k), \mathfrak{X}_{k+1}, \ldots, \mathfrak{X}_p, x  \big) \nonumber \\
    &+ P_\mathrm{II} (\mathfrak{X}_1, \ldots, \mathfrak{X}_p, f(x)),  \label{insert4} 
\end{align}
for $\mathfrak{X}_1, \ldots, \mathfrak{X}_p \in \wedge^2 \mathfrak{g}$ and $x \in \mathfrak{g}$.
Finally, when $P = (P_\mathrm{I}, P_\mathrm{II}) \in C^{p+1} (\mathfrak{g}, \mathfrak{g})$ and $Q = (Q_\mathrm{I}, Q_\mathrm{II}) \in C^{q+1} (\mathfrak{g}, \mathfrak{g})$, we define a new element $P \diamond Q = (( P \diamond Q)_\mathrm{I}, (P \diamond Q)_\mathrm{II}) \in C^{p+q+1} (\mathfrak{g}, \mathfrak{g})$ by the following
\begin{align*}
    &(P \diamond Q)_\mathrm{I} (\mathfrak{X}_1, \ldots, \mathfrak{X}_{p+q}) \\
    &= \sum_{\substack{\sigma \in \mathbb{S}_{(p, q)} \\ \sigma (p+q) = p+q }} (-1)^{pq} (-1)^\sigma P_\mathrm{II} (\mathfrak{X}_{\sigma (1)}, \ldots, \mathfrak{X}_{\sigma (p)} , Q_\mathrm{I} (\mathfrak{X}_{\sigma (p+1)}  , \ldots, \mathfrak{X}_{\sigma (p+q)} ) ) \\
    & ~ + \sum_{k=1}^p (-1)^{(k-1) q} \sum_{\sigma \in \mathbb{S}_{(k-1, q)}} (-1)^\sigma P_\mathrm{I} \big(  \mathfrak{X}_{\sigma (1)}, \ldots, Q_\mathrm{II} (\mathfrak{X}_{\sigma (k)} , \ldots, \mathfrak{X}_{\sigma (k+q-1)}, x_{k+q}) \wedge y_{k+q}, \mathfrak{X}_{k+q+1}, \ldots, \mathfrak{X}_{p+q}  \big) \\
    & ~ + \sum_{k=1}^p (-1)^{(k-1) q} \sum_{\sigma \in \mathbb{S}_{(k-1, q)}} (-1)^\sigma P_\mathrm{I} \big(  \mathfrak{X}_{\sigma (1)}, \ldots, x_{q+k} \wedge Q_\mathrm{II} (\mathfrak{X}_{\sigma (k)} , \ldots, \mathfrak{X}_{\sigma (k+q-1)}, y_{k+q}) , \mathfrak{X}_{k+q+1}, \ldots, \mathfrak{X}_{p+q}  \big),
\end{align*}
\begin{align*}
    &(P \diamond Q)_\mathrm{II} (\mathfrak{X}_1, \ldots, \mathfrak{X}_{p+q}, x) \\ 
    &= \sum_{\sigma \in \mathbb{S}_{(p, q)}} (-1)^{pq} (-1)^\sigma P_\mathrm{II} (\mathfrak{X}_{\sigma (1)}, \ldots, \mathfrak{X}_{\sigma (p)} , Q_\mathrm{II} (\mathfrak{X}_{\sigma (p+1)}  , \ldots, \mathfrak{X}_{\sigma (p+q)}, x ) ) \\
    & ~ + \sum_{k=1}^p (-1)^{(k-1) q} \sum_{\sigma \in \mathbb{S}_{(k-1, q)}} (-1)^\sigma P_\mathrm{II} \big(  \mathfrak{X}_{\sigma (1)}, \ldots, Q_\mathrm{II} (\mathfrak{X}_{\sigma (k)} , \ldots, \mathfrak{X}_{\sigma (k+q-1)}, x_{k+q}) \wedge y_{k+q}, \mathfrak{X}_{k+q+1}, \ldots, \mathfrak{X}_{p+q}, x  \big) \\
    & ~ + \sum_{k=1}^p (-1)^{(k-1) q} \sum_{\sigma \in \mathbb{S}_{(k-1, q)}} (-1)^\sigma P_\mathrm{II} \big(  \mathfrak{X}_{\sigma (1)}, \ldots, x_{q+k} \wedge Q_\mathrm{II} (\mathfrak{X}_{\sigma (k)} , \ldots, \mathfrak{X}_{\sigma (k+q-1)}, y_{k+q}) , \mathfrak{X}_{k+q+1}, \ldots, \mathfrak{X}_{p+q} , x \big),
\end{align*}
for all $\mathfrak{X}_1, \ldots, \mathfrak{X}_{p+q} \in \wedge^2 \mathfrak{g}$ and $x \in \mathfrak{g}$. As a result, we obtain a bilinear operation 
\begin{equation*}
  \diamond : C^{p+1} (\mathfrak{g}, \mathfrak{g}) \times C^{q+1} (\mathfrak{g}, \mathfrak{g}) \rightarrow C^{p+q+1} (\mathfrak{g}, \mathfrak{g}), \text{ for any } p, q \geq 0.  
\end{equation*}

Then the following result has been proved in \cite{zhao-qiao}.

\begin{thm}
    Let $\mathfrak{g}$ be any vector space. 
    
    (i) Define a degree $0$ graded skew-symmetric bracket $[~,~]_\mathsf{LY} : C^{\bullet +1} (\mathfrak{g}, \mathfrak{g}) \times C^{\bullet +1} (\mathfrak{g}, \mathfrak{g}) \rightarrow C^{\bullet +1} (\mathfrak{g}, \mathfrak{g})$ by
    \begin{align*}
        [P, Q]_\mathsf{LY} := P \diamond Q - (-1)^{p q} ~ \! Q \diamond P,
    \end{align*}
     for $P \in C^{p+1} (\mathfrak{g}, \mathfrak{g})$ and $ Q \in C^{q+1} (\mathfrak{g}, \mathfrak{g})$. Then $ \big(  C^{\bullet +1} (\mathfrak{g}, \mathfrak{g}), [~,~]_\mathsf{LY} \big)$
    is a graded Lie algebra.

    (ii) Let $(\mathfrak{g}, [~,~], \llbracket ~, ~, ~ \rrbracket)$ be a Lie-Yamaguti algebra structure on the vector space $\mathfrak{g}$. We define an element $\pi = (\pi_\mathrm{I}, \pi_\mathrm{II}) \in C^2 (\mathfrak{g}, \mathfrak{g})$ by
    \begin{align*}
        \pi_\mathrm{I} (x, y) = [x, y] ~~~~ \text{ and } ~~~~ \pi_\mathrm{II} (x, y, z) = \llbracket x, y, z \rrbracket, \text{ for } x, y, z \in \mathfrak{g}.
    \end{align*}
    Then $\pi$ is a Maurer-Cartan element of the graded Lie algebra $ \big(  C^{\bullet +1} (\mathfrak{g}, \mathfrak{g}), [~,~]_\mathsf{LY} \big).$
\end{thm}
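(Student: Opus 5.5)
The plan is to follow the standard strategy for Gerstenhaber-type brackets built from a pre-Lie type composition. For part (i) I will show that the product $\diamond$ makes $C^{\bullet+1}(\mathfrak{g},\mathfrak{g})$ a graded right pre-Lie algebra. The key identity is the vanishing of the graded associator symmetry
\[
(P\diamond Q)\diamond R - P\diamond (Q\diamond R) = (-1)^{qr}\big((P\diamond R)\diamond Q - P\diamond (R\diamond Q)\big).
\]
Once this holds, the graded commutator $[P,Q]_\mathsf{LY}=P\diamond Q-(-1)^{pq}Q\diamond P$ is automatically graded skew-symmetric and satisfies the graded Jacobi identity, by the usual formal argument. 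Skew-symmetry is immediate from the definition, so the real content is the pre-Lie identity.

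To verify the pre-Lie identity I will expand both $(P\diamond Q)\diamond R$ and $P\diamond(Q\diamond R)$ as sums over places where $Q$ and $R$ are inserted into $P$. There are three kinds of terms:
\begin{enumerate}
\item[(a)] $R$ inserted into $Q$, which is itself inserted into $P$ (nested insertions);
\item[(b)] $Q$ and $R$ inserted into two different slots of $P$ (disjoint insertions), whether wedge slots $\mathfrak{X}_k$ or the last slot;
\item[(c)] $R$ inserted into an argument of $P$ that is not occupied by $Q$.
\end{enumerate}
The nested terms (a) appear in both expressions with identical signs, so they cancel in the associator. What remains is the sum of the disjoint terms (b), and these are symmetric in $(Q,R)$ up to the sign $(-1)^{qr}$ coming from the shuffle signs. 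This gives the identity.

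The low-degree cases need separate but easy treatment. When one of the cochains lies in $C^1=\mathrm{Hom}(\mathfrak{g},\mathfrak{g})$, I will use the formulas (\ref{insert1})--(\ref{insert4}), under which $f$ acts as a derivation-like insertion.

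The main obstacle is the bookkeeping of signs and shuffles. This is especially delicate in the $\mathrm{I}$-component, where $Q_\mathrm{II}$ may be inserted into either $x_{k+q}$ or $y_{k+q}$ of a wedge slot, while $Q_\mathrm{I}$ may only enter the final slot of $P_\mathrm{II}$. My first step here will be a more conceptual route instead of brute force. Following Yamaguti and Zhao–Qiao, I will identify $C^{\bullet+1}(\mathfrak{g},\mathfrak{g})$ with a subspace of the Leibniz-type cochains on the Lie algebra $\mathfrak{g}\oplus(\wedge^2\mathfrak{g})$ (or with the Balavoine/Nijenhuis–Richardson bracket on the Leibniz cochains $\mathrm{Hom}(\otimes^\bullet \mathfrak{g}_{\mathrm{Leib}},\cdot)$). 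I will then check that $\diamond$ is the restriction of a known pre-Lie product, so that the identity is inherited. If that identification is awkward, I will fall back on the direct term-by-term cancellation described above.

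For part (ii), $\pi\in C^2$ has degree $1$. I will compute $[\pi,\pi]_\mathsf{LY}=2\,\pi\diamond\pi\in C^3$ and evaluate its two components on $(\mathfrak{X}_1,\mathfrak{X}_2)$ and on $(\mathfrak{X}_1,\mathfrak{X}_2,x)$ using the formula with $p=q=1$. The $\mathrm{II}$-component yields, up to sign,
\[
\llbracket x_1,y_1,\llbracket x_2,y_2,x\rrbracket\rrbracket-\llbracket x_2,y_2,\llbracket x_1,y_1,x\rrbracket\rrbracket-\llbracket\llbracket x_1,y_1,x_2\rrbracket,y_2,x\rrbracket-\llbracket x_2,\llbracket x_1,y_1,y_2\rrbracket,x\rrbracket,
\]
which vanishes by (\ref{ly4}). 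The $\mathrm{I}$-component yields $\llbracket x_1,y_1,[x_2,y_2]\rrbracket-[\llbracket x_1,y_1,x_2\rrbracket,y_2]-[x_2,\llbracket x_1,y_1,y_2\rrbracket]$, which vanishes by (\ref{eqn-3}).

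The remaining two axioms (the cyclic ones) are not forced by $[\pi,\pi]=0$ in this truncated complex. Since the statement only requires the Maurer–Cartan equation, I will note that they play no role here: they concern the lowest-degree part of the complex, which is excluded from $C^{\bullet+1}$.
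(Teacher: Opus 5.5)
The paper gives no proof of this statement; it quotes it from Zhao--Qiao, so your argument is an independent, self-contained route. It is sound. The product $\diamond$ is a graded right pre-Lie product with the sign $(-1)^{qr}$; one can confirm this by hand in the first nontrivial case $p=q=r=1$, in both components. Hence $[~,~]_\mathsf{LY}$, as its graded commutator, is a graded Lie bracket, and you actually get slightly more than the cited statement.

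Your computation in (ii) is also correct. $[\pi,\pi]_\mathsf{LY}=2\,\pi\diamond\pi$, and its $\mathrm{I}$- and $\mathrm{II}$-components are, up to sign, the defects of (\ref{eqn-3}) and (\ref{ly4}). You are right that the two cyclic axioms are not encoded. So only the implication ``Lie-Yamaguti structure $\Rightarrow$ Maurer--Cartan element'' holds, which is all that is claimed.

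When you write out (i), drop the ``conceptual'' shortcut. Under the natural map $P_\mathrm{II}\mapsto\tilde P$ into Balavoine's cochains on $\wedge^2\mathfrak{g}$, given by $\tilde P(\mathfrak{X}_1,\ldots,\mathfrak{X}_p,x\wedge y)=P_\mathrm{II}(\mathfrak{X}_1,\ldots,\mathfrak{X}_p,x)\wedge y+x\wedge P_\mathrm{II}(\mathfrak{X}_1,\ldots,\mathfrak{X}_p,y)$, inserting $\tilde Q$ into the last slot of $\tilde P$ produces extra cross terms $Q_\mathrm{II}(\ldots,x)\wedge P_\mathrm{II}(\ldots,y)+P_\mathrm{II}(\ldots,x)\wedge Q_\mathrm{II}(\ldots,y)$. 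Thus $\tilde P\bar\circ\tilde Q\neq\widetilde{P\diamond Q}$, and only the graded commutators agree. Consequently:
\begin{itemize}
\item the pre-Lie identity is not inherited along this map;
\item the map is not injective in general (for $\dim\mathfrak{g}=2$ it only remembers traces);
\item it ignores $P_\mathrm{I}$ entirely.
\end{itemize}
So the direct expansion is the argument to carry out.

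In the direct expansion, do the nested/disjoint bookkeeping at the level of the two factors of a wedge, not just the slots. In $(P\diamond Q)\diamond R$, $Q$ acts as a derivation on $R(\ldots,x_k)\wedge y_k+x_k\wedge R(\ldots,y_k)$. This gives two kinds of terms:
\begin{itemize}
\item the nested terms $Q(\ldots,R(\ldots,x_k))\wedge y_k+x_k\wedge Q(\ldots,R(\ldots,y_k))$, which cancel against $P\diamond(Q\diamond R)$;
\item cross terms $R(\ldots,x_k)\wedge Q(\ldots,y_k)+Q(\ldots,x_k)\wedge R(\ldots,y_k)$ sitting in a single slot of $P$, which survive in the associator and are symmetric in $Q,R$ up to $(-1)^{qr}$.
\end{itemize}
Your list (a)--(c) does not mention these cross terms.

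For the $\mathrm{I}$-component, sort terms into three types: $(P_\mathrm{II},Q_\mathrm{II},R_\mathrm{I})$, $(P_\mathrm{II},Q_\mathrm{I},R_\mathrm{II})$ and $(P_\mathrm{I},Q_\mathrm{II},R_\mathrm{II})$. There is no $P_\mathrm{I}$--$Q_\mathrm{I}$ composition. Then check that the constraint $\sigma(p+q)=p+q$ in the first sum is respected by both bracketings; this constraint says the last wedge always feeds the inserted $\mathrm{I}$-component.

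Finally, the $C^1$ cases need no separate treatment. The formulas (\ref{insert1})--(\ref{insert4}) are just the $p=0$ or $q=0$ specializations of the general formula, with no $\mathrm{I}$-component present.
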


Let $(\mathfrak{g}, [~,~], \llbracket ~, ~, ~ \rrbracket)$ be any Lie-Yamaguti algebra with the corresponding Maurer-Cartan element $\pi = (\pi_\mathrm{I}, \pi_\mathrm{II}) \in C^2 (\mathfrak{g}, \mathfrak{g})$ in the graded Lie algebra $(C^{\bullet +1 } (\mathfrak{g}, \mathfrak{g}) , [~, ~]_\mathsf{LY})$. Then, one may define a cochain complex
\begin{align}\label{delta-pi}
    C^1 (\mathfrak{g}, \mathfrak{g}) \xrightarrow{\delta_\pi} C^2 (\mathfrak{g}, \mathfrak{g}) \xrightarrow{\delta_\pi}  \cdots \cdots \xrightarrow{\delta_\pi} C^n (\mathfrak{g}, \mathfrak{g}) \xrightarrow{\delta_\pi} C^{n+1} (\mathfrak{g}, \mathfrak{g}) \xrightarrow{\delta_\pi} \cdots,
\end{align}
where $\delta_\pi (f) = [\pi, f]_\mathsf{LY}$ and $\delta_\pi (P) = (-1)^{n-1} ~ \! [\pi, P ]_\mathsf{LY}$, for $f \in C^1 (\mathfrak{g}, \mathfrak{g})$ and $P \in C^{n \geq 2} (\mathfrak{g}, \mathfrak{g})$. The corresponding cohomology groups are denoted by $H^\bullet (\mathfrak{g}, \mathfrak{g})$, and are called the cohomology groups of the given Lie-Yamaguti algebra.

 Note that the cohomology of a Lie-Yamaguti algebra with coefficients in a representation was also defined in Section \ref{section-2}. When $(V; \rho, \mu) = (\mathfrak{g}; \rho_\mathrm{ad}, \mu_\mathrm{ad})$ is the adjoint representation, the differential $\delta$ considered there coincides with the map $\delta_\pi$ given in (\ref{delta-pi}). Hence, the corresponding cohomologies are the same.

\subsection{Maurer-Cartan characterization and the governing $L_\infty$-algebra of a deformation map}

\begin{defn}
    An {\bf $L_\infty$-algebra} is a pair $(\mathcal{L}, \{ l_k \}_{k=1}^\infty)$ of a graded vector space $\mathcal{L} = \oplus_{n \in \mathbb{Z}} \mathcal{L}_n$ equipped with a collection $\{ l_k: \mathcal{L}^{\otimes k} \rightarrow \mathcal{L} \}_{k=1}^\infty$ of degree $1$ graded symmetric linear maps satisfying
\begin{align*}
 \displaystyle\sum^{N}_{i=1}~\sum_{\sigma\in \mathbb{S}_{(i,N-i)}} \epsilon(\sigma)~ \! l_{N-i+1}\big(l_i(x_{\sigma(1)},\ldots,x_{\sigma(i)}),x_{\sigma(i+1)},\ldots,x_{\sigma(N)}\big) = 0, 
\end{align*}
for any $N \geq 1$ and homogeneous elements $x_1, \ldots, x_N \in \mathcal{L}$. Here $ \epsilon (\sigma) = \epsilon (\sigma; x_1, \ldots, x_N)$ is the {\em Koszul sign} that appears in the graded context.
\end{defn}

\begin{defn} Let $(\mathcal{L}, \{ l_k \}_{k=1}^\infty)$ be an $L_\infty$-algebra. An element $c \in \mathcal{L}_0$ is said to be a {\bf Maurer-Cartan element} of this $L_\infty$-algebra if $c$ satisfies
\begin{align}\label{mc-eqn}
     \displaystyle\sum_{k=1}^{\infty} \dfrac{1}{k!} ~ \! l_k(c, \ldots, c) = 0.
\end{align}
\end{defn}

Throughout this paper, we assume that all $L_\infty$-algebras under consideration are {\em weakly filtered} \cite{getzler} so that infinite sums like (\ref{mc-eqn}) are always convergent. A useful construction of an $L_\infty$-algebra (that is also weakly filtered in practical cases) is given in \cite{voro}. To describe this construction, first recall that a {\em $V$-data} is a quadruple $(\mathfrak{L}, \mathfrak{a}, p, \Delta)$ consisting of a graded Lie algebra $(\mathfrak{L}, [ ~, ~ ])$, an abelian graded Lie subalgebra $\mathfrak{a} \subset \mathfrak{L}$, a projection map $p : \mathfrak{L} \rightarrow \mathfrak{a} \subset \mathfrak{L}$ whose kernel $\mathrm{ker}(p) \subset \mathfrak{L}$ is a graded Lie subalgebra and an element $\Delta \in \mathrm{ker}(p)_1$ that satisfies $[ \Delta, \Delta ] = 0$.

\begin{thm}\label{thm-voro}
    Let  $(\mathfrak{L}, \mathfrak{a}, p, \Delta)$ be a $V$-data. For any $k \geq 1$, we define a degree $1$ graded linear map $l_k : \mathfrak{a}^{\otimes k} \rightarrow \mathfrak{a}$ by
      \begin{align*} 
  l_k(a_1,\ldots,a_k) := p [ \cdots [ [ \Delta,a_1 ] ,a_2 ] ,\ldots,a_k ], 
  \end{align*}
  for all homogeneous elements $a_1, \ldots, a_k \in \mathfrak{a}$. Then $(\mathfrak{a}, \{ l_k \}_{k=1}^\infty)$ is an $L_\infty$-algebra.
\end{thm}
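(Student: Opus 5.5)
Here the claim is Voronov's higher derived brackets theorem, that $V$-data yields an $L_\infty$-algebra. I would prove it in the standard way, via the big bracket and coderivations. First I fix conventions. Write $P = \mathrm{id} - p$ for the projection onto $\ker(p)$. Since $\mathfrak{a}$ is abelian, the nested brackets $[\cdots[[\Delta,a_1],a_2],\ldots,a_k]$ are graded symmetric in $a_1,\ldots,a_k$ up to the Koszul sign; this follows from repeated use of the graded Jacobi identity together with $[a_i,a_j]=0$. Hence each $l_k$ is a well-defined degree $1$ graded symmetric map, because $\Delta$ has degree $1$ and $p$ preserves degree.

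The key device is the generating series. For an even formal element $a \in \mathfrak{a}$ (after tensoring with an auxiliary Grassmann algebra to handle odd $a$'s, or by polarization), set $\Phi(a) := e^{-\mathrm{ad}_a}\Delta = \sum_k \frac{1}{k!}[\cdots[\Delta,a],\ldots,a]$, up to sign conventions. By polarization, the $L_\infty$ identities for all $N$ are equivalent to a single identity in $a$: namely that
\[
\sum_{i,j} \frac{1}{i!\,j!}\, l_{j+1}\big(l_i(a,\ldots,a),a,\ldots,a\big)=0 .
\]
Writing $F(a) = p\,\Phi(a) = \sum_k \frac{1}{k!} l_k(a,\ldots,a)$, this is the statement that
\[
p\,\big[\,[\ldots[\Delta,a]\ldots],\, F(a)\,\big]\text{-type expression}
\]
vanishes. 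More precisely, the composite equals $p\,[\Phi(a), F(a)]$, which I would show by a derivative computation. The $j$-fold bracket of $\Delta$ with $a$'s and one slot filled by $F(a)$ is obtained from $\Phi(a+tF(a))$ by differentiating at $t=0$. Since everything lives in the abelian $\mathfrak{a}$, this derivative is $[\Phi(a),F(a)]$ up to sign.

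Now I use the hypotheses. Since $[\Delta,\Delta]=0$ and $e^{-\mathrm{ad}_a}$ is a Lie algebra automorphism, we get $[\Phi(a),\Phi(a)] = 0$. Split $\Phi(a) = F(a) + P\Phi(a)$. Then
\[
0 = p[\Phi,\Phi] = 2p[\Phi,F] - p[F,F] - p[P\Phi, P\Phi].
\]
Here $[F,F]=0$ because $\mathfrak{a}$ is abelian, and $p[P\Phi,P\Phi]=0$ because $\ker p$ is a subalgebra. Therefore $p[\Phi(a),F(a)]=0$, which is the desired identity.

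The main obstacle is purely bookkeeping. One must justify the polarization step with correct Koszul signs and the counting factors $\frac{1}{i!j!}$ versus unshuffle sums. One must also check that the derivative identification matches the unshuffle form $\sum_{\sigma\in\mathbb{S}_{(i,N-i)}}\epsilon(\sigma)\, l_{N-i+1}(l_i(\ldots),\ldots)$ exactly. I would handle this by working over $\mathfrak{a}\otimes\Lambda$ with $\Lambda$ a Grassmann algebra, so that all arguments become even. Then $l_k(a,\ldots,a)$ for $a=\sum \epsilon_i a_i$ recovers the symmetric multilinear maps, including the signs. Convergence of the exponentials is formal in $a$, or is guaranteed by the weak filtration assumption.
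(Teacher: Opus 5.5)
The paper does not prove this statement itself and simply cites Voronov \cite{voro}. Your generating-function argument is essentially Voronov's original proof and is correct: $\Phi(a)=e^{-\mathrm{ad}_a}\Delta$ squares to zero because $e^{-\mathrm{ad}_a}$ is an automorphism, and splitting $\Phi(a)$ along $\mathfrak{a}\oplus\ker(p)$, with $\mathfrak{a}$ abelian and $\ker(p)$ a subalgebra, forces $p[\Phi(a),F(a)]=0$. One harmless slip: the term $p[P\Phi,P\Phi]$ enters $p[\Phi,\Phi]$ with a plus sign, not a minus, but it vanishes anyway.
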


\medskip

Let $(\mathfrak{g}, \mathfrak{h}, (\rho, \mu), (\psi, \nu))$ be a matched pair of Lie-Yamaguti algebras with the corresponding bicrossed product $(\mathfrak{g} \oplus \mathfrak{h}, [~,~]_\Join, \llbracket ~, ~, ~ \rrbracket_\Join)$. In the following, we construct an $L_\infty$-algebra whose Maurer-Cartan elements correspond precisely to deformation maps. For this, we first consider the graded Lie algebra $\mathfrak{L} = \big( C^{\bullet +1} (\mathfrak{g} \oplus \mathfrak{h}, \mathfrak{g} \oplus \mathfrak{h}), [~,~]_\mathsf{LY} \big)$ associated to the vector space $\mathfrak{g} \oplus \mathfrak{h}$. It is then easy to see that $\mathfrak{a} = C^{\bullet +1} (\mathfrak{h}, \mathfrak{g})$ is a graded Lie subalgebra of $\mathfrak{L}$. Let $p: \mathfrak{L} \rightarrow \mathfrak{a} \subset \mathfrak{L}$ be the projection onto the subspace $\mathfrak{a}$. Then $\mathrm{ker} (p) \subset \mathfrak{L}$ is a graded Lie subalgebra. Further, since  $(\mathfrak{g} \oplus \mathfrak{h}, [~,~]_\Join, \llbracket ~, ~, ~ \rrbracket_\Join)$ is a Lie-Yamaguti algebra structure on $\mathfrak{g} \oplus \mathfrak{h}$, it defines a Maurer-Cartan element $\Pi = (\Pi_\mathrm{I}, \Pi_\mathrm{II}) \in \mathfrak{L}_1 = C^2 (\mathfrak{g} \oplus \mathfrak{h}, \mathfrak{g} \oplus \mathfrak{h})$ in the graded Lie algebra $\mathfrak{L} = \big(  C^{\bullet +1} (\mathfrak{g} \oplus \mathfrak{h}, \mathfrak{g} \oplus \mathfrak{h}), [~, ~]_\mathsf{LY} \big)$. Explicitly, 
\begin{align*}
    \Pi_\mathrm{I} ((x, \alpha), (y, \beta) ) = [ (x, \alpha), (y, \beta)]_\Join ~~~~ \text{ and } ~~~~ \Pi_\mathrm{II} ((x, \alpha), (y, \beta), (z, \gamma) ) = \llbracket (x, \alpha), (y, \beta), (z, \gamma) \rrbracket_\Join,
\end{align*}
for $(x, \alpha), (y, \beta), (z, \gamma) \in \mathfrak{g} \oplus \mathfrak{h}$. We also note that $\Pi \in \mathrm{ker} (p)$. Hence, we obtain the following result.

\begin{thm}\label{thm-mc-char}
    Let $(\mathfrak{g}, \mathfrak{h}, (\rho, \mu), (\psi, \nu))$ be a matched pair of Lie-Yamaguti algebras.

    (i) With the above notations, $( \mathfrak{L}, \mathfrak{a}, p, \Pi)$ is a $V$-data.

    (ii) Then $(\mathfrak{a} = C^{\bullet +1} (\mathfrak{h}, \mathfrak{g}), \{ l_k \}_{k=1}^\infty)$ is an $L_\infty$-algebra, where for homogeneous elements $P, Q, R \in \mathfrak{a}$,
    \begin{align*}
      l_1 (P) =~& p [ \Pi, P]_\mathsf{LY}, \quad  l_2 (P, Q) = p [ [ \Pi, P]_\mathsf{LY}, Q]_\mathsf{LY}, \quad   l_3 (P, Q, R) = p [ [ [ \Pi, P]_\mathsf{LY}, Q]_\mathsf{LY}, R ]_\mathsf{LY} \\
     & \text{ and } ~ l_k = 0, \text{ for } k \geq 4.
    \end{align*}

    (iii) A linear map $r : \mathfrak{h} \rightarrow \mathfrak{g}$ (i.e., $r \in \mathrm{Hom} (\mathfrak{h}, \mathfrak{g}) = C^1 (\mathfrak{h}, \mathfrak{g}) = \mathfrak{a}_0$) is a deformation map in the given matched pair of Lie-Yamaguti algebras if and only if $r \in \mathfrak{a}_0$ is a Maurer-Cartan element of the $L_\infty$-algebra $(\mathfrak{a}, \{ l_k \}_{k=1}^\infty )$.
\end{thm}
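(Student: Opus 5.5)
Part (i) is a check of the $V$-data axioms. We need four facts: $\mathfrak{a} = C^{\bullet+1}(\mathfrak{h},\mathfrak{g})$ is an abelian graded Lie subalgebra of $\mathfrak{L}$; $\mathrm{ker}(p)$ is a graded Lie subalgebra; $\Pi \in \mathrm{ker}(p)_1$; and $[\Pi,\Pi]_\mathsf{LY}=0$. The last holds because $\Pi$ encodes the Lie-Yamaguti structure $\mathfrak{g}\Join\mathfrak{h}$ of Theorem \ref{thm-bicrossed}, so the cited result of \cite{zhao-qiao} applies. For abelianness, regard an element $P\in\mathfrak{a}$ as a cochain on $\mathfrak{g}\oplus\mathfrak{h}$ that vanishes unless all inputs lie in $\mathfrak{h}$ and that takes values in $\mathfrak{g}$. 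In every term of $P\diamond Q$, the output of $Q$ (which lies in $\mathfrak{g}$) is inserted into an argument slot of $P$, and $P$ kills $\mathfrak{g}$. Hence $P\diamond Q=0$, and so $[P,Q]_\mathsf{LY}=0$.

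For $\mathrm{ker}(p)$, I would use a bidegree grading: assign weight $+1$ to $\mathfrak{h}$-inputs and weight $-1$ to a $\mathfrak{g}$-output. Then $\mathfrak{a}$ is exactly the span of the components of top "$\mathfrak{h}$-in, $\mathfrak{g}$-out" type. The kernel consists of all other components, namely those with some $\mathfrak{g}$-input or with $\mathfrak{h}$-output. I would check that the bracket of two such components cannot land in $\mathfrak{a}$. The argument: if a composite has all inputs in $\mathfrak{h}$ and output in $\mathfrak{g}$, then the inner cochain must have only $\mathfrak{h}$-inputs. If the inner output is in $\mathfrak{g}$, the inner cochain is in $\mathfrak{a}$, which is excluded. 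If the inner output is in $\mathfrak{h}$, then the outer cochain has only $\mathfrak{h}$-inputs and $\mathfrak{g}$-output, so it lies in $\mathfrak{a}$, again excluded. Finally, $\Pi\in\mathrm{ker}(p)$ because formulas (\ref{bicrossed-1}) and (\ref{bicrossed-2}) on purely $\mathfrak{h}$-arguments give $([\alpha,\beta]',\dots)$ and $(D_{\psi,\nu}(\alpha,\beta)0+\dots)$. In other words, the $\mathfrak{g}$-component vanishes when all inputs are in $\mathfrak{h}$, since $\psi$ and $\nu$ act on $\mathfrak{g}$-arguments.

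Part (ii) then follows from Theorem \ref{thm-voro}. What remains is to show $l_k=0$ for $k\ge4$. Every element of $\mathfrak{a}$, acting by bracket, replaces one $\mathfrak{g}$-input by an $\mathfrak{h}$-input (composition with $r$-type maps, i.e. $\Pi\diamond P$). Terms of the form $P\diamond\Pi$ vanish after projection, or after further bracketing, by the abelian argument. $\Pi$ has at most three inputs. After three substitutions no $\mathfrak{g}$-input remains, and a fourth bracket must therefore vanish. The bookkeeping of the order of operations in $[\cdots[\Pi,a_1],\dots]$ is the main fiddly point. Each $[-,a_i]$ lowers the number of $\mathfrak{g}$-inputs by one or produces something in $\mathfrak{a}$, which is then killed by further brackets. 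I would make this precise via the weight grading.

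For part (iii), take $r\in\mathfrak{a}_0$. The Maurer--Cartan equation is $l_1(r)+\tfrac12 l_2(r,r)+\tfrac16 l_3(r,r,r)=0$. Since $\mathfrak{a}$ is abelian and $p$ kills the rest, this equals $p\big(e^{-\mathrm{ad}_r}\Pi\big)$ up to sign. By the standard twisting argument, $e^{[-,r]}\Pi$ is the structure $\Pi$ conjugated by the automorphism $\mathrm{id}+r$ of $\mathfrak{g}\oplus\mathfrak{h}$; the series terminates because $r\diamond r=0$. Projecting onto $\mathfrak{a}$ evaluates on $\mathfrak{h}$-inputs and takes the $\mathfrak{g}$-output. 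I would compute directly that $\tfrac{1}{k!}p[\cdots[\Pi,r],\dots,r]$ collects exactly the terms of (\ref{defor-1}) and (\ref{defor-2}) of $r$-degree $k$:
\begin{align*}
\big(\text{MC}(r)\big)_\mathrm{I}(\alpha,\beta)&=\pm\Big([r\alpha,r\beta]+\psi_\alpha r\beta-\psi_\beta r\alpha-r\big([\alpha,\beta]'+\rho_{r\alpha}\beta-\rho_{r\beta}\alpha\big)\Big),
\end{align*}
and similarly for component II. Equivalently, this is the $\mathfrak{g}$-component obstruction to $Graph(r)$ being closed, so Proposition \ref{prop-graph} finishes the argument. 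The main obstacle is verifying the signs and coefficients $1/k!$ against the $\diamond$ formulas; I would do this by evaluating on $\alpha\wedge\beta$ and on $(\alpha\wedge\beta,\gamma)$ separately.
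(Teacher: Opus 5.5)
Your part (i) is sound, and it is more explicit than the paper, which only says it follows from the preceding discussion.

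Part (iii) is correct but takes a different route. The paper computes $l_1(r)$, $l_2(r,r)$ and $l_3(r,r,r)$ one at a time in components $\mathrm{I}$ and $\mathrm{II}$ and then adds them. You instead use $\sum_{k\geq1}\frac{1}{k!}l_k(r,\ldots,r)=p\big(e^{-\mathrm{ad}_r}\Pi\big)$ with $\mathrm{ad}_r=[r,-]_{\mathsf{LY}}$. This needs only linearity of $p$ and $p(\Pi)=0$; abelianness plays no role, and the sign is exactly $+$, not ``up to sign''. For $f\in C^1$ the bracket $[f,-]_{\mathsf{LY}}$ is the infinitesimal $\mathfrak{gl}$-action, and $r\circ r=0$. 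Hence $e^{-\mathrm{ad}_r}\Pi=(\mathrm{id}-r)\circ\Pi\circ(\mathrm{id}+r)$, with $\mathrm{id}+r$ applied in every argument. Applying $p$ gives $\mathrm{pr}_{\mathfrak{g}}X-r\,\mathrm{pr}_{\mathfrak{h}}X$, where $X$ is $\Pi$ evaluated on graph elements. These are exactly the defects in (\ref{defor-1}) and (\ref{defor-2}). Your route fixes all coefficients $1/k!$ and signs at once and makes the link with Proposition \ref{prop-graph} transparent, so the extra direct computation you plan is unnecessary. The paper's route instead records the individual values, for example $(l_2(r,r))_{\mathrm{II}}=0$ and $(l_3(r,r,r))_{\mathrm{I}}=0$.

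The gap is in (ii), in the argument that $l_k=0$ for $k\geq4$. You claim that terms $P\diamond\Pi$ ``vanish after projection, or after further bracketing'', and that each $[-,a_i]$ either removes a $\mathfrak{g}$-input or lands in $\mathfrak{a}$. Both claims are false.
\begin{itemize}
\item $p(r\diamond\Pi)_{\mathrm{I}}(\alpha,\beta)=r([\alpha,\beta]')$ is precisely the term $-r([\alpha,\beta]')$ of $l_1(r)$.
\item $(r\diamond\Pi)\diamond r$ contributes $-r(\rho_{r(\alpha)}\beta-\rho_{r(\beta)}\alpha)$ to $l_2(r,r)$.
\end{itemize}
Your own formula in (iii) contains these terms. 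A bracket with $a\in\mathfrak{a}$ acts in two ways. Via $X\diamond a$ it replaces a $\mathfrak{g}$-input by $\mathfrak{h}$-inputs. Via $a\diamond X$ it turns an $\mathfrak{h}$-output into a $\mathfrak{g}$-output without touching the $\mathfrak{g}$-inputs, and the result need not lie in $\mathfrak{a}$ (e.g.\ $r(\rho_x\beta)$).

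Your proposed weight ($+1$ per $\mathfrak{h}$-input, $-1$ for a $\mathfrak{g}$-output) does not repair this, because it is not additive under $\diamond$. Inserting $r$ into $[x,y]$ gives a composite whose weight exceeds the sum of the weights by one. Inserting $[\alpha,\beta]'$ into $r$ gives one that falls short by one. Under this weight, $\mathfrak{a}$ is not the top piece either.

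The statistic that works is
\[
N:=(\text{number of }\mathfrak{g}\text{-inputs})-(1\text{ if the output lies in }\mathfrak{g}).
\]
It has three properties:
\begin{itemize}
\item it is additive under every insertion occurring in $\diamond$;
\item it equals $-1$ on $\mathfrak{a}$;
\item it is $\geq-1$ on every nonzero component.
\end{itemize}
So each bracket with an element of $\mathfrak{a}$ lowers $N$ by exactly one. On the components of $\Pi$ one has $N\leq2$, attained by $\llbracket x,y,z\rrbracket$, $D_{\rho,\mu}(x,y)\gamma$ and $\mu(y,z)\alpha$. This bound uses that $\mathfrak{g}$ is closed under $\llbracket~,~,~\rrbracket_\Join$. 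A component $\mathfrak{g}\otimes\mathfrak{g}\otimes\mathfrak{g}\to\mathfrak{h}$ would have $N=3$. It would then allow three input substitutions followed by an output conversion, and could make $l_4\neq0$; your count of inputs alone misses this. Hence $[\cdots[\Pi,a_1]_{\mathsf{LY}},\ldots,a_k]_{\mathsf{LY}}$ has $N\leq2-k$ and vanishes identically for $k\geq4$. The paper only asserts this vanishing ``by direct computation'', so with this correction your argument would be the more complete one.
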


\begin{proof}
    (i) Follows from the preceding discussion.

    (ii) When part (i) applies to Theorem \ref{thm-voro}, we get an $L_\infty$-algebra $(\mathfrak{a} = C^{\bullet +1} (\mathfrak{h}, \mathfrak{g}), \{ l_k \}_{k=1}^\infty)$. For $k \geq 4$, a direct computation (by applying inputs) shows that
    \begin{align*}
        l_k (P_1, \ldots, P_k) = p [ \cdots [ [ \Pi, P_1]_\mathsf{LY}, P_2]_\mathsf{LY}, \ldots, P_k ]_\mathsf{LY} = 0.
    \end{align*}
    Hence, the result follows.

    (iii) Let $r: \mathfrak{h} \rightarrow \mathfrak{g}$ be any linear map (viewed as an element $r \in \mathfrak{a}_0$). Then for any $\alpha, \beta, \gamma \in \mathfrak{h}$, we observe that
    \begin{align*}
        &(l_1 (r))_\mathsf{I} (\alpha, \beta) = (p [ \Pi, r]_\mathsf{LY})_\mathrm{I} (\alpha, \beta)\\
        &= \big( p (\Pi \diamond r - r \diamond \Pi) \big)_\mathrm{I} (\alpha, \beta) \\
        &= \mathrm{pr}_\mathfrak{g} \big(  \Pi_\mathrm{I} (  ( r(\alpha), 0),  (0, \beta)) + \Pi_\mathrm{I} (   (0, \alpha), (r (\beta), 0)  )   \big) - r \circ \mathrm{pr}_\mathfrak{h} \big(  \Pi_\mathrm{I} ((0, \alpha), (0, \beta) )  \big)\\
        &= - \psi_\beta r (\alpha) + \psi_\alpha r(\beta) - r ([\alpha, \beta]')
    \end{align*}
    and similarly,
    \begin{align*}
        &(l_1 (r))_\mathsf{II} (\alpha, \beta, \gamma)
        = (p [ \Pi, r]_\mathsf{LY})_\mathrm{II} (\alpha, \beta, \gamma)\\
        &= \mathrm{pr}_\mathfrak{g} \big(  \Pi_\mathrm{II} (  (r(\alpha), 0), (0, \beta), (0, \gamma))  + \Pi_\mathrm{II} ((0, \alpha), (r(\beta), 0), (0, \gamma)) + \Pi_\mathrm{II} ( (0, \alpha), (0, \beta), (r (\gamma), 0))   \big) \\
        & \qquad \qquad \qquad - r \circ \mathrm{pr}_\mathfrak{h} \big(  \Pi_\mathrm{II} ( (0, \alpha), (0 , \beta), (0, \gamma))  \big) \\
        &=  \nu (\beta, \gamma) r (\alpha) - \nu (\alpha, \gamma) r(\beta) + D_{\psi, \nu} (\alpha, \beta) r (\gamma) - r (\llbracket \alpha, \beta, \gamma \rrbracket').
    \end{align*}
    Also, we have
    \begin{align*}
        &(l_2 (r, r))_\mathrm{I} (\alpha, \beta) = ( p [ [\Pi, r]_\mathsf{LY}, r ]_\mathsf{LY})_\mathrm{I} (\alpha, \beta)\\
        &= \mathrm{pr}_\mathfrak{g} ( [\Pi, r]_\mathsf{LY} \diamond r - r \diamond [\Pi, r]_\mathsf{LY})_\mathrm{I} (\alpha, \beta) \\
        &= \mathrm{pr}_\mathfrak{g} ([\Pi, r]_\mathsf{LY})_\mathrm{I} ( (r(\alpha), 0) , (0, \beta)) + \mathrm{pr}_\mathfrak{g} ([\Pi, r]_\mathsf{LY})_\mathrm{I} ( (0, \alpha), (r (\beta), 0) ) - r \circ \mathrm{pr}_\mathfrak{h} \big(  ([\Pi, r]_\mathsf{LY})_\mathrm{I} ( (0, \alpha), (0, \beta)) \big) \\
        &= \mathrm{pr}_\mathfrak{g} \big(   \Pi_\mathrm{I} ((r(\alpha), 0), (r (\beta), 0) )  \big) - r \circ \mathrm{pr}_\mathfrak{h} \big(  \Pi_\mathrm{I} ( (r(\alpha), 0), (0, \beta))  \big) + \mathrm{pr}_\mathfrak{g} \big(   \Pi_\mathrm{I} ((r(\alpha), 0), (r (\beta), 0) )  \big) \\
        & \qquad - r \circ \mathrm{pr}_\mathfrak{h} \big(  \Pi_\mathrm{I} ((0, \alpha), (r (\beta), 0) )  \big) - r \circ \mathrm{pr}_\mathfrak{h} \big(  \Pi_\mathrm{I}  ( (r(\alpha), 0), (0, \beta)) + \Pi_\mathrm{I}   ((0, \alpha), (r (\beta), 0) )    \big) \\
        &= 2 \big(  [r(\alpha), r(\beta)] - r ( \rho_{r (\alpha)} \beta - \rho_{ r (\beta)} \alpha )  \big),
    \end{align*}
     \begin{align*}
       & (l_2 (r, r))_\mathsf{II} (\alpha, \beta, \gamma)  = ( p [ [\Pi, r]_\mathsf{LY}, r ]_\mathsf{LY})_\mathrm{II} (\alpha, \beta, \gamma)\\
       & =  \mathrm{pr}_\mathfrak{g} ( [\Pi, r]_\mathsf{LY} \diamond r - r \diamond [\Pi, r]_\mathsf{LY})_\mathrm{II} (\alpha, \beta, \gamma)\\
        &= \mathrm{pr}_\mathfrak{g} ([\Pi, r]_\mathsf{LY})_\mathrm{II} ( (r(\alpha), 0), (0, \beta), (0, \gamma) )  + \mathrm{pr}_\mathfrak{g} ([\Pi, r]_\mathsf{LY})_\mathrm{II} ( (0, \alpha), (r(\beta), 0), (0, \gamma)) \\
        & \qquad +   \mathrm{pr}_\mathfrak{g} ([\Pi, r]_\mathsf{LY})_\mathrm{II} ( (0, \alpha), (0, \beta), (r (\gamma), 0) ) - r \circ \mathrm{pr}_\mathfrak{h} \big(  ([\Pi, r]_\mathsf{LY})_\mathrm{II} ( (0, \alpha), (0, \beta), (0, \gamma)) \big) \\
        &= 0 \quad (\mathrm{by ~} (\ref{insert2}), (\ref{insert4}))
    \end{align*}
    and
     \begin{align*}
       & (l_3 (r, r, r))_\mathrm{I} (\alpha, \beta) = ( p [ [ [ \Pi, r]_\mathsf{LY}, r]_\mathsf{LY} , r]_\mathsf{LY})_\mathrm{I} (\alpha, \beta)\\
        &= \mathrm{pr}_\mathfrak{g} \big(   [[ \Pi, r]_\mathsf{LY}, r]_\mathsf{LY} \diamond r - r \diamond   [[ \Pi, r]_\mathsf{LY}, r]_\mathsf{LY}  \big)_\mathrm{I} (\alpha, \beta) \\
        &= \mathrm{pr}_\mathfrak{g} \big\{  ( [[ \Pi, r]_\mathsf{LY}, r]_\mathsf{LY})_\mathrm{I} ( (r(\alpha), 0), (0, \beta) ) +  ([[ \Pi, r]_\mathsf{LY}, r]_\mathsf{LY})_\mathrm{I} ((0, \alpha), (r(\beta), 0) )      \big\} \\
        & \qquad - r \circ \mathrm{pr}_\mathfrak{h} \big(   ( [[ \Pi, r]_\mathsf{LY}, r]_\mathsf{LY})_\mathrm{I} ((0, \alpha), (0, \beta) )   \big) \\
        &= 0 \quad (\mathrm{by ~} (\ref{insert1}), (\ref{insert3})),
    \end{align*}
     \begin{align*}
        &(l_3 (r,r, r))_\mathsf{II} (\alpha, \beta, \gamma)\\
        &= \mathrm{pr}_\mathfrak{g} \big(   [[ \Pi, r]_\mathsf{LY}, r]_\mathsf{LY} \diamond r - r \diamond   [[ \Pi, r]_\mathsf{LY}, r]_\mathsf{LY}  \big)_\mathrm{II} (\alpha, \beta, \gamma) \\
        &= \mathrm{pr}_\mathfrak{g} \big\{  \big( [[ \Pi, r]_\mathsf{LY}, r]_\mathsf{LY} \big)_\mathrm{II} (  (r (\alpha), 0), (0, \beta), (0, \gamma)) +   \big( [[ \Pi, r]_\mathsf{LY}, r]_\mathsf{LY} \big)_\mathrm{II} ((0, \alpha), (r (\beta), 0), (0, \gamma) )  \\
        & \quad +   \big( [[ \Pi, r]_\mathsf{LY}, r]_\mathsf{LY} \big)_\mathrm{II} ((0, \alpha), (0, \beta), (r (\gamma), 0) )  \big\} - r \circ \mathrm{pr}_\mathfrak{h} \big( [[ \Pi, r]_\mathsf{LY}, r]_\mathsf{LY} \big)_\mathrm{II} ((0, \alpha), (0, \beta), (0, \gamma )) \\
        &= 2 ~ \! \mathrm{pr}_\mathfrak{g} \big\{ \big( [\Pi, r]_\mathsf{LY} \big)_\mathrm{II} ( (r(\alpha), 0), (r (\beta), 0), (0, \gamma)) + \big( [\Pi, r]_\mathsf{LY} \big)_\mathrm{II} ( (r(\alpha), 0), (0, \beta), (r( \gamma), 0) )   \\
        & \quad + \big( [\Pi, r]_\mathsf{LY} \big)_\mathrm{II} ( ( 0, \alpha), (0, \beta), (r( \gamma), 0)) \big\} - 2 ~ \! r \circ \mathrm{pr}_\mathfrak{h} \big\{ \big( [\Pi, r]_\mathsf{LY} \big)_\mathrm{II} ( (r(\alpha), 0), (0, \beta), (0, \gamma))  \\
       & \quad +  \big( [\Pi, r]_\mathsf{LY} \big)_\mathrm{II} ( ( 0, \alpha), (r(\beta), 0), (0, \gamma)) + \big( [\Pi, r]_\mathsf{LY} \big)_\mathrm{II} ( ( 0, \alpha), ( 0, \beta), (r (\gamma), 0) ) \big\} \\
        &= 6 ~ \! \big\{  \llbracket r(\alpha), r(\beta), r (\gamma) \rrbracket - r \big( D_{\rho, \mu} (r (\alpha), r(\beta)) \gamma + \mu (r (\beta), r(\gamma)) \alpha - \mu (r(\alpha), r (\gamma)) \beta \big)  \big\}.
    \end{align*}
    As a result, we get that
    \begin{align*}
        \big(  l_1 (r) + \frac{1}{2 !} l_2 (r, r) + \frac{1}{3!} l_3 (r,r,r)  \big)_\mathrm{I} (\alpha, \beta) = 0 \text{ if and only if } r \text{ satisfies the identity } (\ref{defor-1}),\\
        \big(  l_1 (r) + \frac{1}{2 !} l_2 (r, r) + \frac{1}{3!} l_3 (r,r,r)  \big)_\mathrm{II} (\alpha, \beta, \gamma) = 0 \text{ if and only if } r \text{ satisfies the identity } (\ref{defor-2}).
       \end{align*}
   Hence $r$ is a Maurer-Cartan element (i.e., $ l_1 (r) + \frac{1}{2 !} l_2 (r, r) + \frac{1}{3!} l_3 (r,r,r) = 0$) if and only if $r$ is a deformation map.
\end{proof}

The above theorem can be regarded as the Maurer-Cartan characterization of deformation maps in a given matched pair of Lie-Yamaguti algebras. As a consequence, we obtain the following result.

\begin{thm}\label{thm-governing}
    Let $(\mathfrak{g}, \mathfrak{h}, (\rho, \mu), (\psi, \nu))$ be a matched pair of Lie-Yamaguti algebras and $r : \mathfrak{h} \rightarrow \mathfrak{g}$ be a deformation map on it. Then $(\mathfrak{a} = C^{\bullet +1} (\mathfrak{h}, \mathfrak{g}), \{ l_k^r \}_{k=1}^\infty)$ is a new $L_\infty$-algebra, where
    \begin{align*}
        l_1^r (P) =~& l_1 (P) + l_2 (r, P) + \frac{1}{2} l_3 (r,r,P), \\
        l_2^r (P, Q) =~& l_2 (P, Q ) + l_3 (r, P, Q), \\
        l_3^r (P, Q, R) =~& l_3 (P, Q, R)\\
        \text{ and } l_k^r =~& 0, \text{ for } k \geq 4.
    \end{align*}
    Moreover, for any other linear map $r' : \mathfrak{h} \rightarrow \mathfrak{g}$, the sum $r+ r '$ is still a deformation map if and only if $r'$ is a Maurer-Cartan element of this new $L_\infty$-algebra $(\mathfrak{a} = C^{\bullet +1} (\mathfrak{h}, \mathfrak{g}), \{ l_k^r \}_{k=1}^\infty)$.
\end{thm}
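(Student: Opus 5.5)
The plan is to use the standard twisting procedure for $L_\infty$-algebras by a Maurer-Cartan element, together with Theorem \ref{thm-mc-char}. Given an $L_\infty$-algebra $(\mathfrak{a}, \{l_k\})$ and a Maurer-Cartan element $r \in \mathfrak{a}_0$, the twisted brackets are
\begin{align*}
l_k^r (P_1, \ldots, P_k) := \sum_{n \geq 0} \frac{1}{n!} ~ \! l_{k+n} (r, \ldots, r, P_1, \ldots, P_k).
\end{align*}
It is a known result (Getzler; also used by Jiang--Sheng--Tang in the Lie case) that these brackets again form an $L_\infty$-algebra. Weak filtration is not an issue here, since all sums are finite.

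Since $l_k = 0$ for $k \geq 4$ by Theorem \ref{thm-mc-char}(ii), the formula truncates exactly to the stated expressions:
\begin{align*}
l_1^r (P) &= l_1 (P) + l_2 (r, P) + \tfrac{1}{2} l_3 (r, r, P), \\
l_2^r (P, Q) &= l_2 (P, Q) + l_3 (r, P, Q), \\
l_3^r &= l_3, \\
l_k^r &= 0 \quad \text{for } k \geq 4.
\end{align*}

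If one prefers not to cite the twisting theorem, there is a direct route inside the $V$-data. Twisting by $r$ corresponds to replacing $\Pi$ by $\Pi^r := e^{[\cdot, r]} \Pi$, that is,
\begin{align*}
\Pi^r = \Pi + [\Pi, r] + \tfrac{1}{2} [[\Pi, r], r] + \tfrac{1}{6} [[[\Pi, r], r], r].
\end{align*}
This series stops because $\mathfrak{a}$ is abelian and the brackets vanish after three insertions. Because $e^{\mathrm{ad}_r}$ is a graded Lie automorphism of $\mathfrak{L}$, we get $[\Pi^r, \Pi^r] = 0$. The quantity $p(\Pi^r)$ is exactly the Maurer-Cartan expression of $r$, so it vanishes precisely because $r$ is a deformation map. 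Then $\Pi^r \in \ker(p)$, so $(\mathfrak{L}, \mathfrak{a}, p, \Pi^r)$ is again a $V$-data. Theorem \ref{thm-voro} produces brackets $p[\cdots[\Pi^r, P_1], \ldots, P_k]$. Expanding these, and using that $\mathfrak{a}$ is abelian so that the nested brackets with $r$ and with the $P_i$ can be reordered, gives the $l_k^r$ above. The main point to check carefully is the combinatorial coefficients: for example, $p[[[\Pi, r], r], P]$ must match $l_3(r, r, P)$ with the factor $\tfrac12$. This is graded symmetry of the nested brackets, since $r$ has degree $0$.

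For the second assertion, I would expand the Maurer-Cartan expression of $r + r'$ using multilinearity and graded symmetry (all elements involved have degree $0$):
\begin{align*}
\sum_{k=1}^{3} \frac{1}{k!} ~ \! l_k (r + r', \ldots, r + r')
&= \Big( l_1(r) + \tfrac{1}{2} l_2(r, r) + \tfrac{1}{6} l_3(r, r, r) \Big) \\
&\quad + \Big( l_1(r') + l_2(r, r') + \tfrac{1}{2} l_3(r, r, r') \Big) \\
&\quad + \tfrac{1}{2} \Big( l_2(r', r') + l_3(r, r', r') \Big) + \tfrac{1}{6} l_3(r', r', r').
\end{align*}
The first bracket vanishes because $r$ is a deformation map. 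The remaining terms are exactly $l_1^r(r') + \tfrac12 l_2^r(r', r') + \tfrac16 l_3^r(r', r', r')$. Hence, by Theorem \ref{thm-mc-char}(iii), $r + r'$ is a deformation map if and only if $r'$ is a Maurer-Cartan element of $(\mathfrak{a}, \{l_k^r\})$. The only delicate step in the whole argument is verifying the $L_\infty$ relations for $l^r$; I would handle it by citing the twisting result or by the $V$-data argument above.
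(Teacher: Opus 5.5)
Your proposal is correct and follows essentially the same route as the paper: twist the $L_\infty$-algebra of Theorem \ref{thm-mc-char} by the Maurer-Cartan element $r$ (Getzler), truncate using $l_k = 0$ for $k \geq 4$, then expand the Maurer-Cartan expression of $r + r'$ by multilinearity and use that the pure-$r$ part vanishes. Your optional $V$-data route via $\Pi^r$ is also valid and makes the $L_\infty$ relations self-contained, with one correction: $\Pi^r$ stops after the cubic term because of a weight count (each bracket with $r$ turns a $\mathfrak{g}$-slot into an $\mathfrak{h}$-slot or an $\mathfrak{h}$-output into a $\mathfrak{g}$-output, and $\Pi$ has no $\mathfrak{g} \cdots \mathfrak{g} \rightarrow \mathfrak{h}$ component), not because $\mathfrak{a}$ is abelian or because of Theorem \ref{thm-mc-char}(ii), which only controls $p$ of the nested brackets.
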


\begin{proof}
    It follows from Theorem \ref{thm-mc-char} that $r \in \mathfrak{a}_0$ is a Maurer-Cartan element of the $L_\infty$-algebra $(\mathfrak{a}, \{ l_k \}_{k=1}^\infty)$. Hence, by Getzler \cite{getzler}, one can construct a new $L_\infty$-algebra structure on the same underlying graded vector space $\mathfrak{a}$ twisted by the Maurer-Cartan element $r$. The structure operations are precisely the ones given in the statement.

    For the second part, we observe that
    \begin{align}
        &l_1 (r+ r') + \frac{1}{2!} l_2 (r+r', r+r') + \frac{1}{3!} l_3(r+r' , r+r', r+r') \nonumber \\
        &= \underbrace{ l_1 (r) + \frac{1}{2!} l_2 (r,r) + \frac{1}{3!} l_3 (r,r,r)}_{= 0} + \big\{  l_1 (r') + l_2 (r,r') + \frac{1}{2} l_3 (r,r,r') \nonumber \\
        & \qquad \qquad \qquad  + \frac{1}{2} l_2 (r',r') + \frac{1}{2} l_3 (r,r',r') + \frac{1}{3!} l_3 (r',r',r') \big\} \nonumber \\
        &= l_1^r (r') + \frac{1}{2} l_2^r (r', r') + \frac{1}{3!} l_3^r (r',r',r'). \label{mc-deform}
    \end{align}
    This shows that $r+r'$ is a deformation map if and only if the expression given in (\ref{mc-deform}) vanishes, i.e., $r'$ is a Maurer-Cartan element of $(\mathfrak{a}, \{ l_k^r \}_{k=1}^\infty)$.
\end{proof}

The above result shows that the $L_\infty$-algebra $(\mathfrak{a} = C^{\bullet +1} (\mathfrak{h}, \mathfrak{g}), \{ l_k^r \}_{k=1}^\infty)$ governs the linear deformations of the operator $r$. For this reason, this $L_\infty$-algebra is said to be the {\em governing algebra} of the deformation map $r$. On the other hand, since $(\mathfrak{a} = C^{\bullet +1} (\mathfrak{h}, \mathfrak{g}), \{ l_k^r \}_{k=1}^\infty)$ is an $L_\infty$-algebra, it follows that the degree $1$ map $l_1^r : \mathfrak{a} \rightarrow \mathfrak{a}$ satisfies $(l_1^r)^2 = 0$. Hence, one may associate a cochain complex
\begin{align}\label{last-complex}
    C^1 (\mathfrak{h}, \mathfrak{g}) \xrightarrow{l_1^r} C^2 (\mathfrak{h}, \mathfrak{g}) \xrightarrow{l_1^r}  \cdots \cdots \xrightarrow{l_1^r} C^n (\mathfrak{h}, \mathfrak{g}) \xrightarrow{l_1^r} C^{n+1} (\mathfrak{h}, \mathfrak{g}) \xrightarrow{l_1^r} \cdots.
\end{align}
Hence, for any $n \geq 2$, the $n$-th cohomology group $H^n_r (\mathfrak{h}, \mathfrak{g})$ of the deformation map $r$ (defined in Subsection \ref{subsection-51}) is isomorphic to the $n$-th cohomology group of the cochain complex given in (\ref{last-complex}).


\medskip

  \noindent {\bf Acknowledgements.}  The author would like to thank the Department of Mathematics, IIT Kharagpur, for providing a beautiful academic atmosphere where the research has been done.
    
\medskip

\noindent {\bf Conflict of interest statement.} There is no conflict of interest.

\medskip

\noindent {\bf Data Availability Statement.} Data sharing does not apply to this article as no new data were created or analyzed in this study.


\begin{thebibliography}{BFGM03}

\bibitem{agore-ass} A. L. Agore, Classifying complements for associative algebras, {\em Linear Algebra Appl.} 446 (2014), 345-355.

\bibitem{agore-lie} A. L. Agore and G. Militaru, Classifying complements for Hopf algebras and Lie algebras, {\em J. Algebra} 391 (2013), 193-208.

\bibitem{agore-leibniz} A. L. Agore and G. Militaru, Unified products for Leibniz algebras. Applications, {\em Linear Algebra Appl.} 439 (2013), 2609-2633.

\bibitem{benito0} P. Benito, C. Draper and A. Elduque, Lie-Yamaguti algebras related to $\mathfrak{g}_2$, {\em J. Pure Appl. Algebra} 202 (2005), 22-54.

\bibitem{benito1} P. Benito, A. Elduque and F. Mart\'{i}n-Herce, Irreducible Lie-Yamaguti algebras, {\em J. Pure Appl. Algebra} 213 (2009), 795-808.

\bibitem{das-ms} A. Das, Associative-Yamaguti algebras, arXiv:2509.03648 [math.RA].

\bibitem{getzler} E. Getzler, Lie theory of nilpotent $L_\infty$-algebras, {\em Ann. Math. (2)} 170 (2009), 271-301.

\bibitem{goswami} S. Goswami, S. K. Mishra and G. Mukherjee, Automorphisms of extensions of Lie-Yamaguti algebras and inducibility problem, {\em J. Algebra} 641 (2024), 268-306.

\bibitem{jacobson} N. Jacobson, Lie and Jordan triple systems, {\em Amer. J. Math.} 71 (1949), 149-170.

\bibitem{jiang-sheng-tang} J. Jiang, Y. Sheng and R. Tang, Deformation maps of quasi-twilled Lie algebras, arXiv:2405.02532 [math-ph].

\bibitem{kikkawa} M. Kikkawa, Remarks on solvability of Lie triple algebras, {\em Mem. Fac. Sci. Shimane Univ.} 13
(1979), 17–22.

\bibitem{kinyon} M. K. Kinyon and A. Weinstein, Leibniz algebras, Courant algebroids, and multiplications on reductive homogeneous spaces, {\em Amer. J. Matth.} 123 (2001), 525-550.

\bibitem{lister} W. G. Lister, A structure theory of Lie triple systems, {\em Trans. Amer. Math. Soc.} 72, no. 2 (1952), 217-242.

\bibitem{majid} S. Majid, Physics for algebraists: non-commutative and non-cocommutative Hopf algebras by a bicrossproduct construction, {\em J. Algebra} 130 (1990), 17-64.

\bibitem{meyberg} K. Meyberg, Lectures on algebras and triple systems, Lecture notes, Charlottesville VA, University of Virginia (1972).

\bibitem{mondal-saha} B. Mondal and R. Saha, Deformation cohomology of morphisms of Lie-Yamaguti algebras, {\em J. Algebra} 658 (2024), 660-685.

\bibitem{nomizu} K. Nomizu, Invariant affine connections on homogeneous spaces, {\em Amer. J. Math.} 76 (1954), 33-65.

\bibitem{sagle} A. A. Sagle, On simple algebras obtained from homogeneous general Lie triple systems, {\em Pacific J. Math.} 15 (1965), 1397–1400.

\bibitem{sagle2} A. A. Sagle, A note on simple anti-commutative algebras obtained from reductive homogeneous spaces, {\em Nagoya Math. J.} 31 (1968), 105-124.

\bibitem{sheng-zhao} Y. Sheng and J. Zhao, Relative Rota-Baxter operators and symplectic structures on Lie-Yamaguti algebras, {\em Comm. Algebra} 50 (2022), 4056-4073.

\bibitem{smirnov} O. Smirnov, Imbedding of Lie triple systems into Lie algebras, {\em J. Algebra} 341 (2011), 1-12.

\bibitem{sun-chen} Q. Sun and S. Chen, Representations and cohomologies of differential Lie-Yamaguti algebras with any weights, {\em J. Lie Theory} 33 (2023), No. 2, 641-662.

\bibitem{takahashi} N. Takahashi, Representations of Lie-Yamaguti algebras with semisimple enveloping Lie algebras, {\em J. Algebra} 664 (2025), 452-483.

\bibitem{voro} Th. Voronov, Higher derived brackets and homotopy algebras, {\em J. Pure Appl. Algebra} 202 (2005), 133-153.

\bibitem{yamaguti0} K. Yamaguti, On the Lie triple system and its generalization, {\em  J. Sci. Hiroshima Univ. Ser. A} 21, no. 3 (1958), 155-160.

\bibitem{yamaguti} K. Yamaguti, On cohomology groups of general Lie triple systems, {\em Kumamoto J. Sci. Ser. A} 8, no. 4 (1969), 135-146.

\bibitem{zhang-li} T. Zhang and J. Li, Deformations and extensions of Lie-Yamaguti algebras, {\em Linear Multilin. Algebra} 63, no. 11 (2015), 2212-2231.

\bibitem{zhao-qiao} J. Zhao and Y. Qiao, Maurer-Cartan characterization, $L_\infty$-algebras, and cohomology of relative Rota-Baxter operators on Lie-Yamaguti algebras, arXiv:2310.05360 [math.RA].

\bibitem{zhao-qiao2} J. Zhao and Y. Qiao, The classical Lie-Yamaguti Yang-Baxter equation and Lie-Yamaguti bialgebras, {\em Sci. Sin Math.} 53 (2023), 1303-1324.

\bibitem{zhao-xu-qiao} J. Zhao, S. Xu and Y. Qiao, Post Lie-Yamaguti algebras, relative Rota-Baxter operators of nonzero weights, and their deformations, arXiv:2304.06324 [math.RA].
\end{thebibliography}
\end{document}